\theoremstyle{plain}
\newtheorem{thm}{Theorem}[section]
\newtheorem{theorem}[thm]{Theorem}
\newtheorem{lemma}[thm]{Lemma}
\newtheorem{corollary}[thm]{Corollary}
\newtheorem{proposition}[thm]{Proposition}
\theoremstyle{definition}
\newtheorem{remark}[thm]{Remark}
\newtheorem{notation}[thm]{Notation}
\newtheorem{definition}[thm]{Definition}
\newtheorem{example}[thm]{Example}
\newtheorem{question}[thm]{Question}
\numberwithin{equation}{section}
\newcommand{\sC}{{\mathcal C}}
\newcommand{\sF}{{\mathcal F}}
\newcommand{\sI}{{\mathcal I}}
\newcommand{\sK}{{\mathcal K}}
\newcommand{\sO}{{\mathcal O}}
\newcommand{\sP}{{\mathcal P}}
\newcommand{\sT}{{\mathcal T}}
\newcommand{\sV}{{\mathcal V}}
\newcommand{\C}{{\mathbb C}}
\newcommand{\BP}{{\mathbb P}}
\newcommand{\BS}{{\mathbb S}}
\newcommand{\fg}{{\mathfrak g}}
\newcommand{\fp}{{\mathfrak p}}
\newcommand{\aut}{{\mathfrak a}{\mathfrak u}{\mathfrak t}}
\def\Hom{\mathop{\rm Hom}\nolimits}
\title[Isotrivial VMRT-structures]{Isotrivial VMRT-structures of complete intersection type}
\author{Baohua Fu and Jun-Muk Hwang}
\thanks{Baohua Fu is supported by National Natural Science Foundation of China (11321101 and 11688101). He would like to thank KIAS and Max Planck Institute at Bonn for the hospitality. Jun-Muk Hwang is
supported by National Researcher Program 2010-0020413 of NRF}
\begin{document}
\maketitle

\begin{center}
{\em Dedicated to Ngaiming Mok on his sixtieth birthday}
\end{center}

\begin{abstract}
The family of varieties of minimal rational tangents on a quasi-homogeneous projective manifold is isotrivial. Conversely, are projective manifolds with isotrivial varieties of minimal rational tangents quasi-homogenous?  We will show that this is not true in general, even when the projective manifold has Picard number 1. In fact, an isotrivial family of varieties of minimal rational tangents needs not be locally flat in differential geometric sense. This leads to the question for which projective variety $Z$, the $Z$-isotriviality of varieties of minimal rational tangents implies local flatness. Our main result verifies this for many cases of $Z$ among complete intersections.
\end{abstract}

\section{Introduction}
We work in the complex analytic category. Recall that a projective variety is quasi-homogeneous  if the action of its automorphism group has a dense open orbit.
It is an intriguing question {\em how to recognize quasi-homogeneous varieties when the varieties are  a priori defined without apparent relations with any group actions}.
For uniruled projective manifolds,   the method of VMRT-structures introduced by Ngaiming Mok and the second author  (see \cite{Hw01} and \cite{HM99} for introductory surveys) can be applied to study this question.    Recall that given a uniruled projective manifold $X$, if we pick a family $\sK$ of minimal rational curves on $X$, we have its VMRT-structure  $\sC \subset \BP T(X)$, the subvariety defined as the closure of the union of tangent directions of members of $\sK$ through general points of $X$.  How do we use this to prove a variety is homogeneous or quasi-homogeneous? Quite often, the essential point lies in proving the VMRT-structure is locally homogeneous in the following sense.

\begin{definition}\label{d.vmrt}
 Let $X$ be a uniruled projective manifold and let $\sC \subset \BP T(X)$ be a VMRT-structure determined by a family $\sK$ of minimal rational curves.  Given a holomorphic vector field $v$ on an open subset $U\subset X$, we have the induced vector field $v^{\sharp}$ on $\BP T(U)$ obtained by the action of the local 1-parameter family of biholomorphisms generated by $v$. We say that $v$ {\em is a } $\sC$-{\em preserving vector field}, if $v^{\sharp}$ is tangent to $\sC|_U \subset \BP T(U)$.
 The VMRT-structure $\sC \subset \BP T(M)$ is said to be \begin{itemize} \item[(1)] {\em locally homogeneous} if $\sC$-preserving vector fields on some open subset $U \subset X$
 span the tangent space $T_x(U)$ at some point $x \in U$; and \item[(2)] {\em locally flat} if there are holomorphic coordinates $(x^1, \ldots, x^n)$ on a non-empty open subset $U \subset X$ such that the coordinate vector fields
  $\frac{\partial}{\partial x^1}, \ldots,  \frac{\partial}{\partial x^n}$ are $\sC$-preserving. \end{itemize} \end{definition}

These  local differential geometric properties of $\sC$  can be used to check that  certain varieties are  quasi-homogeneous, by the following results from Main Theorem in \cite{HM01} and Proposition 6.13 in \cite{FH}.

\begin{theorem}\label{t.CF}
Let $X$ be a Fano manifold of Picard number 1 and let $\sC \subset \BP T(X)$ be a VMRT-structure such that a general fiber $\sC_x \subset \BP T_x(X)$ is nonsingular and irreducible. If the VMRT-structure is locally homogeneous, then $X$ is quasi-homogeneous.
If the VMRT-structure is locally flat, then $X$ is an equivariant compactification of
the vector group $\C^n, n= \dim X$. \end{theorem}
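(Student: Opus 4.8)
The plan is to reduce both assertions to the Cartan--Fubini type extension principle for Fano manifolds of Picard number $1$, the Main Theorem of \cite{HM01}, whose nondegeneracy hypotheses are supplied by the assumption that a general fiber $\sC_x\subset\BP T_x(X)$ is nonsingular and irreducible (the linear degenerate case $X=\BP^n$, for which both conclusions are evident, being treated separately). The guiding idea is that every local $\sC$-preserving vector field ought to globalize to an honest holomorphic vector field on $X$, that is, to an element of $\mathrm{Lie}(\Aut(X))$, which is $H^0(X,T_X)$. Granting this, both statements become elementary consequences of the structure of the automorphism group.

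First I would globalize a single $\sC$-preserving field $v$ defined on an open set $U$. For small $t\in\C$ it generates a holomorphic local $1$-parameter family of biholomorphisms $\phi_t$, and since $v^\sharp$ is tangent to $\sC$, each $\phi_t$ carries $\sC$ to $\sC$ wherever it is defined. This is precisely the input of the extension theorem, so each $\phi_t$ extends to a global automorphism $\Phi_t\in\Aut(X)$. Two automorphisms of the irreducible variety $X$ that agree on an open set coincide; hence the relations $\phi_s\circ\phi_t=\phi_{s+t}$ propagate to the extensions, yielding a local, and therefore global, holomorphic $1$-parameter subgroup of $\Aut(X)$, the passage to a global subgroup being legitimate because $\Aut(X)$ is a linear algebraic group. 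Its infinitesimal generator is a global field $V\in H^0(X,T_X)$ with $V|_U=v$.

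Granting this globalization, the first statement is immediate. Local homogeneity provides $\sC$-preserving fields $v_1,\dots,v_k$ on some $U$ whose values span $T_x(U)$ at a point $x$; their global extensions $V_1,\dots,V_k$ then span $T_x(X)$, and since these lie in $\mathrm{Lie}(\Aut(X))$, the orbit of $x$ is open, so $X$ is quasi-homogeneous. For the second statement, local flatness lets me take the $v_i$ to be commuting coordinate fields $\partial/\partial x^i$, which span the tangent space at every point of $U$. Then $[V_i,V_j]$ is a global field restricting to $[\partial/\partial x^i,\partial/\partial x^j]=0$ on $U$, and hence vanishes identically by the identity principle; thus $V_1,\dots,V_n$ generate a homomorphism from the vector group $\C^n$ into $\Aut(X)$. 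Its differential at $x$ is an isomorphism onto $T_x(X)$, so the orbit through $x$ is open; the stabilizer of $x$, being a Zariski-closed and hence linear subgroup of $\C^n$ that is simultaneously discrete, is trivial. The open orbit is therefore equivariantly isomorphic to $\C^n$, which exhibits $X$ as an equivariant compactification of the vector group.

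The crux, and by far the hardest step, is the Cartan--Fubini extension itself, which I import from \cite{HM01} together with the refinement recorded in \cite{FH}. Its difficulty is that a VMRT-preserving map is prescribed only on a small open set, and to spread it across $X$ one must propagate it along chains of minimal rational curves using their deformation theory, while the matching of VMRTs forces this propagation to be single-valued and biholomorphic; the nonsingularity and irreducibility of $\sC_x$ are exactly what provide the rigidity this requires. Everything downstream of this input is formal.
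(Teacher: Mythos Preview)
Your approach is correct and is essentially the argument underlying the cited references: the paper does not give its own proof of this theorem but attributes it to the Main Theorem of \cite{HM01} (Cartan--Fubini extension) and Proposition~6.13 of \cite{FH}. Your reduction---globalizing local $\sC$-preserving vector fields via the extension theorem, then reading off quasi-homogeneity or the $\C^n$-action from the resulting elements of $H^0(X,T_X)$---is exactly the mechanism those references implement, so there is nothing to compare.
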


This naturally leads to the question:
how do we check that a VMRT-structure is locally homogeneous?  Roughly speaking, this question is divided into two parts: the isotriviality and the vanishing of curvature.  The first part is to check that the natural projection $ \sC \to X$ is isotrivial as a family of projective varieties. More precisely, we will use the following terminology.

\begin{definition}\label{d.isotrivial}
Let $Z \subset \BP V$ be a submanifold of the projective space $\BP V$ where $V$ is a complex vector space with
$\dim V = \dim X$. A VMRT-structure $\sC \subset \BP T(X)$ is {\em isotrivial of type} $Z \subset \BP V$ (or equivalently, $Z$-{\em isotrivial}), if the fiber
$\sC_x \subset \BP T_x(X)$ over a general point $x \in X$  is isomorphic to $Z \subset \BP V$ by a linear isomorphism $T_x(X) \cong V$.
\end{definition}

The isotriviality is an obvious necessary condition for $\sC \subset \BP T(X)$ to be locally homogeneous.
Once the isotriviality is established, we can view $\sC \subset \BP T(X)$ restricted to a suitable Euclidean open subset $U \subset X$ as a kind of Cartan geometry and the question is reduced
to check that this Cartan geometry has vanishing curvature. In this paper, we will concentrate on this second part of the problem, where the main question is the following.  Suppose that we have chosen a nonsingular projective variety
$Z \subset \BP V$ which is  nondegenerate (i.e. does not lie on a hyperplane in $\BP V$).

\begin{question}\label{q.Question}
Let $\sC \subset \BP T(X)$ be a VMRT-structure on a uniruled projective manifold $X$. Assume that $\sC$ is isotrivial of type $Z \subset \BP V$.
Is $\sC$ locally homogenous or locally flat? \end{question}

The answer depends on $Z$. Affirmative answers are known in the following cases.
\begin{example}[\cite{M08}] \label{ex1} When $Z \subset \BP V$ is a homogeneous variety arising as
the VMRT of an irreducible Hermitian symmetric space, a $Z$-isotrivial VMRT-structure is locally flat.
\end{example} \begin{example}[\cite{Hw10}]\label{ex2} When $Z \subset \BP V$ is a nonsingular hypersurface of degree $\geq 4$, a $Z$-isotrivial VMRT-structure is locally flat.
\end{example}

We will give in Section \ref{s.example} some examples with negative answer to Question \ref{q.Question}, where $Z$ is given by linear sections of homogeneous varieties. In these examples, the variety $Z$ has continuous automorphisms. When $Z$ has no continuous automorphism,  no negative example to Question \ref{q.Question} is known, and in this case, a method to study Question \ref{q.Question} has been developed in \cite{Hw10} via Cartan's coframe formalism (this will be reviewed in
Section \ref{s.coframe}). The result of Example \ref{ex2} has been obtained by this approach.
The goal of this paper is to push this method further to cover some other smooth complete intersections. Our main result is the following.

\begin{theorem} \label{t.Main}
Let $Z \subset \BP V$ be a smooth non-degenerate complete intersection. Let us denote its multi-degree
by $[m_1, \ldots, m_c],$ where $ c$ is the codimension of $Z.$
Assume further that $Z$ satisfies one of the following:

(i) $Z$ is a  curve  of multi-degree different from $[3]$, $[4]$, $[2,2]$, $[2,3]$ or $[2,2,2]$;

(ii) $Z \subset \BP V$ is covered by lines, with multi-degree different from $[2]$, $[3]$, or $[2,2]$;

(iii) $Z$ is contained in a smooth hypersurface of degree $d \geq 3$ and its multi-degree
 $[m_1, \cdots, m_c]$ satisfies $d= m_1 < d+2 \leq m_2 \leq \cdots \leq m_c$.

Then any VMRT-structure on a uniruled projective manifold which is isotrivial of type $Z \subset \BP V$ is locally flat.
\end{theorem}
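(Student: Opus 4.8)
The plan is to run the Cartan-theoretic machine of \cite{Hw10}, for which the decisive input is an algebraic computation on the model $Z \subset \BP V$. Write $\hat Z \subset V$ for the affine cone, let $\fg_0 = \aut(\hat Z) \subset \fgl(V)$ be the Lie algebra of its linear infinitesimal automorphisms, set $\fg_{-1} = V$, and let $\fg_1 = \fg_0^{(1)} = \{T \in \Hom(V, \fg_0) : T(u)v = T(v)u\}$ be the first prolongation. The flat model attached to $Z$ is the vector group $V$ carrying the constant VMRT $V \times Z \subset \BP T(V)$, whose symmetry algebra is the Tanaka prolongation $\bigoplus_{k \geq -1} \fg_k$. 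First I would show that under each of (i)--(iii) this algebra is as small as possible: $\fg_0$ is computed explicitly off the defining equations, and, crucially, $\fg_1 = 0$. The vanishing of the first prolongation is what places the structure in finite type and makes the coframe formalism applicable. For a curve or a variety covered by lines one determines $\fg_0$ by a Matsumura--Monsky type argument, and the excluded multi-degrees $[3], [4], [2,2], [2,3], [2,2,2]$ (resp.\ $[2], [3], [2,2]$) are exactly those for which $Z$ acquires continuous automorphisms (elliptic curves, quadrics, and the like) or a nonzero prolongation; in case (iii) the gap condition $m_1 = d < d+2 \leq m_2 \leq \cdots \leq m_c$ forces any infinitesimal automorphism to preserve the unique lowest-degree relation and then to act by scalars.

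Granting $\fg_1 = 0$, the second step is to invoke the coframe formalism: the $Z$-isotriviality yields a reduction of the coframe bundle over a Euclidean open $U \subset X$ to the structure group $G_0$ with $\mathrm{Lie}(G_0) = \fg_0$, and the family $\sK$ of minimal rational curves supplies the soldering together with a distinguished connection, giving structure equations of the shape $d\omega = -\theta \wedge \omega + \tau$ and $d\theta = -\theta \wedge \theta + K$, where $\omega$ is the $V$-valued tautological form, $\theta$ the $\fg_0$-valued connection form, $\tau$ the torsion, and $K$ the curvature. Because $\fg_1 = 0$ there is no freedom to alter the connection by a section of $V^* \otimes \fg_1$, so the pair $(\tau, K)$ is a genuine tensorial invariant of the VMRT-structure, and local flatness in the sense of Definition \ref{d.vmrt}(2) is equivalent to the identical vanishing of this invariant on $U$.

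The heart of the argument is to kill $(\tau, K)$ by prolongation. Differentiating the structure equations and using $d^2 = 0$ produces Bianchi identities that constrain $(\tau, K)$ to be a cocycle for the Spencer complex of $\fg_0$, namely for the differential $\partial \colon \wedge^\bullet V^* \otimes \fg_k \to \wedge^{\bullet+1} V^* \otimes \fg_{k-1}$; the torsion represents a class in the lowest group $H^{\bullet,2}(\fg_0)$ and the curvature in the next one. I would then compute these Spencer cohomology groups for $Z \subset \BP V$ and show that they vanish under (i)--(iii). Feeding the vanishing back into the Bianchi identities forces first $\tau \equiv 0$ and then $K \equiv 0$ on $U$, which is precisely the assertion of the theorem.

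The main obstacle is the cohomological computation of the last step, together with the prolongation vanishing of the first. Translating the Spencer groups $H^{k,2}(\fg_0)$ into the projective geometry of $Z \subset \BP V$ --- its second fundamental form, the normal bundle, and the Jacobian ideal of the defining equations --- is delicate, and it is here that the numerical hypotheses enter, since the excluded multi-degrees are exactly the borderline values at which some $H^{k,2}$ fails to vanish or at which $\fg_1 \neq 0$. Consequently the argument must be organized case by case: (i) the curve case by an explicit genus-and-degree analysis, (ii) the line-covered case by using the lines to linearize part of the symbol, and (iii) the gap case by exploiting $m_2 \geq d+2$ to decouple the contribution of the ambient degree-$d$ hypersurface from that of the remaining equations.
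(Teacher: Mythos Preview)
Your overall instinct is right --- the proof does run a Cartan coframe machine and the decisive input is a cohomological computation on $Z \subset \BP V$ --- but the specific mechanism you describe has a genuine gap. For every non-quadric complete intersection one has $\fg_0 = \aut(\widehat{Z}) = \C \cdot \mathrm{Id}_V$ (Lemma~\ref{l.aut}), whence $\fg_1 = 0$ trivially; but then the Spencer group in which the intrinsic torsion lives is $\Hom(\wedge^2 V, V)/\partial(V^{\vee}\otimes\fg_0) \cong \Hom(\wedge^2 V, V)/V^{\vee}$, which is enormous, not zero. The Spencer complex depends only on $\fg_0$, not on $Z$, so it cannot distinguish a cubic from a quintic, and the Bianchi identity alone will never kill the torsion. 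What the argument actually uses is an \emph{extra} constraint coming from the fact that the connection furnished by the minimal rational curves is a \emph{characteristic} connection (Definition~\ref{d.characteristic}, Proposition~\ref{p.6.1}): this forces the structure function of an adapted coframe to land in the $Z$-dependent subspace $\Xi_Z = \{\sigma \in \Hom(\wedge^2 V, V) : \sigma(u,v) \in T_u(\widehat{Z}) \text{ for all } u \in \widehat{Z},\ v \in T_u(\widehat{Z})\}$ (Theorem~\ref{t.connection}), and the algebraic core of the proof is the equality $\Xi_Z = \Xi_V \cong V^{\vee}$ (Theorem~\ref{t.Main2}), established via an injection $\Xi_Z \hookrightarrow H^0(Z, T_Z \otimes \Omega_Z(1))$ together with a dimension count on the target. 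Your proposal mentions that $\sK$ supplies a distinguished connection but never isolates the characteristic property or the space $\Xi_Z$, so the step that actually cuts the torsion down is absent.

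Your diagnosis of the excluded multi-degrees is also off. They are \emph{not} the cases where $\fg_0$ exceeds $\C$ or where $\fg_1 \neq 0$: a plane cubic or quartic still has $\aut(\widehat{Z}) = \C$ and $\fg_1 = 0$. They are precisely the cases with $H^0(Z, T_Z(1)) \neq 0$ (Theorem~\ref{t.cohomology}(II)); this vanishing is what guarantees that the conic connection on $\sC$ is unique (Proposition~\ref{p.5.2}), so that \emph{any} adapted coframe automatically has the characteristic connection as its geodesic connection and the $\Xi_Z$ constraint applies without further normalization. Likewise in case (iii), the gap $m_i \geq d+2$ is not used to control $\fg_0$ but to rule out nonzero maps $N_{Z/Y} \to N_{Y/\BP V}(1)|_Z$ in the proof that $\Xi_Z = \Xi_V$.
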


In the proof, we need the vanishing $H^0(Z, T_Z(1)) =0$  to
relate (see Proposition \ref{p.5.2} and Corollary \ref{c.admissible}) the minimal rational curves to the natural Cartan coframe of the structure via the method of \cite{Hw10}. This vanishing does not hold for curves of multi-degrees $$[2], [3], [4], [2,2], [2,3], [2,2,2]$$ and for varieties
of dimension $\geq 2$ with multi-degree $[2]$, $[3]$ or $[2,2]$.
The case [2] of quadric hypersurfaces is covered by Mok's result in Example \ref{ex1}.
The case [3] of cubic hypersurfaces is also settled in \cite{Hw13} if $\dim V \geq 4,$ by using ideas different  from the method of \cite{Hw10}.
The restriction on the multi-degree $m_i$'s in (iii)
  is needed to get $\dim H^0(Z, TZ \otimes \Omega_Z \otimes \sO_Z(1)) = \dim V$.  It is likely that the result of Theorem \ref{t.Main}
still holds for other cases. But one has to come up with some new ideas to handle these cases.

  Theorem \ref{t.Main} is proved by checking certain cohomological conditions for complete intersections (Theorem \ref{t.cohomology} and Theorem \ref{t.Main2}).
  While some of these must have been known to the experts, some of them (for example, Theorem \ref{t.cohomology} (III) ,(IV) and
  Theorem \ref{t.Main2})  seem to be new and should be interesting as  purely algebro-geometric results on complete intersections.

\section{Coframes adapted to characteristic connections}\label{s.coframe}
Most of the results in this section are contained in \cite{Hw10}. But many of them
 are not explicitly stated in \cite{Hw10}, buried
   in the proofs of some propositions.
   For the reader's convenience, we will reproduce them here. Since it does not make sense to
   repeat all the arguments given in \cite{Hw10}, we take this opportunity to give an alternative presentation,
    using explicit computations with  respect to a chosen basis.  This presentation  should be more friendly to readers with background
     in differential geometry. People preferring a basis-free, invariant  approach should look at the presentation in \cite{Hw10}.

\begin{notation}\label{n} For a complex manifold $M$, we will write $T(M)$ to denote the tangent bundle of $M$,
 but sometimes use $T_M $ to simplify the notation.   Let $V$ be a $\C$-vector space and let $\BP V$ be its projectivization,
i.e., the set of 1-dimensional subspaces of $V$. Given a projective submanifold $Z \subset \BP V$,
 the
affine cone of $Z$ will be denoted by $\widehat{Z} \subset V$. For
a point $\alpha \in Z$, the affine tangent space of $Z$ at
$\alpha$ is
$$T_{\alpha}(\widehat{Z}):= T_u(\widehat{Z}) \subset V \mbox{ for a
non-zero vector } u \in \widehat{\alpha}.$$ This is independent of the
choice of $u$. There is a canonical identification $T_{\alpha}(Z)
= \Hom(\widehat{\alpha}, T_{\alpha}(\widehat{Z})/\widehat{\alpha}).$
\end{notation}

\begin{definition}\label{d.cone}  Let $M$ be a complex manifold.  A {\em smooth cone structure} on $M$ is
a submanifold $\sC \subset \BP T(M)$ such that  the projection $\varpi: \sC \to M$ is a smooth morphism
with irreducible fibers.
For each point $x \in M$, the fiber $\varpi^{-1}(x)$ will
be denoted by $\sC_x $ and the union of the affine cones
$\widehat{\sC}_x$ will be denoted by $\widehat{\sC} \subset T(M)$.
For a point  $\alpha \in \sC$,
denote by ${\rm d} \varpi_{\alpha}: T_{\alpha}(\sC) \to T_x(M), x=
\varpi(\alpha),$ the differential of $\varpi$ at $\alpha$. We have
three subspaces of $T_{\alpha}(\sC)$ defined by
$$\sV_{\alpha} := {\rm d}
\varpi_{\alpha}^{-1}(0), \;  \sT_{\alpha} := {\rm d}
\varpi_{\alpha}^{-1}(\widehat{\alpha}), \;  \sP_{\alpha} := {\rm d}
\varpi_{\alpha}^{-1}(T_{\alpha}(\widehat{\sC}_x)).$$ This gives three
vector subbundles  $\sV \subset \sT \subset \sP$ of  $T(\sC),$
i.e., three natural distributions on $\sC$. The distribution $\sV
\subset T(\sC)$ is integrable and coincides with the relative
tangent bundle of $\varpi$.
\end{definition}

\begin{definition}\label{d.connection} For a smooth cone structure $\sC \subset \BP T(M)$,
 a  line subbundle $\sF \subset T(\sC)$ is called a {\em
conic connection}  if $\sF \subset \sT$ and $\sF \cap \sV =0$, i.e., it
splits the exact sequence \begin{equation} \label{split} 0
\longrightarrow \sV \longrightarrow \sT \longrightarrow \sT/\sV
\cong \sO(-1)|_{\sC} \longrightarrow 0\end{equation} where $\sO(-1)$
denotes the relative tautological line bundle on $\BP T(M)$, in other words, it is the line
subbundle of $\varpi^* T(M)$,
the fiber of which at $\alpha \in \BP T(M)$ is given by $\sO(-1)_{\alpha} = \widehat{\alpha}.$
\end{definition}

The following is proved in Proposition 1 of \cite{HM04}.

\begin{proposition}\label{p.sP} In Definition \ref{d.connection}, regarding the subbundles  of
$T(\sC)$ as sheaves of vector fields on $\sC$, we have $\sP =
[\sF, \sV]$. In particular, the bracket operation on vector fields belonging to $\sF$ and $\sV$
induces an isomorphism of vector bundles $$
\Upsilon: \sF \otimes \sV  \ \to \ \sP/\sT.$$
\end{proposition}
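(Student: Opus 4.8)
The plan is to verify the statement by a local frame computation adapted to the smooth morphism $\varpi\colon \sC\to M$. Fix a point $\alpha_0\in\sC$, set $x_0=\varpi(\alpha_0)$ and $r=\dim\sC_{x_0}$. By the normal form for submersions I would choose coordinates $(x^1,\dots,x^n,s^1,\dots,s^r)$ on a neighbourhood of $\alpha_0$ in $\sC$ with $\varpi(x,s)=x$, writing $\partial_{x^j},\partial_{s^k}$ for the coordinate vector fields; then $\sV=\langle\partial_{s^1},\dots,\partial_{s^r}\rangle$. Trivialising $T(M)$ by the coordinate frame, I would pick a nowhere-vanishing local section $u\colon\sC\to T(M)$ of $\sO(-1)|_{\sC}$, so that $u(x,s)=\sum_j u^j(x,s)\,\partial_{x^j}$ spans the line $\widehat{\alpha}$ over $\alpha=(x,s)$. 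Parametrising the fibre cone by $(t,s)\mapsto t\,u(x,s)$ then gives $T_\alpha(\widehat{\sC}_x)=\langle u,\partial_{s^1}u,\dots,\partial_{s^r}u\rangle\subset T_x(M)$, and reading off ${\rm d}\varpi$ shows that a field $\sum_j a^j\partial_{x^j}+\sum_k b^k\partial_{s^k}$ lies in $\sT$ iff $(a^j)\in\C u$ and in $\sP$ iff $(a^j)\in T_\alpha(\widehat{\sC}_x)$; in particular $\sT=\sF\oplus\sV$ has rank $r+1$ and $\sP$ has rank $2r+1$.

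Normalising a local generator of the conic connection as $f=\sum_j u^j\,\partial_{x^j}+\sum_k c^k\,\partial_{s^k}$ (possible since ${\rm d}\varpi(\sF)\subset\widehat\alpha$ and $\sF\cap\sV=0$), the computation at the heart of the proof is
\begin{equation*}
[f,\partial_{s^\ell}] = -\sum_j (\partial_{s^\ell} u^j)\,\partial_{x^j}-\sum_k(\partial_{s^\ell}c^k)\,\partial_{s^k},\qquad \ell=1,\dots,r,
\end{equation*}
whose image under ${\rm d}\varpi$ is $-\partial_{s^\ell}u\in T_\alpha(\widehat{\sC}_x)$. This already shows each bracket lies in $\sP$, and that modulo $\sT$ it represents the class of $-\partial_{s^\ell}u$ in $\sP/\sT\cong T_\alpha(\widehat{\sC}_x)/\widehat\alpha$. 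Since the latter is freely generated by the classes of $\partial_{s^1}u,\dots,\partial_{s^r}u$, the brackets $[\sF,\sV]$ together with $\sT=\sF+\sV$ generate exactly $\sP$, which is the asserted identity $\sP=[\sF,\sV]$.

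For the isomorphism $\Upsilon$ I would verify that $(f,w)\mapsto[f,w]\bmod\sT$ is $\sO_\sC$-bilinear: for a function $g$ one has $[f,gw]=g[f,w]+(fg)w$ with $(fg)w\in\sV\subset\sT$, and $[gf,w]=g[f,w]-(wg)f$ with $(wg)f\in\sF\subset\sT$, so both correction terms die modulo $\sT$. Hence the bracket descends to a bundle map $\Upsilon\colon\sF\otimes\sV\to\sP/\sT$, surjective by the previous paragraph; as both source and target are locally free of rank $r$, surjectivity forces $\Upsilon$ to be an isomorphism.

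The only delicate input — the hard part, such as it is — is the non-degeneracy needed for $\sP/\sT$ to have rank exactly $r$, namely that $u,\partial_{s^1}u,\dots,\partial_{s^r}u$ are linearly independent at every point of $\sC$, equivalently that the cone parametrisation $(t,s)\mapsto t\,u(x,s)$ is an immersion off the zero section. This I would deduce from the hypothesis that $\varpi$ is a smooth morphism: its fibres $\sC_x$ are then smooth, so the affine cones $\widehat{\sC}_x$ are smooth away from the origin and $\dim T_\alpha(\widehat{\sC}_x)=r+1$ holds throughout. Consequently all the rank counts hold identically rather than merely at a general point, and $\Upsilon$ is a genuine isomorphism of vector bundles.
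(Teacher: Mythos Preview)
Your argument is correct. The paper does not actually prove this proposition in-text but simply cites Proposition 1 of \cite{HM04}; your local-coordinate computation is the natural self-contained verification, and the rank checks and bilinearity-mod-$\sT$ argument are handled cleanly.
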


\begin{definition}\label{d.characteristic} A conic connection $\sF \subset T(\sC)$ is a {\em
characteristic connection} if  for any local section $v$ of $\sP$
and any local section $w$ of $\sF$, both regarded as local vector
fields on the manifold $\sC$, the Lie bracket $[v, w]$ is a local
section of $\sP$ again. \end{definition}

When  $\sC = \BP T(M)$, any connection is a characteristic
connection.  This is exceptional, as we have the following proposition, which follows from
Theorem 3.1.4 of \cite{HM99} because an irreducible nonsingular  projective variety with degenerate Gauss map must be a linear
subspace.

\begin{proposition}\label{p.unique} If $\sC_x \subset \BP T_x(M)$ is not a linear subspace
for a general $x \in M$, then  a characteristic connection is
unique if it exists.
\end{proposition}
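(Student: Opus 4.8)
The plan is to prove that the difference of two characteristic connections is killed by the projective second fundamental form of the fibers $\sC_x$, whose non-degeneracy is precisely what the non-linearity hypothesis guarantees via the cited Gauss-map theorem. Set $n = \dim M$ and $p = \dim \sC_x$.

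First I would reduce to a vanishing statement. Suppose $\sF$ and $\sF'$ are both characteristic connections. Each splits the sequence \eqref{split}, so if $\xi$ is a local frame of $\sF$, then $\sF'$ has a local frame $\xi'$ with the same image as $\xi$ in $\sT/\sV$, and $\psi := \xi' - \xi$ is a local section of $\sV$; uniqueness amounts to $\psi \equiv 0$. The essential observation is that $\sP$ depends only on $\sC$, not on the chosen connection. Since sections of $\sF$ are the multiples $f\xi$ and $\xi \in \sT \subset \sP$, the characteristic condition for $\sF$ is equivalent to $[\sP, \xi] \subset \sP$; subtracting it from the same condition for $\sF'$ yields $[\sP, \psi] \subset \sP$ with $\psi \in \sV$.

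Next I would unwind $[\sP, \psi] \subset \sP$ by means of Proposition \ref{p.sP}. Pick a local frame $\zeta_1, \dots, \zeta_p$ of $\sV$ and put $\eta_a := [\xi, \zeta_a]$; by Proposition \ref{p.sP} the classes $\eta_a \bmod \sT$ frame $\sP/\sT$, so that $\xi, \zeta_a, \eta_a$ frame $\sP$. Now $[\zeta_b, \psi] \in \sV$ by integrability of $\sV$, and $[\xi, \psi] \in \sP$ because $[\xi, \sV] \subset \sP$; hence the content of $[\sP, \psi] \subset \sP$ is exactly $[\eta_b, \psi] \in \sP$ for all $b$. Writing $\psi = \sum_a \phi^a \zeta_a$, this says $\sum_a \phi^a [\eta_b, \zeta_a] \equiv 0 \pmod{\sP}$ for every $b$, so everything is controlled by the tensor
\[ \Theta \colon S^2\sV \longrightarrow T(\sC)/\sP, \qquad \Theta(\zeta_a, \zeta_b) := [\eta_b, \zeta_a] \bmod \sP. \]
Its symmetry I would deduce from the Jacobi identity together with $[\xi, [\zeta_a, \zeta_b]] \in \sP$, and the goal becomes showing that $\Theta$ has no kernel, so that $\Theta(\psi, \cdot) = 0$ forces $\psi = 0$.

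The crux, and the step I expect to be the main obstacle, is to identify $\Theta$ with the projective second fundamental form of $\sC_x \subset \BP T_x(M)$. Along a fiber, $\sV$ is the tangent bundle $T(\sC_x)$, the class $\xi \bmod \sV$ plays the role of the tautological $\sO(-1)$-direction, and a rank count $\rk(T(\sC)/\sP) = (n+p) - (2p+1) = n-p-1 = \rk N_{\sC_x/\BP T_x(M)}$ matches the target of $\Theta$ with the normal bundle of $\sC_x$ up to a twist by the tautological bundle. Making this dictionary precise — converting the Lie-bracket computation on $\sC$ into the second-order projective geometry of $\sC_x$ — is where the real work lies. Once it is in place, $\Ker \Theta$ at a general point $\alpha \in \sC_x$ is the kernel of the second fundamental form, hence the tangent space to the fiber of the Gauss map of $\sC_x$ through $\alpha$. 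As $\varpi$ is a smooth morphism, $\sC_x$ is a smooth irreducible projective variety, and by hypothesis it is not a linear subspace for general $x$; Theorem 3.1.4 of \cite{HM99} then forces its Gauss map to be finite, so $\Ker \Theta = 0$ at general points. Consequently $\psi$ vanishes on a dense open subset and therefore identically, which gives $\sF = \sF'$.
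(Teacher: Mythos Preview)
Your argument is correct and follows precisely the route the paper indicates: the paper gives no proof beyond invoking Theorem~3.1.4 of \cite{HM99} together with the fact that a smooth irreducible projective variety with degenerate Gauss map is linear, and what you have written is exactly an unpacking of that citation --- reducing uniqueness to the injectivity of the bracket tensor $\Theta$ on $\sV$, and identifying $\Theta$ with the projective second fundamental form of $\sC_x$ so that its kernel is the Gauss fiber. The step you flag as the crux (the identification of $\Theta$ with the second fundamental form) is indeed the substantive point, but it is standard and is precisely what the cited theorem in \cite{HM99} supplies.
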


\begin{definition}\label{d.isotrivial2}
Let $Z \subset \BP V$ be an irreducible nonsingular nondegenerate projective variety.
Let $\sC \subset \BP T(M)$ be a smooth cone structure.
We say that $\sC$ is {\em isotrivial of type} $Z \subset \BP V$ if the fiber $\sC_x \subset \BP T_x(M)$ is
isomorphic to $Z \subset \BP V$ by a linear isomorphism $T_x(M) \cong V$ for each point $x \in M$.
\end{definition}

The following is Proposition 5.2 of \cite{Hw10}.

\begin{proposition}\label{p.5.2}
Let $\sC \subset \BP V$ be a smooth cone structure which is isotrivial of type $Z \subset \BP V$. If $H^0(Z, T_Z(1)) =0$, then
there exists a unique conic connection on $\sC$. \end{proposition}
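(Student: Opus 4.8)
The plan is to produce the conic connection locally on the base $M$ and then to glue the local pieces, using the hypothesis $H^0(Z, T_Z(1))=0$ to guarantee both the local uniqueness and the compatibility needed for gluing. Recall from Definition~\ref{d.connection} that a conic connection is precisely a splitting of the exact sequence (\ref{split}): a line subbundle $\sF \subset \sT$ mapping isomorphically onto $\sT/\sV \cong \sO(-1)|_{\sC}$. Consequently, over any $\varpi$-saturated open set $\varpi^{-1}(U)$, the set of conic connections is either empty or a torsor under
$$H^0\big(\varpi^{-1}(U),\ \mathcal{H}om(\sO(-1)|_{\sC}, \sV)\big) \;=\; H^0\big(\varpi^{-1}(U),\ \sV \otimes \sO(1)|_{\sC}\big),$$
since any two splittings $s,s'$ differ by a homomorphism $s-s'\colon \sO(-1)|_{\sC} \to \sV$.

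For uniqueness I would compute this torsor group fibrewise. Over a fibre $\sC_x \cong Z$ one has $\sV|_{\sC_x} = T_Z$ and $\sO(1)|_{\sC_x} = \sO_Z(1)$, whence $(\sV \otimes \sO(1))|_{\sC_x} \cong T_Z(1)$. The hypothesis $H^0(Z, T_Z(1)) = 0$ then forces any global section of $\sV \otimes \sO(1)|_{\sC}$ over $\varpi^{-1}(U)$ to restrict to $0$ on every fibre; since $\varpi$ is a smooth surjection with these fibres, such a section vanishes identically. Equivalently $\varpi_*\big(\sV \otimes \sO(1)|_{\sC}\big) = 0$. Hence over each $\varpi^{-1}(U)$ there is at most one conic connection, so the global connection, once produced, is automatically unique.

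For existence I would invoke the isotriviality. Shrinking $U \subset M$ so that $T(M)|_U$ is trivial, fix a trivialization $T(M)|_U \cong U \times V$, inducing $\BP T(U) \cong U \times \BP V$. By Definition~\ref{d.isotrivial2} each fibre $\sC_x$ is carried to $Z$ by a linear isomorphism $T_x(M) \cong V$; as the linear isomorphisms taking $\sC_x$ to $Z$ form a holomorphic ${\rm Aut}(Z \subset \BP V)$-bundle over $U$ and $\mathrm{PGL}(V) \to \mathrm{PGL}(V)/{\rm Aut}(Z \subset \BP V)$ admits local sections, after further shrinking $U$ we may arrange the trivialization so that $\sC|_U$ corresponds to the product $U \times Z \subset U \times \BP V$. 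In this flat model, writing a point as $(x,\alpha)$ with $\alpha \in Z$, we have $T_{(x,\alpha)}(\sC) = T_x(U) \oplus T_\alpha(Z)$, with ${\rm d}\varpi$ the first projection, $\sV = 0 \oplus T_\alpha(Z)$, and $\sT = \widehat{\alpha} \oplus T_\alpha(Z)$ where $\widehat{\alpha} \subset V = T_x(U)$. Setting
$$\sF_{(x,\alpha)} := \widehat{\alpha} \oplus 0 \;\subset\; T_{(x,\alpha)}(\sC)$$
gives a holomorphic line subbundle — it is the tautological $\sO_Z(-1)$ lifted horizontally — satisfying $\sF \subset \sT$ and $\sF \cap \sV = 0$, i.e. a conic connection over $\varpi^{-1}(U)$.

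Finally I would glue. Covering $M$ by such charts $U_i$, local existence together with the uniqueness above yields exactly one conic connection over each $\varpi^{-1}(U_i)$, and on the overlaps $\varpi^{-1}(U_i \cap U_j)$ the two restrictions must agree, again by uniqueness; the local connections therefore patch to a single line subbundle $\sF \subset \sT \subset T(\sC)$, the desired unique conic connection. The main obstacle is the existence step: one must upgrade the pointwise statement of isotriviality to an honest local product compatible with the projective structure on $\BP T(U)$, which is exactly where the fibre-bundle structure of $\varpi$ and the local sections of $\mathrm{PGL}(V)/{\rm Aut}(Z \subset \BP V)$ enter. A purely cohomological attempt at a global splitting would instead be obstructed fibrewise by $H^1(Z, T_Z(1))$, which the hypothesis does not control; this is circumvented by building the connection on the flat local model, where it exists tautologically, and then using $H^0(Z, T_Z(1)) = 0$ to glue.
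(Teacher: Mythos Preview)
Your argument is correct. The paper does not give its own proof of this proposition, merely citing Proposition~5.2 of \cite{Hw10}, so there is no in-paper argument to compare against directly. Your approach---identifying conic connections with splittings of the sequence~(\ref{split}), using the fibrewise identification $(\sV\otimes\sO(1))|_{\sC_x}\cong T_Z(1)$ together with the hypothesis $H^0(Z,T_Z(1))=0$ to force uniqueness, and producing a local splitting on the product model $\sC|_U\cong U\times Z$ supplied by isotriviality---is the standard one and is essentially what the cited reference does. It is worth noting that your local existence step is precisely what the paper later packages as the \emph{geodesic connection} $\sF^\omega$ of an \emph{adapted coframe} (Definitions~\ref{d.iso} and~\ref{d.geodesic}): the horizontal line $\widehat{\alpha}\oplus 0$ in your product model is exactly $\sF^\omega$ for the coframe $\omega$ corresponding to the chosen trivialisation, so the surrounding material in the paper would reconstruct the existence half of your argument even though the proposition itself is stated without proof.
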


Now we introduce Cartan's coframe method.

\begin{definition}\label{d.coframe}
Let $M$ be a complex manifold of dimension $n$. Fix a vector space
$V$ of dimension $n$. A $V$-valued 1-form $\omega$ on $M$ is
called a {\em coframe} if for each $x \in M$, the homomorphism
$\omega_x : T_x(M) \rightarrow V$ is an isomorphism.
 Fix a basis $e_1, \ldots, e_n$ of $V$. Then $\omega$ can be written as
$$\omega = \theta^1 e_1 + \cdots + \theta^n e_n$$ for some $1$-forms $\theta^1, \ldots, \theta^n$ on $M$.
 A coframe $\omega$ is {\em closed} if $d \theta^i =0$ for all $i$. A coframe $\omega$ is {\em
conformally closed} if for any point $x \in M$, there exist a
neighborhood $x \in U \subset M$  and a non-vanishing  function $f
$ on $U$ such that $f \omega $ is closed on $U$. The closedness (resp. conformal closedness) of a coframe is independent of the choice of a basis of $V$. \end{definition}

\begin{definition}\label{d.sigma}
Given a coframe $\omega$, let
 $T^i_{jk}$ be the functions on $M$ defined by $$ d \theta^i = \sum^n_{j,k =1} T^i_{jk} \theta^j \wedge \theta^k, \mbox{ satisfying } T^i_{jk} = - T^i_{kj}.$$ The $\Hom(V \wedge V, V)$-valued function
$\sigma^{\omega}$ defined by $\sigma^{\omega}(e_j, e_k) = \sum_{i=1}^n T^i_{jk} e_i$ is called the {\em structure function}
of the coframe $\omega$.
Denote by $\frac{\partial}{\partial \theta^1}, \ldots, \frac{\partial}{\partial \theta^n}$ the vector fields on $M$ defined by $$\langle \frac{\partial}{\partial \theta^i}, \theta^j \rangle = \delta_{ij}$$ where $\delta_{ij} = 0$ if $i \neq j$ and $\delta_{ii} =1$. Then $$ \left[ \frac{\partial}{\partial \theta^j}, \frac{\partial}{\partial \theta^k} \right] = 2 \sum_{i} T^i_{kj} \frac{\partial}{\partial \theta^i}.$$ \end{definition}

\begin{definition}\label{d.var} Denote by $\pi: T(M) \to M$ the natural projection to $M$. Then $\vartheta^i = \pi^* \theta^i$ are 1-forms on the complex manifold $T(M)$. The 1-form $\theta^i$ on $M$ can be viewed as a holomorphic function on $T(M)$, which we denote by $\lambda^i$. In other words, the value of the function $\lambda^i$ at a point $v \in T(M)$ is just $\lambda^i(v) = \theta^i(v).$
So we have a collection of 1-forms on $T(M)$  $$\vartheta^1, \ldots, \vartheta^n, d\lambda^1, \ldots, d\lambda^n.$$
Denote by $\frac{\partial}{\partial \vartheta^i}$ and $\frac{\partial}{\partial \lambda^j}$ the holomorphic vector fields on $T(M)$ defined by $$\langle \frac{\partial}{\partial \vartheta^i}, \vartheta^j \rangle = \delta_{ij} =
\langle \frac{\partial}{\partial \lambda^i}, \lambda^j \rangle \mbox{ and } \langle \frac{\partial}{\partial \vartheta^i}, d \lambda^j \rangle =0=\langle \frac{\partial}{\partial \lambda^i}, \vartheta^j \rangle.$$
\end{definition}

The proof of the following lemma is straight forward.

\begin{lemma}\label{l.basis}
Using the notation of Definition \ref{d.var} and writing $T^i_{jk}$ in place of the pullback $\pi^* T^i_{jk}$ for simplicity, we have $$ d \vartheta^i = \sum_{j,k} T^i_{jk} \vartheta^j \wedge \vartheta^k $$
$$ \left[ \frac{\partial}{\partial \vartheta^j}, \frac{\partial}{\partial \vartheta^k} \right]  = 2 \sum_{i} T^i_{kj} \frac{\partial}{\partial \vartheta^i} $$ $$\left[ \frac{\partial}{\partial \lambda^j},  \frac{\partial}{\partial \lambda^k} \right]  = \left[ \frac{\partial}{\partial \vartheta^j}, \frac{\partial}{\partial \lambda^k} \right]  = 0. $$ \end{lemma}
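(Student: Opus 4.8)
The plan is to derive all three identities from two elementary inputs: the naturality of the exterior derivative under pullback, and the Cartan formula $d\alpha(X,Y) = X(\alpha(Y)) - Y(\alpha(X)) - \alpha([X,Y])$ applied to the coframe $\vartheta^1, \ldots, \vartheta^n, d\lambda^1, \ldots, d\lambda^n$ on $T(M)$, whose dual frame is exactly $\frac{\partial}{\partial \vartheta^i}, \frac{\partial}{\partial \lambda^j}$ by Definition \ref{d.var}.

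First I would prove the first identity directly. Since $\vartheta^i = \pi^*\theta^i$ and $d$ commutes with pullback, $d\vartheta^i = \pi^*(d\theta^i) = \pi^*\bigl( \sum_{j,k} T^i_{jk}\,\theta^j \wedge \theta^k \bigr) = \sum_{j,k} T^i_{jk}\,\vartheta^j \wedge \vartheta^k$, writing $T^i_{jk}$ for its pullback $\pi^* T^i_{jk}$ as in the statement. Alongside this I record the trivial fact $d(d\lambda^i) = 0$; these two exterior derivatives on the coframe are the only structural data the bracket computations will use.

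For the bracket relations the key observation is that every pairing of a frame field ($\frac{\partial}{\partial \vartheta^i}$ or $\frac{\partial}{\partial \lambda^j}$) with a coframe form ($\vartheta^k$ or $d\lambda^\ell$) is a constant, either $0$ or $1$, by Definition \ref{d.var}. Hence in the Cartan formula the two terms $X(\alpha(Y))$ and $Y(\alpha(X))$ vanish, and it collapses to $\alpha([X,Y]) = -d\alpha(X,Y)$. Taking $\alpha = d\lambda^i$ and using $d(d\lambda^i) = 0$ shows that every bracket among the frame fields has vanishing $\frac{\partial}{\partial \lambda^i}$-component; taking $\alpha = \vartheta^i$ shows its $\frac{\partial}{\partial \vartheta^i}$-component equals $-d\vartheta^i(X,Y)$. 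So everything reduces to evaluating $d\vartheta^i(X,Y) = \sum_{p,q} T^i_{pq}\,(\vartheta^p \wedge \vartheta^q)(X,Y)$ on the three relevant pairs of frame fields.

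The evaluation is pure bookkeeping. For $X = \frac{\partial}{\partial \vartheta^j}$ and $Y = \frac{\partial}{\partial \vartheta^k}$ one has $(\vartheta^p \wedge \vartheta^q)(X,Y) = \delta_{pj}\delta_{qk} - \delta_{pk}\delta_{qj}$, so $d\vartheta^i(X,Y) = T^i_{jk} - T^i_{kj} = -2T^i_{kj}$; the $\frac{\partial}{\partial \vartheta^i}$-component of the bracket is therefore $-d\vartheta^i(X,Y) = 2T^i_{kj}$, giving the second identity. When at least one of $X, Y$ is a $\frac{\partial}{\partial \lambda}$-field, each factor $\vartheta^p$ annihilates it, so $d\vartheta^i(X,Y) = 0$ and both remaining brackets vanish, yielding the third line. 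There is no genuine obstacle beyond keeping the antisymmetry $T^i_{jk} = -T^i_{kj}$ and the factor $2$ straight; the whole content lies in the single observation that the frame/coframe pairings are constant, which annihilates the nonlinear terms of the Cartan formula. This is precisely why the lemma is, as stated, straightforward.
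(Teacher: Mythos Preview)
Your proof is correct and is exactly the straightforward verification the paper has in mind; the paper itself omits the proof entirely, stating only that it is ``straight forward.'' Your use of naturality of $d$ under pullback for the first identity and the Cartan formula $d\alpha(X,Y)=X(\alpha(Y))-Y(\alpha(X))-\alpha([X,Y])$ with the constancy of the dual pairings for the brackets is precisely the intended argument, and your handling of the antisymmetry $T^i_{jk}=-T^i_{kj}$ and the factor~$2$ is consistent with the conventions already used in Definition~\ref{d.sigma}.
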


\begin{definition}\label{d.gamma}
Given a coframe $\omega$ on $M$, the vector field $\gamma$ on $T(M)$ defined by $$\gamma := \lambda^1 \frac{\partial}{\partial \vartheta^1} + \cdots + \lambda^n \frac{\partial}{\partial \vartheta_n}$$ is called the {\em geodesic flow} of the coframe $\omega$. It is easy to check that $\gamma$ is determined by $\omega$, independent of the choice of the basis. \end{definition}

The following is immediate from the definition.

\begin{lemma}\label{l.easy} For a point $ u \in T(M),$ denote by
$\gamma_u \in T_{u}(T(M))$ the value of $\gamma$ at $u$. Then
$d \pi_u (\gamma_u) = u.$ \end{lemma}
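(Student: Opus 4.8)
The plan is to identify the tangent vector $d\pi_u(\gamma_u) \in T_x(M)$, with $x = \pi(u)$, with $u$ itself by comparing their components relative to the coframe $\theta^1, \ldots, \theta^n$. Since $\omega_x : T_x(M) \to V$ is a linear isomorphism, the $1$-forms $\theta^1, \ldots, \theta^n$ restrict to a basis of $T^*_x(M)$, so two vectors in $T_x(M)$ coincide as soon as their pairings against $\theta^1, \ldots, \theta^n$ all agree. Thus it suffices to verify $\langle d\pi_u(\gamma_u), \theta^i \rangle = \langle u, \theta^i \rangle$ for every $i$.

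First I would use naturality of the pullback, together with the defining relation $\vartheta^i = \pi^* \theta^i$ from Definition \ref{d.var}, to transport the left-hand side to a computation on $T(M)$:
$$\langle d\pi_u(\gamma_u), \theta^i \rangle = \langle \gamma_u, \pi^* \theta^i \rangle = \langle \gamma_u, \vartheta^i \rangle.$$
Then I would substitute the definition $\gamma = \sum_j \lambda^j \frac{\partial}{\partial \vartheta^j}$ from Definition \ref{d.gamma} and invoke the duality relations $\langle \frac{\partial}{\partial \vartheta^j}, \vartheta^i \rangle = \delta_{ij}$ of Definition \ref{d.var}, obtaining
$$\langle \gamma_u, \vartheta^i \rangle = \sum_j \lambda^j(u)\, \delta_{ij} = \lambda^i(u).$$
Finally, the very definition of $\lambda^i$ as the fiberwise-linear function $\lambda^i(v) = \theta^i(v)$ gives $\lambda^i(u) = \langle u, \theta^i \rangle$, which is exactly the $i$-th coframe component of $u$ we were after. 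Running over all $i$ and using that the $\theta^i$ form a coframe yields $d\pi_u(\gamma_u) = u$.

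There is no genuine obstacle here: the statement is a formal consequence of the three bookkeeping conventions packaged in Definition \ref{d.var}, namely the pullback $\vartheta^i = \pi^* \theta^i$, the tautological function $\lambda^i$ on $T(M)$, and the frame $\frac{\partial}{\partial \vartheta^i}$ dual to the $\vartheta^i$. The only point requiring care is to keep the $1$-form $\theta^i$ on $M$ notationally and conceptually distinct from the function $\lambda^i$ on $T(M)$, even though they take the same numerical value when evaluated along $u$; it is precisely this interplay that makes $\gamma$ the tautological "geodesic-flow" field whose horizontal projection recovers the basepoint vector. Once the coframe reduces the claim to a componentwise identity, each remaining step is a one-line unwinding of definitions, so I expect the proof to be short.
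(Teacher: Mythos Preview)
Your proof is correct and is precisely the unwinding the paper has in mind: the paper offers no argument beyond the phrase ``immediate from the definition,'' and your componentwise verification via $\langle d\pi_u(\gamma_u),\theta^i\rangle = \langle \gamma_u,\vartheta^i\rangle = \lambda^i(u) = \theta^i(u)$ is exactly that immediacy spelled out.
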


The following lemma is contained in the proof of Proposition 5.6 of \cite{Hw10}.

\begin{lemma}\label{l.gamma}
 Let $\omega$ be a coframe on $M$ and let $\tilde{v}$ be a germ of vector fields near a point  $u \in T(M)$ of the form $$\tilde{v} = h_1(\lambda) \frac{\partial}{\partial \lambda^1} + \cdots + h_n(\lambda) \frac{\partial}{\partial \lambda^n}$$ where $h_i(\lambda) = h_i(\lambda^1, \ldots, \lambda^n)$ is a germ of holomorphic functions in $n$-variables.
Then $$ d \pi_u ([[\tilde{v}, \gamma],
\gamma]_u)  = 2 \omega_x^{-1}(\sigma_x^{\omega}(\omega_x(u), \omega_x(v)))$$ where $x = \pi(u)$, $\omega_x: T_x(M)\to V$ is the restriction of $\omega$ at $x$ and $\sigma^{\omega}_x \in \Hom(V \wedge V, V)$ is the value of the structure function $\sigma^{\omega}$ at $x$.  \end{lemma}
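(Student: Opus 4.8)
The plan is to establish the identity by a direct computation in the frame furnished by Lemma \ref{l.basis}, once the meaning of $v$ is fixed. Here $v \in T_x(M)$ should be read as the tangent vector corresponding to the vertical vector $\tilde{v}_u$ under the canonical identification of the fibre $\pi^{-1}(x) \cong T_x(M)$ with its own tangent space: since $\frac{\partial}{\partial \lambda^a}$ is the fibre direction dual to $e_a$, this means $\omega_x(v) = \sum_a h_a(\lambda(u))\, e_a$, whereas $\omega_x(u) = \sum_b \lambda^b(u)\, e_b$ because $\lambda^b(u) = \theta^b(u)$. Two elementary identifications will be used repeatedly: $d\pi$ carries $\frac{\partial}{\partial \vartheta^i}$ to $\frac{\partial}{\partial \theta^i}$ (pair both against $\theta^j$ and use $\vartheta^j = \pi^* \theta^j$), and $\omega_x^{-1}(e_i) = \frac{\partial}{\partial \theta^i}$ at $x$.

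First I would compute the inner bracket $[\tilde{v}, \gamma]$. Expanding $[\sum_a h_a \frac{\partial}{\partial \lambda^a},\, \sum_b \lambda^b \frac{\partial}{\partial \vartheta^b}]$ with the Leibniz rule $[fX, gY] = fg[X,Y] + f(Xg)Y - g(Yf)X$, the bracket $[\frac{\partial}{\partial \lambda^a}, \frac{\partial}{\partial \vartheta^b}]$ vanishes by Lemma \ref{l.basis}, the term $g(Yf)X$ vanishes because $\frac{\partial}{\partial \vartheta^b}$ annihilates the functions $h_a = h_a(\lambda)$, and only $h_a\, \frac{\partial}{\partial \lambda^a}(\lambda^b)\, \frac{\partial}{\partial \vartheta^b} = h_a \delta_{ab}\, \frac{\partial}{\partial \vartheta^b}$ survives. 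Thus $[\tilde{v}, \gamma] = \sum_a h_a \frac{\partial}{\partial \vartheta^a}$, a field with the same coefficients $h_a$ but now along the $\frac{\partial}{\partial \vartheta^a}$.

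Next I would bracket this against $\gamma$ once more. In $[\sum_a h_a \frac{\partial}{\partial \vartheta^a},\, \sum_b \lambda^b \frac{\partial}{\partial \vartheta^b}]$ both Leibniz derivative terms vanish again, since $\frac{\partial}{\partial \vartheta^a}$ annihilates $\lambda^b$ and $\frac{\partial}{\partial \vartheta^b}$ annihilates $h_a$, so only the genuine bracket $[\frac{\partial}{\partial \vartheta^a}, \frac{\partial}{\partial \vartheta^b}] = 2\sum_i T^i_{ba}\frac{\partial}{\partial \vartheta^i}$ of Lemma \ref{l.basis} contributes. This gives $[[\tilde{v}, \gamma], \gamma] = 2\sum_{a,b,i} h_a \lambda^b T^i_{ba}\, \frac{\partial}{\partial \vartheta^i}$. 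Applying $d\pi_u$ and evaluating at $u$ yields $2\sum_{a,b,i} h_a(\lambda(u))\, \lambda^b(u)\, T^i_{ba}(x)\, \frac{\partial}{\partial \theta^i}$ at $x$. Expanding the right-hand side $\sigma^{\omega}_x(\omega_x(u), \omega_x(v))$ bilinearly via $\sigma^{\omega}(e_b, e_a) = \sum_i T^i_{ba} e_i$ and applying $\omega_x^{-1}$ produces the identical expression, which closes the argument.

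The computation is routine; the only point demanding attention is the bookkeeping of the index order in the relation $[\frac{\partial}{\partial \vartheta^a}, \frac{\partial}{\partial \vartheta^b}] = 2\sum_i T^i_{ba}\frac{\partial}{\partial \vartheta^i}$ together with the factor $2$ and the antisymmetry $T^i_{jk} = -T^i_{kj}$. One must verify that the slot of $\sigma^{\omega}$ fed by $\omega_x(u)$ (the index $b$, coming from $\gamma$) and the slot fed by $\omega_x(v)$ (the index $a$, coming from $\tilde{v}$) are matched so that the coefficients $T^i_{ba}$ agree on both sides. The conceptual mechanism making everything collapse is that $\tilde{v}$ depends only on the $\lambda$-variables while $\gamma$ is linear in $\lambda$ with constant $\frac{\partial}{\partial \vartheta}$-coefficients; this is precisely what kills all the Leibniz cross-terms and leaves the structure function as the sole surviving datum.
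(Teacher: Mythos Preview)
Your proof is correct and follows essentially the same approach as the paper: both arguments compute $[\tilde{v},\gamma]=\sum_a h_a\,\frac{\partial}{\partial\vartheta^a}$ directly, then apply the bracket relation of Lemma~\ref{l.basis} to obtain $[[\tilde{v},\gamma],\gamma]=2\sum h_a\lambda^b T^i_{ba}\,\frac{\partial}{\partial\vartheta^i}$ and identify this with the structure function. Your version is slightly more explicit than the paper's in spelling out the meaning of $v$ (which the statement leaves implicit) and in tracking why the Leibniz cross-terms vanish, but the substance is identical.
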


\begin{proof}
 Note that $$[\tilde{v}, \gamma] = [ \sum_i h_i(\lambda) \frac{\partial}{\partial \lambda^i}, \sum_j
 \lambda^j \frac{\partial}{\partial \vartheta^j} ] = \sum_i h_i(\lambda) \frac{\partial}{\partial \vartheta^i}.$$  Applying Lemma \ref{l.basis},  we have \begin{eqnarray*} [[\tilde{v}, \gamma], \gamma] &=& [ \sum_j h_j(\lambda) \frac{\partial}{\partial \vartheta^j},
  \sum_k \lambda^k \frac{\partial}{\partial \vartheta^k}] \\  &=&    2 \sum_{i,j,k} h_j(\lambda) \lambda^k T^i_{kj} \frac{\partial}{\partial \vartheta^i}\\
   &=& 2 \omega_x^{-1}(\sigma_x^{\omega}(\omega_x(u), \omega_x(v))).  \end{eqnarray*}
\end{proof}

\begin{definition}\label{d.iso}  Let $Z \subset
\BP V$ be a fixed projective submanifold and  let $\sC \subset \BP
T(M)$ be a $Z$-isotrivial cone structure on $M$ as in Definition \ref{d.cone}.    A
coframe $\omega$ on $M$ is {\em adapted to the cone structure}
$\sC \subset \BP T(M)$ if $\omega_x: T_x(M) \to V$ sends $\sC_x
\subset \BP T_x(M)$ to $Z \subset \BP V.$ Any $Z$-isotrivial cone
structure has an adapted coframe if we replace $M$ by a sufficiently small open
subset.
\end{definition}

\begin{definition}\label{d.flat}
A smooth cone structure $\sC \subset \BP T(M)$ is {\em locally flat }
if  there are holomorphic coordinates $(x^1, \ldots, x^n)$ on a non-empty open subset $U \subset X$ such that the coordinate vector fields
  $\frac{\partial}{\partial x^1}, \ldots,  \frac{\partial}{\partial x^n}$ are $\sC$-preserving, in the sense described in Definition \ref{d.vmrt}. \end{definition}

The following is Proposition 4.5 in \cite{Hw10} and the proof is straight-forward.

\begin{proposition}\label{p.4.5}
 A $Z$-isotrivial cone structure on a manifold $M$ is locally flat if and only
if each point of $M$ has an open neighborhood over which one can find
 a conformally closed  adapted coframe. \end{proposition}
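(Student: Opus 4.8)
The plan is to prove both implications by routing them through the intermediate notion of a \emph{closed} adapted coframe, and the single observation that makes everything work is that rescaling does not affect adaptedness. Indeed, if $\omega$ is an adapted coframe and $f$ is a nowhere-vanishing holomorphic function, then $(f\omega)_x = f(x)\,\omega_x$ differs from $\omega_x$ by the scalar $f(x)$, which acts trivially on $\BP V$; hence $(f\omega)_x$ sends $\sC_x$ to the same $Z \subset \BP V$ as $\omega_x$, so $f\omega$ is again adapted. In particular, the existence of a conformally closed adapted coframe near a point is equivalent to the existence of a closed one (rescale to make it closed; conversely a closed coframe is trivially conformally closed). It therefore suffices to show that $\sC$ is locally flat if and only if it admits a closed adapted coframe near each point.

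For the ``only if'' direction I would start from local flatness, with coordinates $(x^1,\dots,x^n)$ on $U$ whose coordinate vector fields $\frac{\partial}{\partial x^i}$ are all $\sC$-preserving. The local one-parameter group generated by $\frac{\partial}{\partial x^i}$ is the translation $x \mapsto x + t e_i$ in these coordinates, and its differential carries $\frac{\partial}{\partial x^k}|_x$ to $\frac{\partial}{\partial x^k}|_{x+te_i}$. Reading the $\sC$-preservation through the coordinate-frame identification $\iota_x : T_x(M) \to \C^n$ (sending $\frac{\partial}{\partial x^k}$ to the $k$-th standard basis vector), the image $\iota_x(\sC_x) \subset \BP^{n-1}$ is invariant under all these translations, hence equals a fixed subvariety $Z' \subset \BP^{n-1}$ independent of $x$. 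By isotriviality $Z' \cong Z$ via a linear isomorphism $\C^n \cong V$; choosing $L \in {\rm GL}(V)$ realizing this and setting $\omega_x := L \circ \iota_x$ produces an adapted coframe. Since each $L(\iota_x(\frac{\partial}{\partial x^k}))$ is a constant vector of $V$, the forms $\theta^j$ are constant-coefficient combinations of the $dx^k$, whence $d\theta^j = 0$ and $\omega$ is closed.

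For the ``if'' direction I would take a conformally closed adapted coframe near a point and, by the rescaling observation, assume it is already closed, so $d\theta^i = 0$ for all $i$. The Poincar\'e lemma then yields holomorphic functions $x^i$ with $\theta^i = dx^i$, and since the $\theta^i$ are pointwise linearly independent (as $\omega$ is a coframe) the $x^i$ form a local coordinate system. By construction $\omega_x(\frac{\partial}{\partial x^k}) = e_k$ at \emph{every} point $x$, so the coordinate frame is sent to the fixed basis $e_1,\dots,e_n$ of $V$ everywhere. Consequently the translation flow of $\frac{\partial}{\partial x^i}$ intertwines $\omega_x$ with $\omega_{x+te_i}$ through the identity of $V$; as both identifications carry $\sC$ to the same fixed $Z$, this flow preserves $\sC$, i.e. each $\frac{\partial}{\partial x^i}$ is $\sC$-preserving. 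Hence $\sC$ is locally flat.

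As the proposition advertises, there is no genuine obstacle: the argument rests only on the scale-invariance of adaptedness and on the Poincar\'e lemma. The one point deserving care is the bookkeeping in the ``only if'' direction, namely verifying that $\sC$-preservation of the coordinate fields really forces $\iota_x(\sC_x)$ to be constant in $x$; this uses that the differential of a coordinate translation acts as the identity on the coordinate frame. It is worth noting that the argument in fact produces a \emph{closed} adapted coframe, not merely a conformally closed one; the weaker notion is retained in the statement because conformal closedness is what one verifies in practice, and because phrasing the ``if'' direction with the weaker hypothesis makes it the stronger, more useful implication.
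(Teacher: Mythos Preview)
Your proof is correct and is exactly the straightforward argument one would expect. Note that the paper does not actually give its own proof of this proposition: it simply cites Proposition~4.5 of \cite{Hw10} and remarks that the proof is straight-forward, so there is no alternative approach to compare against. Your reduction via the scale-invariance of adaptedness, together with the Poincar\'e lemma, is the natural way to fill in the details.
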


The next proposition is Proposition 4.6 in \cite{Hw10}.

\begin{proposition}\label{p.geodesic}
Let $\omega$ be a coframe adapted to a $Z$-isotrivial cone
structure $\sC \subset \BP T(M).$ Regard $T(M)$ as a complex manifold.
 Then  the geodesic flow $\gamma$ is tangent to the affine cone $\widehat{\sC}
\subset T(M).$
 \end{proposition}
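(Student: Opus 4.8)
The plan is to realize $\widehat{\sC}$ explicitly as a zero locus inside $T(M)$ whose defining functions depend only on the fiber coordinates $\lambda^1,\ldots,\lambda^n$, and then to observe that $\gamma$ annihilates each of these functions. First I would fix homogeneous polynomials $f_1,\ldots,f_N$ on $V$, written in the coordinates dual to the chosen basis $e_1,\ldots,e_n$, generating the ideal of the affine cone $\widehat{Z}\subset V$. For a tangent vector $v\in T_x(M)$ the coframe gives $\omega_x(v)=\sum_i \lambda^i(v)\,e_i$, so that $(\lambda^1(v),\ldots,\lambda^n(v))$ are exactly the coordinates of $\omega_x(v)\in V$. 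Because $\omega$ is adapted in the sense of Definition \ref{d.iso}, the isomorphism $\omega_x$ carries $\widehat{\sC}_x$ onto $\widehat{Z}$ for \emph{every} base point $x$; hence $v\in\widehat{\sC}$ if and only if $f_\ell(\lambda^1(v),\ldots,\lambda^n(v))=0$ for all $\ell$. The essential point, and the only place where isotriviality and adaptedness genuinely enter, is that the $f_\ell$ are one fixed collection independent of $x$, so that $\widehat{\sC}$ is cut out in $T(M)$ by the base-independent functions $F_\ell:=f_\ell(\lambda^1,\ldots,\lambda^n)$.

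Next I would compute the action of $\gamma$ on these functions. By Definition \ref{d.var} we have $\langle \frac{\partial}{\partial\vartheta^i}, d\lambda^j\rangle=0$, i.e. each $\frac{\partial}{\partial\vartheta^i}$ annihilates every fiber coordinate $\lambda^j$. Consequently $\gamma(\lambda^j)=\sum_i \lambda^i\,\frac{\partial}{\partial\vartheta^i}(\lambda^j)=0$ for all $j$, and by the chain rule $\gamma(F_\ell)=\sum_k \bigl(\partial f_\ell/\partial\lambda^k\bigr)(\lambda)\,\gamma(\lambda^k)=0$ identically on $T(M)$. To conclude tangency, note that at any smooth point $p$ of $\widehat{\sC}$ the tangent space is the common kernel of the differentials $dF_\ell|_p$; since $dF_\ell(\gamma)=\gamma(F_\ell)=0$, we get $\gamma_p\in T_p(\widehat{\sC})$. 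The only locus where $\widehat{\sC}$ fails to be smooth is the zero section, over which $\widehat{Z}$ is singular at its vertex, but there every $\lambda^i$ vanishes and hence $\gamma=0$, which is trivially tangent. Thus $\gamma$ is everywhere tangent to $\widehat{\sC}$.

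I do not expect a real obstacle here: the content lies entirely in correctly identifying the defining functions $F_\ell$ of $\widehat{\sC}$ as functions of the fiber coordinates alone, after which $\gamma(F_\ell)=0$ is immediate from the shape of $\gamma$. The mildest point needing a word is the tangency along the singular zero-section locus, which is handled by the vanishing of $\gamma$ there. One may also phrase the whole argument geometrically: since $\gamma$ kills all the $\lambda^j$, its local flow preserves the level sets of $(\lambda^1,\ldots,\lambda^n)$, and $\widehat{\sC}$ is a union of such level sets, so the flow preserves $\widehat{\sC}$ and $\gamma$ is tangent to it.
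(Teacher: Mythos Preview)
Your proposal is correct and follows essentially the same argument as the paper: both identify $\widehat{\sC}$ as the zero locus of the functions $f_a(\lambda^1,\ldots,\lambda^n)$ (using adaptedness), and then observe that $\gamma = \sum_i \lambda^i\,\partial/\partial\vartheta^i$ annihilates each such function because $\partial/\partial\vartheta^i$ kills the fiber coordinates $\lambda^j$. Your version is a bit more explicit about the chain rule and the singular locus along the zero section, but the content is identical.
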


 \begin{proof}
 Let  $(t^1,\ldots, t^n)$ be the coordinates
 on $V$ dual to the basis $\{ e_1, \ldots, e_n \}$.
 Let $$\{ f_a(t^1, \ldots, t^n), 1 \leq a \leq k \}$$ be the homogeneous polynomials generating the ideal $I_Z$ defining the
 projective variety $Z \subset \BP V.$
   Since $\omega$ is adapted, the variety $\sC \subset \BP T(M)$ is defined as the zero locus given by
   $$f_a(\lambda^1, \ldots, \lambda^n) = 0,  \ 1 \leq a \leq k.$$ Then $$\gamma ( f_a)  = (\lambda^1 \frac{\partial}{\partial \vartheta^1} + \cdots + \lambda^n \frac{\partial}{\partial \vartheta_n}) f_a( \lambda^1, \ldots, \lambda^n) = 0.$$ Thus
  the vector field  $\gamma$ is tangent to $\sC$. \end{proof}

\begin{definition}\label{d.geodesic}
By Lemma \ref{l.easy} and  Proposition \ref{p.geodesic}, the image of the
vector field $\gamma$ in $\sC$ spans a foliation $\sF^{\omega}
\subset T(\sC)$ which is a conic connection for the cone structure
$\sC$. This conic connection $\sF^{\omega}$ is called the {\em
geodesic connection} of the adapted coframe $\omega$. For an alternate way
to define $\sF^{\omega}$, see Proposition 3.11 in \cite{Hw13}.
\end{definition}

\begin{definition}\label{d.adpated}
Let $\sC \subset \BP T(M)$ be a cone structure with a conic
connection $\sF \subset \sT \subset T(\sC)$. Then a coframe
$\omega$ on $M$ is said to be {\it adapted to the conic
connection} $\sF$ if it is adapted to $\sC$ and its geodesic
connection agrees with the given conic connection, i.e.,
$\sF^{\omega}= \sF$.
\end{definition}

\begin{definition}\label{d.Xi}
Given a projective submanifold
 $Z \subset \BP V$, define  \begin{eqnarray*} \Xi_Z & := & \{
\sigma: V \wedge V \to V, \  \sigma(u, v) \in T_u(\widehat{Z})  \mbox{
if } u \in \widehat{Z} \mbox{ and } v \in T_{u}(\widehat{Z}) \}\\
 \Xi_{V} & = & \{
\sigma: V \wedge V \to V, \; \sigma(u, v) \in \C u + \C v \mbox{
for all  } u, v \in V \}.\end{eqnarray*} Note that $\Xi_V \subseteq \Xi_Z$.  From Proposition 3.3 of
\cite{Hw10},  the subspace $\Xi_{V} \subset \Hom (V
\wedge V, V)$ is isomorphic to the dual space $V^{\vee}$ by the
natural contraction homomorphism $V^{\vee} \to \Hom(V \wedge V,
V)$.
\end{definition}

The following is Theorem 3.4 of \cite{Hw10}.

\begin{proposition}\label{p.theorem3.4}
A coframe $\omega $ on a manifold $M$ of dimension $\geq 3$ is
conformally closed if and only if the structure function
$\sigma^{\omega}$ takes values in $\Xi_{V} \subset \Hom(V
\wedge V, V)$. \end{proposition}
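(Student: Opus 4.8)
The plan is to convert the pointwise, tensorial condition ``$\sigma^{\omega}$ takes values in $\Xi_V$'' into a system of Cartan structure equations for the $1$-forms $\theta^i$, and then recognize conformal closedness as the integrability of those equations. First I would set up the dictionary. By the isomorphism $\Xi_V \cong V^{\vee}$ of Definition \ref{d.Xi}, the contraction sends $\xi \in V^{\vee}$ to the map $(u,v) \mapsto \xi(u)\, v - \xi(v)\, u$; hence $\sigma^{\omega}_x \in \Xi_V$ holds precisely when there is a covector $\xi_x \in V^{\vee}$ with $\sigma^{\omega}_x(e_j, e_k) = b_j e_k - b_k e_j$, where $b_l := \langle \xi_x, e_l \rangle$. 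Comparing with $\sigma^{\omega}(e_j,e_k) = \sum_i T^i_{jk} e_i$ and reading off $T^i_{jk} \propto b_j \delta^i_k - b_k \delta^i_j$, a short computation shows this is equivalent to the existence of a holomorphic $1$-form $\beta = \sum_l b_l \theta^l$ satisfying
\[ d\theta^i = \beta \wedge \theta^i \qquad (i = 1, \ldots, n). \]
Regularity of $\beta$ is automatic: $\sigma^{\omega}$ is holomorphic and valued in the linear subspace $\Xi_V \cong V^{\vee}$, so the associated $\xi$, and hence $\beta$, is holomorphic. This identity carries the entire content of the proposition once both implications are read through it.

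For the ``only if'' direction, I would suppose $\omega$ is conformally closed, so near each point there is a nonvanishing $f$ with $f\omega$ closed. Then $0 = d(f \theta^i) = df \wedge \theta^i + f\, d\theta^i$ gives $d\theta^i = \beta \wedge \theta^i$ with $\beta = -d\log f$; by the dictionary, $\sigma^{\omega}$ is valued in $\Xi_V$ on that neighborhood. Since the target condition is pointwise, it then holds on all of $M$.

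For the ``if'' direction, I would take the $1$-form $\beta$ with $d\theta^i = \beta \wedge \theta^i$ furnished by the dictionary. Differentiating and using $\beta \wedge \beta = 0$ yields $0 = d(d\theta^i) = d\beta \wedge \theta^i - \beta \wedge d\theta^i = d\beta \wedge \theta^i$ for every $i$. Here the hypothesis $\dim M \geq 3$ enters decisively: since $\theta^1, \ldots, \theta^n$ is a coframe, writing $d\beta = \sum_{j<k} c_{jk}\, \theta^j \wedge \theta^k$ and choosing, for each pair $j<k$, an index $i \notin \{j,k\}$ (possible exactly because $n \geq 3$), the vanishing of $d\beta \wedge \theta^i$ forces the coefficient $c_{jk}$ of the nonzero basis $3$-form on $\{i,j,k\}$ to vanish; hence $d\beta = 0$. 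By the (holomorphic) Poincar\'e lemma, on a simply connected neighborhood $\beta = dg$, and setting $f := e^{-g}$ gives $df = -f\beta$, whence $d(f\theta^i) = (df + f\beta)\wedge \theta^i = 0$. Thus $f\omega$ is closed and $\omega$ is conformally closed.

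The only genuinely substantive point is the linear-algebra translation of the first paragraph, converting membership in $\Xi_V$ into the structure equations $d\theta^i = \beta \wedge \theta^i$; everything afterward is formal. The dimension restriction $n \geq 3$ is not cosmetic but precisely what makes the implication ``$d\beta \wedge \theta^i = 0$ for all $i \Rightarrow d\beta = 0$'' work: for $n = 2$ every $3$-form vanishes identically, no constraint on $d\beta$ is produced, and the equivalence breaks down. I expect the main obstacle, such as it is, to be the bookkeeping of the antisymmetrization constants in the passage among $T^i_{jk}$, $\sigma^{\omega}$, and $\beta$, rather than any conceptual difficulty.
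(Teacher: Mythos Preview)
Your proof is correct. The paper itself does not give a proof of this proposition at all; it simply cites it as Theorem~3.4 of \cite{Hw10}. Your argument is a clean, self-contained proof: the translation of $\sigma^{\omega}\in\Xi_V$ into the structure equations $d\theta^i=\beta\wedge\theta^i$ via the identification $\Xi_V\cong V^{\vee}$ is exactly right (up to an immaterial factor of $2$ absorbed into $\beta$), and the two directions then follow formally, with the hypothesis $n\ge 3$ used precisely where you say---to force $d\beta=0$ from $d\beta\wedge\theta^i=0$ for all $i$. There is nothing to compare against in this paper, and nothing to fix in your argument.
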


The next theorem is a refinement of Proposition 5.6 in
\cite{Hw10}.

\begin{theorem}\label{t.connection}
Let $\sC \subset \BP T(M)$ be a $Z$-isotrivial cone structure and
let $\omega$ be an adapted coframe with the structure function
$\sigma^{\omega}$. If the geodesic connection $\sF^{\omega}$ is a
characteristic connection, then $\sigma^{\omega}$ takes values in
$\Xi_Z$.  \end{theorem}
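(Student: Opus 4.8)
The plan is to reduce the statement, via Lemma~\ref{l.gamma}, to a bracket identity for the geodesic flow $\gamma$ on $T(M)$, and then to transfer the characteristic-connection hypothesis from $\sC$ down to the affine cone $\widehat{\sC}\subset T(M)$ on which $\gamma$ actually lives. Recall that $\sigma^{\omega}$ taking values in $\Xi_Z$ means precisely that, under the identification $\omega_x\colon T_x(M)\xrightarrow{\sim}V$ (which carries $\widehat{\sC}_x$ to $\widehat{Z}$), one has $\sigma^{\omega}_x(\omega_x(u),\omega_x(v))\in T_{\omega_x(u)}(\widehat{Z})$ whenever $u\in\widehat{\sC}_x$ and $v\in T_u(\widehat{\sC}_x)$. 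Given such $u$ and $v$, I would choose a vertical vector field $\tilde v=\sum_i h_i(\lambda)\,\partial/\partial\lambda^i$ as in Lemma~\ref{l.gamma} whose value at $u$ equals $v$; since $\widehat{\sC}$ is cut out fibrewise by the equations $f_a(\lambda)=0$, one may in addition take $\tilde v$ tangent to $\widehat{\sC}$ (i.e.\ $\sum_i\partial_i f_a\,h_i\equiv 0$ along $\widehat{\sC}$), which is exactly the requirement $v\in T_u(\widehat{\sC}_x)$. Lemma~\ref{l.gamma} then gives $\omega_x^{-1}\sigma^{\omega}_x(\omega_x(u),\omega_x(v))=\tfrac12\,d\pi_u([[\tilde v,\gamma],\gamma]_u)$, so it suffices to prove
$$ d\pi_u\big([[\tilde v,\gamma],\gamma]_u\big)\in T_u(\widehat{\sC}_x). $$

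To organise this I would pull the flag $\sV\subset\sT\subset\sP\subset T(\sC)$ back along the fibrewise projectivization $\rho$ of $\widehat{\sC}$ (minus its zero section) onto $\sC$, so that $\varpi\circ\rho=\pi$, obtaining a flag $\widehat{\sV}\subset\widehat{\sT}\subset\widehat{\sP}\subset T(\widehat{\sC})$ of $\C^{\ast}$-invariant distributions. Concretely $\widehat{\sP}=\{X\in T(\widehat{\sC}):d\pi(X)\in T(\widehat{\sC}_x)\}=d\rho^{-1}(\sP)$, and it contains the Euler field $E$. The first bracket is then immediately under control: $\gamma$ is tangent to $\widehat{\sC}$ by Proposition~\ref{p.geodesic}, and the computation in the proof of Lemma~\ref{l.gamma} gives $[\tilde v,\gamma]=\sum_i h_i(\lambda)\,\partial/\partial\vartheta^i$, a horizontal field tangent to $\widehat{\sC}$ with $d\pi([\tilde v,\gamma])=v\in T_u(\widehat{\sC}_x)$; hence $[\tilde v,\gamma]\in\widehat{\sP}$. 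This is the cone version of the identity $\sP=[\sF,\sV]$ of Proposition~\ref{p.sP}. Thus the goal reduces to the single inclusion $[\widehat{\sP},\gamma]\subset\widehat{\sP}$, applied to $\eta=[\tilde v,\gamma]$.

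The heart of the matter is to deduce $[\widehat{\sP},\gamma]\subset\widehat{\sP}$ from the hypothesis $[\sP,\sF^{\omega}]\subset\sP$. I would argue as follows. Fix a local generator $W$ of $\sF^{\omega}$ and its weight-$0$ ($\C^{\ast}$-invariant) lift $\widehat W$ on $\widehat{\sC}$; since $\gamma$ and $\widehat W$ have the same image direction under $d\rho$ while $\ker d\rho=\C E$, one may write $\gamma=\psi\,\widehat W+\chi\,E$ for holomorphic functions $\psi,\chi$ (with $\psi$ of weight $1$). For a $\C^{\ast}$-invariant section $\xi$ of $\widehat{\sP}$, set $\bar\xi=d\rho(\xi)\in\sP$; then
$$ [\xi,\gamma]=\psi\,[\xi,\widehat W]+(\xi\psi)\,\widehat W+(\xi\chi)\,E-\chi\,[E,\xi]. $$
Here $[E,\xi]=0$ as $\xi$ is invariant; $\widehat W\in\widehat{\sP}$ and $E\in\widehat{\sV}\subset\widehat{\sP}$; and, crucially, $[\xi,\widehat W]$ is $\rho$-related to $[\bar\xi,W]\in\sP$ by the characteristic hypothesis, whence $[\xi,\widehat W]\in d\rho^{-1}(\sP)=\widehat{\sP}$. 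So every term lies in $\widehat{\sP}$, and extending from invariant sections to arbitrary ones by the Leibniz rule yields $[\widehat{\sP},\gamma]\subset\widehat{\sP}$. Combining, $[[\tilde v,\gamma],\gamma]_u\in\widehat{\sP}_u$, so $d\pi_u([[\tilde v,\gamma],\gamma]_u)=d\varpi\big(d\rho([[\tilde v,\gamma],\gamma]_u)\big)\in d\varpi(\sP)\subset T_u(\widehat{\sC}_x)$, as required.

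The step I expect to be the main obstacle is exactly this transfer of the characteristic condition from $\sC$ to $\widehat{\sC}$. The geodesic flow $\gamma$ lives on the total space and is homogeneous of weight $1$, hence is \emph{not} $\rho$-projectable, so one cannot simply push the Lie bracket forward; this forces the introduction of the invariant lift $\widehat W$ and the decomposition $\gamma=\psi\widehat W+\chi E$. Moreover, plain tangency to $\widehat{\sC}$ is vacuous for the horizontal double bracket—every horizontal vector is tangent to the cone—so the argument must genuinely exploit the finer distribution $\widehat{\sP}$ together with the Euler field, and the bookkeeping needed to verify that the weight-$1$ correction terms $(\xi\psi)\widehat W$ and $(\xi\chi)E$ all fall back into $\widehat{\sP}$ is where the real care is required.
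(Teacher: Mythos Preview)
Your proof is correct and follows the same line as the paper's: reduce via Lemma~\ref{l.gamma} to showing $d\pi_u([[\tilde v,\gamma],\gamma]_u)\in T_u(\widehat{\sC}_x)$, then invoke the characteristic hypothesis. The paper asserts in a single sentence that ``since $\sF^{\omega}$ is a characteristic connection, the local vector field $[[\tilde v,\gamma],\gamma]$ is a section of $\sP$,'' whereas you supply the careful transfer argument (the invariant lift $\widehat W$ and the decomposition $\gamma=\psi\,\widehat W+\chi E$ on $\widehat{\sC}$) that makes this step rigorous; your observation that $\gamma$ is homogeneous of weight~$1$ and hence not $\rho$-projectable is exactly why this justification is needed.
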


\begin{proof} Denoting by $0_M \subset T(M)$ the zero section, let $\psi: \hat{\sC} \setminus 0_M \to \sC$ be the natural projection.
The geodesic flow $\gamma$ of Proposition \ref{p.geodesic}
satisfies $d \psi(\gamma) \subset \sF^{\omega}$ by Definition
\ref{d.geodesic}.

As the coframe is adapted to $\sC $, for a
point $u \in \hat{\sC}_x$ and a vector $v \in T_{u}(\hat{\sC_x})$
satisfying $d \psi(v) \neq 0$, we can choose
a local vector field $\tilde{v}$   in a neighborhood of $u$ in $\hat{\sC}$, which has the
  the form $$\tilde{v} = h_1(\lambda) \frac{\partial}{\partial \lambda^1} + \cdots + h_n(\lambda) \frac{\partial}{\partial \lambda^n}.$$ Thus we can apply Lemma \ref{l.gamma}.
 to see that
$$\sigma_x^{\omega}(u,v) = d \pi_u([[ \tilde{v}, \gamma],
\gamma]_u).$$
Since $\sF^{\omega}$ is a characteristic connection,
the local vector field $[[ \tilde{v}, \gamma],
\gamma]$ is a section of $\sP$.  It follows that
$\sigma_x^{\omega}(u,v)$ has value in $T_u(\widehat{Z})$ modulo $\C u$. Thus
$\sigma^{\omega}$ takes values in $\Xi_Z$.
\end{proof}

\begin{corollary}\label{c.admissible}
Let $Z \subset \BP V$ be a submanifold with $\Xi_Z= \Xi_{
V}$.  Let $M$ be a complex manifold of dimension $\geq 3$ and let $\sC \subset \BP T(M)$ be a $Z$-isotrivial cone
structure with a characteristic connection $\sF$. Then $\sC$ is
locally flat if at least one of the following two conditions holds.
\begin{itemize}  \item[(1)] There
exists a coframe adapted to the connection $\sF$.   \item[(2)] $H^0(Z, T(Z) \otimes \sO(1)) =0$.\end{itemize} \end{corollary}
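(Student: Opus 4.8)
The plan is to show that either hypothesis forces the existence, near each point of $M$, of a \emph{conformally closed} adapted coframe; local flatness then follows immediately from Proposition \ref{p.4.5}.

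First I would dispose of case (1). Suppose $\omega$ is a coframe adapted to the connection $\sF$. By Definition \ref{d.adpated} this means $\omega$ is adapted to $\sC$ and its geodesic connection satisfies $\sF^{\omega} = \sF$. Since $\sF$ is a characteristic connection by hypothesis, so is $\sF^{\omega}$, and Theorem \ref{t.connection} then tells us that the structure function $\sigma^{\omega}$ takes values in $\Xi_Z$. The hypothesis $\Xi_Z = \Xi_V$ upgrades this to the statement that $\sigma^{\omega}$ takes values in $\Xi_V$. As $\dim M \geq 3$, Proposition \ref{p.theorem3.4} now yields that $\omega$ is conformally closed, whence $\sC$ is locally flat by Proposition \ref{p.4.5}. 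This chain is essentially mechanical once the adapted coframe is in hand; the only structural inputs are Theorem \ref{t.connection}, the equality $\Xi_Z=\Xi_V$, and the dimension hypothesis feeding Proposition \ref{p.theorem3.4}.

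The substance is therefore to show that condition (2) produces such a coframe, i.e.\ that (2) implies (1) locally. Over a sufficiently small open subset, Definition \ref{d.iso} furnishes \emph{some} coframe $\omega$ adapted to $\sC$, and by Definition \ref{d.geodesic} its geodesic connection $\sF^{\omega}$ is a conic connection for $\sC$. The given $\sF$ is also a conic connection, since every characteristic connection is one by Definition \ref{d.characteristic}. Here the vanishing $H^0(Z, T(Z)\otimes \sO(1)) = 0$ enters: by Proposition \ref{p.5.2} the conic connection on $\sC$ is \emph{unique}, so necessarily $\sF^{\omega} = \sF$. Hence $\omega$ is adapted to $\sF$ in the sense of Definition \ref{d.adpated}, condition (1) is met, and we conclude as in the previous paragraph.

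The only point demanding care is the compatibility of the various local constructions: the adapted coframe of Definition \ref{d.iso}, the uniqueness statement of Proposition \ref{p.5.2}, and the given connection $\sF$ must all be arranged over a common shrinking neighborhood, and one should confirm that the uniqueness in Proposition \ref{p.5.2} applies to $\sF$ itself (which it does, $\sF$ being conic). I do not expect a genuine obstacle beyond this bookkeeping; once the equality $\sF^{\omega}=\sF$ is secured, the remainder is the formal consequence chain $\Xi_Z=\Xi_V \Rightarrow$ conformal closedness $\Rightarrow$ local flatness.
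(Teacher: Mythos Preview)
Your proof is correct and follows essentially the same argument as the paper: for (1) you invoke Theorem \ref{t.connection}, the hypothesis $\Xi_Z=\Xi_V$, Proposition \ref{p.theorem3.4}, and Proposition \ref{p.4.5} in sequence; for (2) you use Proposition \ref{p.5.2} to force $\sF^{\omega}=\sF$ for any adapted coframe, reducing to (1). The paper's proof is the same chain, stated more tersely.
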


\begin{proof}
Assume that (1) holds. Then the geodesic connection of the adapted coframe
coincides with the characteristic connection $\sF$. Thus $\sC$ is locally flat
from Proposition \ref{p.4.5}, Proposition \ref{p.theorem3.4} and
Theorem \ref{t.connection}.

Assume that (2) holds. Then a $Z$-isotrivial cone
structure has a unique conic connection from Proposition \ref{p.5.2}.
Thus any coframe adapted to $\sC$ is adapted to the
 connection $\sF$ and the condition (1) is satisfied.
\end{proof}

When the smooth cone structure is a VMRT-structure, the existence of a characteristic connection is automatic by the following result of
Proposition 6.1 \cite{Hw10}, which is a reformulation of Proposition 3.1.2 of \cite{HM99}.

\begin{proposition}\label{p.6.1}
Let $X$ be a uniruled projective manifold and let $\sC \subset \BP T(X)$ be the VMRT-structure defined by a family $\sK$ of minimal rational curves on $X$. Assume that there exists an open subset $M \subset X$ such that the restriction $\sC|_M \subset \BP T(M)$ is a smooth cone structure. Then it has a characteristic connection given by tangent vectors of members of $\sK$. \end{proposition}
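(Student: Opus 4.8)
The plan is to produce the conic connection $\sF$ directly from the minimal rational curves and then to verify the bracket condition of Definition \ref{d.characteristic} by reducing it to the deformation theory of a general (standard) member of $\sK$. Let $\sU \to \sK$ be the universal family of the chosen minimal component and $\mu\colon \sU \to X$ the evaluation map, so that the tangent map $\tau\colon \sU \dashrightarrow \BP T(X)$, sending a smooth point of a member to its tangent direction, has image $\sC$. Each member $C\in\sK$ lifts tautologically to a curve $\widehat{C}\subset\sC$ whose point over $y\in C$ is the tangent direction $[T_y(C)]$. By construction the tangent line to $\widehat{C}$ at a point $\alpha$ projects under $\varpi$ onto $\widehat{\alpha}\subset T_{\varpi(\alpha)}(X)$; hence it lies in $\sT$ and meets $\sV$ trivially. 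Over the open locus $M$ where $\sC|_M$ is a smooth cone structure these tangent lines assemble into a line subbundle $\sF\subset\sT$ with $\sF\cap\sV=0$, that is, a conic connection, whose leaves are the lifts $\widehat{C}$.

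To verify that $\sF$ is characteristic it suffices to check the containment $[\sP,\sF]\subset\sP$ on a dense open subset of $\sC$, since $\sF$ and $\sP$ are holomorphic subbundles of $T(\sC)$ and the condition is closed under holomorphic specialization. I would therefore work at a general $\alpha\in\sC$, where the member $C$ with tangent direction $\alpha$ is standard, so that for its normalization $f\colon\BP^1\to X$ one has
$$ f^*T(X)\;\cong\;\sO(2)\,\oplus\,\sO(1)^{\oplus p}\,\oplus\,\sO^{\oplus(n-1-p)},\qquad p=\dim\sC_x,\ n=\dim X, $$
with the $\sO(2)$-summand equal to the image of $df$. Pulling the three distributions back along $\widehat{C}$, I would identify $\sV|_{\widehat{C}}$ with the relative tangent of $\varpi$, the line $\sT/\sV$ with the $\sO(2)$-summand (the tautological direction $\widehat{\alpha}$), and, crucially, the positive part $\sO(2)\oplus\sO(1)^{\oplus p}$ with the affine tangent spaces $T_{u_y}(\widehat{\sC}_y)$ of the VMRT fibers along $C$, so that $\sP|_{\widehat{C}}$ is precisely $\mathrm{d}\varpi^{-1}$ of this positive subbundle. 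Granting this, the characteristic property is the statement that the flow along the leaves $\widehat{C}$, which is the $\sF$-flow, preserves the positive subbundle of $f^*T(X)$, and hence preserves $\sP$, i.e. $[\sP,\sF]\subset\sP$.

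It is worth noting that the bracket condition cannot be obtained by formal manipulation alone. Using Proposition \ref{p.sP} one may write $\sP=\sV+\sF+[\sF,\sV]$ with $\sT=\sF\oplus\sV$, and then $[\sV,\sF]=-[\sF,\sV]\subset\sP$ and $[\sF,\sF]\subset\sF\subset\sP$ are immediate. The remaining term $[\,[\sF,\sV],\sF\,]$, however, reduces via the Jacobi identity to $[\sF,[\sF,\sV]]$ modulo terms already in $\sP$, and this is exactly $[\sF,\sP]$ again; so the Jacobi identity does not close the computation, and genuine geometric input is required.

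That input, the identification in the middle paragraph, is where I expect the main difficulty to lie: one must show that the affine tangent spaces to the VMRT fibers along a standard minimal rational curve fill out exactly the positive part of $f^*T(X)$, and that this positive subbundle is preserved along $\widehat{C}$. This is the deformation-theoretic heart of the matter, and it is precisely where the minimality of the rational curves, hence their standardness and the prescribed splitting type of $f^*T(X)$, enters. It is exactly the content of Proposition 3.1.2 of \cite{HM99}, which I would invoke for this step; everything else in the argument is formal.
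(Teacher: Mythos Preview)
Your proposal is correct and takes essentially the same approach as the paper: the paper gives no independent argument but simply states that the proposition is Proposition~6.1 of \cite{Hw10}, which is in turn a reformulation of Proposition~3.1.2 of \cite{HM99}---precisely the result you invoke for the key deformation-theoretic step. Your write-up is more expansive, spelling out the construction of $\sF$, the reduction to a general standard curve, and why the Jacobi identity alone does not close the computation, but the substance is the same citation.
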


By Proposition \ref{p.6.1} and Corollary \ref{c.admissible}, we have
\begin{corollary} \label{c.criteria}
Let $Z \subset \BP V$ be a smooth irreducible subvariety of positive dimension. Assume that
$H^0(Z, T(Z) \otimes \sO_Z(1)) = 0$ and $\Xi_Z = \Xi_V$.  Then any VMRT-structure on a uniruled projective manifold which is isotrivial of type
$Z \subset \BP V$ is locally flat.
\end{corollary}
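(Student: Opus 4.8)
The plan is to obtain Corollary~\ref{c.criteria} as a direct synthesis of Proposition~\ref{p.6.1} and Corollary~\ref{c.admissible}, so that all the genuine analytic content is already packaged in those two statements. First I would pass from the global VMRT-structure $\sC \subset \BP T(X)$ to a local cone structure. Since $\sC$ is isotrivial of type $Z \subset \BP V$, the fiber $\sC_x$ over a general point $x$ is isomorphic to the smooth irreducible variety $Z$; hence the loci where $\varpi\colon \sC \to X$ is smooth with irreducible fibers contain a non-empty open subset $M \subset X$. Over such an $M$ the restriction $\sC|_M \subset \BP T(M)$ satisfies Definition~\ref{d.cone} and is a bona fide $Z$-isotrivial smooth cone structure.

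Next I would supply a characteristic connection. Because $\sC$ is the VMRT-structure of a family $\sK$ of minimal rational curves, Proposition~\ref{p.6.1} guarantees that $\sC|_M$ carries a characteristic connection $\sF$, namely the one spanned by the tangent directions of members of $\sK$. At this stage the situation matches the hypotheses of Corollary~\ref{c.admissible} verbatim: $Z$ is a submanifold with $\Xi_Z = \Xi_V$ by assumption, $M$ is a complex manifold, and $\sC|_M$ is a $Z$-isotrivial cone structure equipped with the characteristic connection $\sF$. The one numerical hypothesis of Corollary~\ref{c.admissible} that must be verified separately is $\dim M = \dim V \geq 3$, and this follows from the assumption $H^0(Z, T(Z)\otimes \sO_Z(1)) = 0$ itself: if $\dim V \leq 2$, then the only positive-dimensional smooth irreducible subvariety of $\BP V$ is $Z = \BP^1$, for which $T(Z)\otimes \sO_Z(1) \cong \sO_{\BP^1}(3)$ has nonzero sections, a contradiction.

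It then remains to invoke condition~(2) of Corollary~\ref{c.admissible}, which holds because $H^0(Z, T(Z)\otimes \sO_Z(1)) = 0$ is assumed. Internally this works because Proposition~\ref{p.5.2} furnishes a unique conic connection on $\sC|_M$, which must therefore coincide with the characteristic connection $\sF$, so that every coframe adapted to $\sC|_M$ is automatically adapted to $\sF$ and condition~(1) is met as well. Corollary~\ref{c.admissible} thus yields that $\sC|_M$ is locally flat, which by Definition~\ref{d.flat} is exactly the conclusion that the VMRT-structure is locally flat. I do not expect any real obstacle in this corollary: the only points requiring care are the existence of the open set $M$ over which the fibers are smooth and irreducible (where one leans on generic smoothness together with isotriviality) and the elementary dimension bound above. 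The substantive difficulty of the whole program---verifying $H^0(Z, T(Z)\otimes \sO_Z(1)) = 0$ and $\Xi_Z = \Xi_V$ for the complete intersections $Z$ of Theorem~\ref{t.Main}---is not part of this corollary and is deferred to the later sections.
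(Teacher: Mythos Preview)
Your proposal is correct and follows exactly the paper's route: the paper deduces Corollary~\ref{c.criteria} in one line from Proposition~\ref{p.6.1} and Corollary~\ref{c.admissible}, and you have simply unpacked that deduction with care, including the (implicit in the paper) verification that $\dim V \geq 3$.
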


\section{Outline of the proof of Theorem \ref{t.Main}}

\begin{definition}\label{d.Xi2} Let $Z \subset \BP V$ be a nonsingular projective variety.
We will denote by $\Omega_Z (= \Omega^1_Z)$ the cotangent bundle of $Z$.
For a vector $v \in T_{\alpha}(\widehat{Z})$ in Notation \ref{n}, denote  by $[v] \in T_{\alpha}(\widehat{Z})/\widehat{\alpha}$ its class modulo $\widehat{\alpha}.$ By the canonical identification $T_\alpha(Z) = {\rm Hom}(\widehat{\alpha}, T_\alpha(\widehat{Z})/\widehat{\alpha})$,
the fiber of $T_Z \otimes \sO_Z(-1)$ at $\alpha$ is identified with $T_\alpha(\widehat{Z})/\widehat{\alpha}$.

 \begin{itemize}
\item[(i)] Note that the fiber of $T_Z \otimes \Omega_Z\otimes \sO(1)$ at a
point $\alpha \in Z$ is naturally isomorphic to

 $${\rm
Hom}\left( \widehat{\alpha} \otimes (T_{\alpha}(\widehat{Z})/\widehat{\alpha}), \   T_{\alpha}(\widehat{Z})/\widehat{\alpha}) \right).$$      For an element  $\sigma \in \Xi_Z$, define  $\zeta(\sigma) \in H^0(Z, T_Z \otimes \Omega_Z\otimes \sO(1))$
such that its value at $\alpha \in Z,$
$$\zeta(\sigma)_{\alpha} \in {\rm
Hom}\left(\widehat{\alpha} \otimes (T_{\alpha}(\widehat{Z})/\widehat{\alpha}), \  T_{\alpha}(\widehat{Z})/\widehat{\alpha}\right),$$
satisfies $\zeta(\sigma)_{\alpha} (u\otimes  [v]) = [\sigma(u, v)] $ for any $u \in \widehat{\alpha}$ and $v \in T_{\alpha}(\widehat{Z})$.
This defines  a homomorphism
$$
\zeta: \Xi_Z \to H^0(Z, T_Z \otimes \Omega_Z\otimes \sO(1)).
$$
 Define $\Xi'_Z:= {\rm Ker}(\zeta)$, i.e.,  $$ \Xi'_Z :=
\{ \sigma \in \Hom( V \wedge V, V), \ \sigma(\widehat{\alpha},v) \subset \widehat{\alpha} \mbox{ if } \widehat{\alpha} \subset  \widehat{Z} \mbox{ and } v \in T_{\alpha}(\widehat{Z})\}.$$
\item[(ii)] Note that  the fiber of $\Omega_Z \otimes \sO(1)$ at a point
$\alpha \in Z$ is identified with ${\rm Hom}(T_{\alpha}(\widehat{Z})/\widehat{\alpha}, \C)$.
For $\sigma \in \Xi'_Z$, let $\eta_{\sigma} \in H^0(Z, \Omega_Z \otimes \sO(1))$ be the section whose value at $\alpha \in Z$,
$$\eta_{\sigma, \alpha} \in {\rm Hom}(T_{\alpha}(\widehat{Z})/\widehat{\alpha}, \C)$$
satisfies $\eta_{\sigma, \alpha}([v]) u = \sigma (u, v)$ for any $u \in \widehat{\alpha}$ and  $v \in T_{\alpha}(\widehat{Z})$. This defines a homomorphism
$$
\eta: \Xi'_Z \to H^0(Z, \Omega_Z \otimes \sO(1)).
$$
Define $\Xi^{0}_Z :=
{\rm Ker}(\eta)$, i.e., $$\Xi^{0}_Z = \{ \sigma \in \Hom( V \wedge V, V), \ \sigma(u,v) =0 \mbox{ if } u \in \widehat{Z} \mbox{ and } v \in T_u(\widehat{Z})\}.$$
Note that $\Xi^{0}_Z =0$ if and only if $Z$ is tangentially nondegenerate, i.e., the variety ${\rm TanLines}(Z) \subset \BP (\wedge^2 V)$ of tangent lines to $Z \subset \BP V$ is nondegenerate in $\BP(\wedge^2 V)$.
 \end{itemize} \end{definition}

%The proof of  the following results on  complete intersections will be given in Sections \ref{s.(i)} -- \ref{s.(v)}.

\begin{theorem}\label{t.cohomology}
Let $Z \subset \BP V$ be a positive-dimensional nonsingular nondegenerate complete intersection. Then \begin{itemize}
%\item[(I)] $H^0(Z, T_Z \otimes \Omega_Z(1)) \cong H^0(Z, \sO(1)) = V^{\vee}$.
\item[(I)] $H^0(Z, \Omega_Z(1)) =0$ unless $Z$ is a curve.
\item[(II)] $H^0(Z, T_Z(1)) \neq 0$ if and only if
\begin{itemize}
\item[(II-a)]  either $Z$ is a curve whose multi-degree is one of the following
$$[2], [3], [4], [2,2], [2,3], [2,2,2].$$
\item[(II-b)] or $\dim Z \geq 2$, whose multi-degree is one of the following
$$[2], [3], [2, 2]$$
\end{itemize}
\item[(III)] $\Xi_Z^{0} =0$, i.e., $Z$ is tangentially nondegenerate.
 \item[(IV)] $\Xi'_Z \neq 0$ if and only if $Z$ is a plane conic.  \end{itemize}
\end{theorem}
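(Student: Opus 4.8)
The plan is to run all four parts through the three standard exact sequences of a complete intersection $Z\subset\BP V$ of dimension $n$, codimension $c$ and multi-degree $[m_1,\ldots,m_c]$: the restricted Euler sequence, the normal sequence
$$0\to T_Z\to T_{\BP V}|_Z\to\bigoplus_{a=1}^{c}\sO_Z(m_a)\to0,$$
and the conormal sequence $0\to\bigoplus_a\sO_Z(-m_a)\to\Omega_{\BP V}|_Z\to\Omega_Z\to0$. Two facts are used repeatedly: nondegeneracy forces every $m_a\geq2$ (a degree-one equation would put $Z$ in a hyperplane), and a complete intersection is projectively normal with vanishing intermediate cohomology, so $H^0(Z,\sO_Z(t))$ is the degree-$t$ piece $R_t$ of the coordinate ring $R=\C[x_0,\ldots,x_N]/(f_1,\ldots,f_c)$, while $H^i(Z,\sO_Z(t))=0$ for $0<i<n$; moreover the affine cone $\widehat Z$ is normal for $n\geq1$. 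For (I) I would twist the conormal sequence by $\sO_Z(1)$. The restricted dual Euler sequence gives $H^0(Z,\Omega_{\BP V}(1)|_Z)=\ker(V^*\to R_1)=0$ by nondegeneracy, so $H^0(Z,\Omega_Z(1))$ injects into $\bigoplus_a H^1(Z,\sO_Z(1-m_a))$, which vanishes once $n\geq2$ by intermediate cohomology vanishing.

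For (II), the curve case is clean: $T_Z(1)=\omega_Z^{-1}(1)$ is a line bundle of degree $d(c+3-\sum_i m_i)$ by adjunction (with $d=\prod_i m_i$), and a Riemann--Roch computation---together with the observation that $T_Z(1)\cong\sO_Z$ exactly when $\sum_i m_i=c+3$---shows $H^0(Z,T_Z(1))\neq0$ precisely when $\sum_i m_i\leq c+3$, which is exactly the list in (II-a). For $n\geq2$ I would twist the normal sequence by $\sO_Z(1)$ and use the Euler sequence to identify
$$H^0(Z,T_Z(1))\cong\ker\Big(R_2^{\oplus(N+1)}/\langle\mathrm{Euler}\rangle\ \to\ \bigoplus_a R_{m_a+1},\ (g_i)\mapsto\big(\textstyle\sum_i g_i\,\partial f_a/\partial x_i\big)_a\Big),$$
and show by a dimension count that this Jacobian map is injective outside $[2],[3],[2,2]$. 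This last step is expected to be routine but computational, and is the least novel part.

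The substance lies in (III) and (IV), where I would exploit a skew-versus-symmetric clash. For (III), a nonzero element of $\Xi_Z^{0}$ is a nonzero skew form $\phi\in\wedge^2V^*$ whose contraction $\phi(u,\cdot)$ annihilates $T_u(\widehat Z)$ for every $u\in\widehat Z$; hence $\phi(u,\cdot)$ lies in the conormal span $\langle df_1(u),\ldots,df_c(u)\rangle$. Writing $\phi(u,\cdot)=\sum_a c_a(u)\,df_a(u)$, each $c_a$ is regular on the smooth locus $\widehat Z\setminus\{0\}$ (the gradients are independent there), so it extends by normality to an element of $R_{2-m_a}$; since every $m_a\geq2$, only the quadric indices $m_a=2$ can contribute, and then $c_a$ is a constant. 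This produces an identity $\phi=\sum_{m_a=2}c_a\,df_a$ of linear maps $V\to V^*$ whose left side is skew and right side symmetric, forcing $\phi=0$; thus $\Xi_Z^{0}=0$.

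For (IV) the same engine is used one degree higher. When $n\geq2$, since $H^0(Z,\Omega_Z(1))=0$ by (I) the map $\eta$ is zero, so $\Xi_Z'=\ker\eta=\Xi_Z^{0}=0$ by (III), and no higher-dimensional example occurs. For a curve, (III) still makes $\eta\colon\Xi_Z'\hookrightarrow H^0(Z,\Omega_Z(1))$ injective, but now the target is nonzero, so I would analyze $\Xi_Z'$ directly: expanding the defining condition $\sigma(u,v)\wedge u=0$ (for $u\in\widehat Z$, $v\in T_u\widehat Z$) into its $2\times2$ minors and using conormal membership as above shows the relevant coefficients lie in $R_{3-m_a}$, hence vanish unless $m_a\leq3$. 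When $m_1\geq4$ every coefficient dies and the skew-symmetry of $\sigma$ then forces $\sigma=0$; the plane conic, by contrast, genuinely has $\Xi_Z'\cong\C$, which I would exhibit using the identification $\wedge^2\C^3\cong\C^3$ under which $u\times v$ is proportional to $df_u$ along the conic, reducing the condition to a single eigenvector equation. I expect the main obstacle to be the bookkeeping in the remaining low-degree curve cases, where some $m_a\in\{2,3\}$ allows nonzero minor coefficients: there one must track the interaction of these coefficients with the skew-symmetry of $\sigma$ carefully enough to conclude that, among all positive-dimensional complete intersections, only the plane conic has $\Xi_Z'\neq0$.
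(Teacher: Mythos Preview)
Your treatment of (I), of (II) for curves, and of (IV) for $\dim Z\geq 2$ matches the paper's. In (III) you misstate what $\Xi_Z^{0}$ is: its elements are maps $\sigma\colon\wedge^2V\to V$, not scalar forms in $\wedge^2V^*$. The fix is exactly what the paper does---compose with a linear functional $\delta\in V^*$ to get $\phi=\delta\circ\sigma\in\wedge^2V^*$ and run your conormal-plus-skew/symmetric argument on that; this is the paper's map $\lambda_\delta$ and its Lemma~\ref{l.tan}. With that correction your (III) is essentially the paper's argument, phrased more directly.

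For (II) with $\dim Z\geq 2$ your Jacobian-kernel formulation is correct but the promised ``dimension count'' is not carried out, and you do not address the nonvanishing direction for $[2],[3],[2,2]$. The paper bypasses all of this by writing $T_Z(1)\cong\Omega_Z^{\,n-1}(N{+}2{-}m)$ and invoking Br\"uckmann's vanishing $H^0(Z,\Omega_Z^r(p))=0$ for $p\leq r$ (and his nonvanishing $H^0(Z,\Omega_Z^r(r{+}1))\neq 0$ for the exceptional multi-degrees). This is both shorter and handles both directions at once; your approach would presumably work but is strictly more laborious.

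The real gap is (IV) for curves. Your argument only disposes of the case $\min_a m_a\geq 4$; you then relegate all curves with some $m_a\in\{2,3\}$ to ``bookkeeping''. But this residual class contains infinitely many multi-degrees (e.g.\ $[2,k]$, $[3,k]$, $[2,2,k]$, $[3,3,\ldots]$, \ldots), and once a cubic equation is present your skew/symmetric clash no longer applies: for a cubic $f$ the bilinear form $(u,v)\mapsto df_u(v)$ is neither symmetric nor skew, so the coefficients you extract cannot be killed by a parity argument. There is no evident finite case-check that closes this. The paper's proof here is genuinely different and uniform: it embeds $\eta(\Xi'_Z)\subset H^0(Z,\Omega_Z(1))$ into a commutative diagram built from the conormal sequence in degrees $1$ and $2$, uses Serre duality to rewrite the connecting map $\tau_1$ with target $\bigoplus_i H^0(Z,\Omega_Z(m_i{-}1))^{\vee}$, shows via an explicit $2$-form $\omega^{\sigma,s}\in\wedge^2V^*$ that $\iota_0(\eta_\sigma\otimes s)$ lifts to $H^0(\Omega_{\BP V}(2)|_Z)$, and then invokes the surjectivity of multiplication $H^0(\Omega_Z(m_i{-}2))\otimes V^*\to H^0(\Omega_Z(m_i{-}1))$ (valid for every non-conic complete-intersection curve by projective normality) to force $\tau_1(\eta_\sigma)=0$ and hence $\sigma=0$. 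This argument needs no restriction on the $m_i$ beyond excluding the plane conic; you should either adopt it or supply a mechanism that handles the low-degree curve cases systematically.
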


Now Theorem \ref{t.Main} follows from Corollary \ref{c.criteria}, Theorem \ref{t.cohomology} (II) and the following result:

\begin{theorem} \label{t.Main2}
Let $Z \subset \BP V$ be a smooth non-degenerate complete intersection.
Assume further that $Z$ satisfies one of the following:

(i) $Z$ is a  curve  of degree $\geq 3$;

(ii) $Z \subset \BP V$ is covered by lines and not a quadric hypersurface;

(iii) $Z$ is contained in a smooth hypersurface of degree $d \geq 3$ and its multi-degree
 $[m_1, \cdots, m_c]$ satisfies $d= m_1 < d+2 \leq m_2 \leq \cdots \leq m_c$.

Then  $\Xi_Z  = \Xi_V$.
\end{theorem}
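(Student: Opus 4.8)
The plan is to collapse the equality $\Xi_Z = \Xi_V$ into a single cohomological dimension count and then to perform that count with the exact sequences of a complete intersection. Since $\Xi_V \subseteq \Xi_Z$ always and $\Xi_V \cong V^\vee$ has dimension $\dim V$, it is enough to bound $\dim \Xi_Z$ from above. First I would check that in each of the cases (i)--(iii) the variety $Z$ is not a plane conic: in (i) its degree is $\ge 3$; in (ii) it is not a quadric hypersurface, and a smooth conic is a quadric hypersurface (and moreover carries no lines); in (iii) every $m_i \ge m_1 = d \ge 3$. By Theorem \ref{t.cohomology} (IV) this forces $\Xi'_Z = \Ker(\zeta) = 0$, so that
$$\zeta : \Xi_Z \longrightarrow H^0(Z,\, T_Z \otimes \Omega_Z \otimes \sO_Z(1))$$
is injective. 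Hence $\dim V = \dim \Xi_V \le \dim \Xi_Z \le h^0(Z, T_Z \otimes \Omega_Z(1))$, and the theorem reduces to the assertion
$$h^0(Z,\, T_Z \otimes \Omega_Z \otimes \sO_Z(1)) = \dim V,$$
for then the chain of inequalities collapses and the inclusion $\Xi_V \subseteq \Xi_Z$ becomes an equality.

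To evaluate this $h^0$ I would use the trace decomposition $T_Z \otimes \Omega_Z = \End(T_Z) = \sO_Z \oplus \End_0(T_Z)$ (valid in characteristic $0$), which splits off exactly the ``scalar'' sections $\xi \cdot \mathrm{id}_{T_Z}$ responsible for $\zeta(\Xi_V)$, giving
$$h^0(Z, T_Z \otimes \Omega_Z(1)) = h^0(Z, \sO_Z(1)) + h^0(Z, \End_0(T_Z)\otimes \sO_Z(1)).$$
Because a complete intersection is projectively normal and $Z$ is non-degenerate, $h^0(Z,\sO_Z(1)) = \dim V$; so the goal becomes the vanishing $H^0(Z, \End_0(T_Z)\otimes \sO_Z(1)) = 0$. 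When $Z$ is a curve (case (i)) there is nothing left to prove, since $\End_0(T_Z) = 0$. For $\dim Z \ge 2$ I would feed $T_Z$ and $\Omega_Z$ into the normal and conormal sequences $0 \to T_Z \to T_{\BP V}|_Z \to N_Z \to 0$ and $0 \to N_Z^\vee \to \Omega_{\BP V}|_Z \to \Omega_Z \to 0$, with $N_Z = \bigoplus_i \sO_Z(m_i)$, and combine them with the restricted Euler sequence on $\BP V$. Tensoring the conormal sequence by $T_Z(1)$ reduces $H^0(Z, T_Z \otimes \Omega_Z(1))$ to the ambient term $H^0(Z, T_Z \otimes \Omega_{\BP V}|_Z(1))$, the summands $H^0(Z, T_Z(1-m_i))$, and a connecting map into $\bigoplus_i H^1(Z, T_Z(1-m_i))$; the Euler and normal-bundle sequences then rewrite each of these in terms of the line-bundle groups $H^j(Z, \sO_Z(k))$, which on a complete intersection are controlled by Bott vanishing on $\BP V$ together with the Koszul complex cutting out $Z$.

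The degree hypotheses of (ii) and (iii) are used precisely to kill the intermediate groups in this reduction --- principally $H^0(Z, T_Z(1-m_i))$ and $H^1(Z, T_Z \otimes N_Z^\vee(1))$ --- so that only the scalar sections of $\End(T_Z)\otimes \sO_Z(1)$ survive: being covered by lines in (ii) bounds $\sum m_i$ against $\dim V$ (and alternatively lets one argue directly along the lines through a general point, where the membership $\sigma(u,v)\in T_u(\widehat Z)$ pins $\sigma$ into $\C u + \C v$), while the gap condition $d = m_1 < d+2 \le m_2 \le \cdots \le m_c$ in (iii) separates $m_1$ from the remaining $m_i$ enough to force the relevant twists of $T_Z$ to be acyclic in the needed range. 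I expect the main obstacle to be exactly this bookkeeping: one must establish a package of vanishing statements for $H^0$ and $H^1$ of twisted tangent bundles $T_Z(k)$ of complete intersections across the sharp degree windows singled out by the hypotheses, and confirm that no trace-free endomorphism-valued section of $T_Z \otimes \Omega_Z(1)$ can appear. Verifying these twisted tangent-bundle vanishings, and that the stated degree restrictions are the true thresholds for them, is the substantive content hidden behind the clean identity $h^0(Z, T_Z \otimes \Omega_Z(1)) = \dim V$.
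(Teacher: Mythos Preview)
Your opening reduction is exactly what the paper does: invoke Theorem~\ref{t.cohomology}~(III)--(IV) to make $\zeta$ injective (Corollary~\ref{c.Xi}), so that it suffices to bound $\dim \Xi_Z$ by $\dim V$. Case~(i) is also handled identically, via $T_Z\otimes\Omega_Z\cong\sO_Z$.

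For (ii) and (iii), however, the paper does \emph{not} attempt the Koszul/conormal bookkeeping you outline, and takes genuinely different, more geometric routes. In case~(ii) the paper restricts a hypothetical traceless $A\in H^0(Z,T_Z\otimes\Omega_Z(1))$ to a line $C\subset Z$: the splitting $T_Z|_C\cong\sO(2)\oplus\sO(1)^p\oplus\sO^q$ forces $A_x(\alpha)\in T_\alpha(\widehat{\sC}_x)$ for every $\alpha\in\widehat{\sC}_x$, so $A_x\in\aut(\widehat{\sC}_x)$. Since the VMRT $\sC_x$ of a complete intersection is itself a complete intersection of explicit multi-degree, Lemma~\ref{l.aut} gives $\aut(\widehat{\sC}_x)=\C$ unless $Z$ is a quadric, and hence $A=0$. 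Your parenthetical remark about ``arguing along lines'' points in this direction, but the key input you are missing is this VMRT computation together with the fact that $\aut(\widehat{\sC}_x)$ is one-dimensional.

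In case~(iii) the paper never tries to show $h^0(Z,T_Z\otimes\Omega_Z(1))=\dim V$. Instead it uses the intermediate hypersurface $Y$: any $\sigma\in\Xi_Z$ induces, modulo $T_\alpha(\widehat Y)$, a bundle map $N_{Z/Y}\to N_{Y/\BP V}(1)|_Z$, i.e.\ $\bigoplus_{i\ge2}\sO_Z(m_i)\to\sO_Z(d+1)$, which vanishes precisely because $m_i\ge d+2>d+1$. Hence $\zeta(\Xi_Z)$ lands in $H^0(Z,T_Y\otimes\Omega_Y(1)|_Z)$, and the remaining work (Proposition~\ref{p.Y}, Proposition~\ref{p.dim=V}, Lemma~\ref{l.key}) shows this restricted space equals $H^0(Y,T_Y\otimes\Omega_Y(1))\cong V^\vee$. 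So the gap condition $m_i\ge d+2$ is used for the single, trivial observation $\Hom(\sO(m_i),\sO(d+1))=0$, not for a battery of twisted-tangent-bundle vanishings on $Z$. Your direct computation of $H^0(Z,\End_0(T_Z)(1))$ may well go through, but you have not verified it, and the paper's detour through $Y$ sidesteps exactly the bookkeeping you flag as the ``substantive content''.
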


The proofs of Theorem \ref{t.cohomology} and Theorem \ref{t.Main2} will be given in Sections \ref{s.(i)} -- \ref{s.XiZV}.
At several points in the proofs, we will use the linear normality $H^0(Z, \sO(1)) \cong V^{\vee}$ of complete intersections, without explicitly mentioning it.

\begin{remark} In the proof of Proposition 5.7 in \cite{Hw10}, the two vector spaces $\Xi'_Z $ and $ \Xi^0_Z $ were erroneously mixed up. To fix this error, one has to add the condition $H^0(Z, \Omega_Z(1)) =0$ in the statement of Theorem 1.1  and Proposition 5.7 in \cite{Hw10} (consequently, also for Theorem 1.10 in \cite{Hw13}). Since this condition is  satisfied when $Z$ is a hypersurface, this does not affect the other results in \cite{Hw10} and \cite{Hw13}.
\end{remark}

\section{Proof of Theorem \ref{t.cohomology} (i) and (ii)}\label{s.(i)}

Theorem \ref{t.cohomology} (I) is immediate from
the following lemma, which is proved in  \cite{B76} (Satz 3) and \cite{B77} (Satz 6).
\begin{lemma}\label{l.Bruck}
Let $Z \subset \BP V$ be a smooth non-degenerate complete intersection. Assume $1 \leq r \leq \dim Z -1$, then $H^0(Z, \Omega_Z^r(p))=0$ for  $p \leq r$.
\end{lemma}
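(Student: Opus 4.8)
The plan is to deduce this purely cohomological vanishing from Bott's formula on the ambient $\BP V = \BP^N$, to which a smooth complete intersection is tied by two standard devices: the conormal sequence and the Koszul resolution of $\sO_Z$. Write $m = \dim Z$ and recall that non-degeneracy of $Z$ means every degree $d_i$ in its multi-degree satisfies $d_i \ge 2$; this is the source of all the numerical slack in what follows. The target statement lives exactly on the $q = 0$, $p \le r$ face of the Bott region, since on $\BP^N$ one has $H^0(\BP^N, \Omega^s_{\BP^N}(t)) = 0$ for every $s \ge 1$ and $t \le s$, and this is precisely what I want to transport to $Z$.

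First I would use the conormal sequence $0 \to N^\vee \to \Omega_{\BP V}|_Z \to \Omega_Z \to 0$, where $N^\vee = \bigoplus_i \sO_Z(-d_i)$. Its exterior powers equip $\Omega^r_{\BP V}|_Z$ with a filtration whose graded pieces are $\wedge^j N^\vee \otimes \Omega^{r-j}_Z$ for $j = 0, \ldots, r$, with top piece $\Omega^r_Z$. This yields short exact sequences that compute $H^\bullet(Z, \Omega^r_Z(p))$ from the groups $H^\bullet(Z, \Omega^s_{\BP V}(p)|_Z)$ together with lower-order correction terms $\wedge^j N^\vee \otimes \Omega^{r-j}_Z(p)$; crucially, each such correction is twisted down by $\sum d_i \ge 2j$. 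Next, each restricted bundle $\Omega^s_{\BP V}(p)|_Z$ is computed on the ambient space: tensoring the Koszul resolution $0 \to \wedge^c E^\vee \to \cdots \to E^\vee \to \sO_{\BP V} \to \sO_Z \to 0$ (with $E = \bigoplus_i \sO_{\BP V}(d_i)$) by $\Omega^s_{\BP V}(p)$ produces a spectral sequence whose $E_1$-terms are the groups $H^q(\BP V, \Omega^s_{\BP V}(p - d_{i_1} - \cdots - d_{i_k}))$, all of which are given explicitly by Bott's formula.

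Assembling these two reductions, every contribution to $H^0(Z, \Omega^r_Z(p))$ of total degree zero comes from $H^0$-terms $H^0(\BP V, \Omega^s_{\BP V}(t))$ with $s \le r$ and $t = p - (\text{a sum of degrees}) \le p \le r$, each of which vanishes by Bott. The hypothesis $1 \le r \le m-1$ is what keeps the calculation away from the two places where Bott's formula is nonzero in the relevant range, namely the structure sheaf $\Omega^0_{\BP V} = \sO$ (reached only for $r = 0$, where $H^0(\sO_Z(p)) \neq 0$) and the top exterior power (reached near $r = m$, i.e. the canonical bundle $\Omega^m_Z = K_Z$, where the vanishing genuinely fails).

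I expect the real difficulty to be the Bott corner $H^s(\BP V, \Omega^s_{\BP V}) = \C$, the class of a power of the hyperplane section. Although it sits in positive cohomological degree, the Koszul differentials and the conormal connecting maps can a priori shift such a class into $H^0(Z, \Omega^r_Z(p))$, and the entire content of the lemma is that these classes do not survive. Controlling them is where non-degeneracy is used decisively: because each $d_i \ge 2$, the degree drops introduced by $N^\vee$ and by $E^\vee$ move every potentially surviving term strictly outside the Bott-nonvanishing locus, so the offending differentials are forced to be injective on the relevant pages. The cleanest way to make this bookkeeping close, and the form I would actually carry out, is to strengthen the inductive hypothesis to a full Bott-type vanishing $H^q(Z, \Omega^r_Z(p)) = 0$ on an appropriate region of the $(q,r,p)$-space (excluding the corner contributions at $p = 0$), proved by induction on the codimension $c$ via the two displayed sequences; the asserted statement is then its $q = 0$, $p \le r$ face, and the delicate cases are the boundary rows $r = 1$ and $r = m-1$.
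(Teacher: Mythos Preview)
The paper does not prove this lemma at all: it simply quotes it as a known result of Br\"uckmann, with the citation ``proved in \cite{B76} (Satz 3) and \cite{B77} (Satz 6).'' So there is no proof in the paper to compare against.

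Your proposed strategy --- filter $\Omega^r_{\BP V}|_Z$ via the conormal sequence, resolve $\sO_Z$ by Koszul, feed everything into Bott's formula, and run an induction on the codimension while carrying a strengthened vanishing hypothesis over a region of $(q,r,p)$-space --- is exactly the machine Br\"uckmann built in those papers, so in effect you are reconstructing the cited reference rather than offering an alternative. You have correctly isolated the one genuine obstruction, the classes $H^s(\BP^N,\Omega^s_{\BP^N})\cong\C$, and correctly identified that the hypothesis $d_i\ge 2$ is what pushes every twist strictly past these corners. The one place your write-up is too glib is the sentence ``every contribution to $H^0(Z,\Omega^r_Z(p))$ of total degree zero comes from $H^0$-terms'': as you yourself note two paragraphs later, this is false as stated because the spectral sequences have nonzero differentials from the Bott corners, and the actual work is showing those differentials are injective (equivalently, that the surviving $E_\infty$ contribution in degree $0$ is zero). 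That bookkeeping is genuinely delicate at the boundary $r=m-1$ and is precisely why Br\"uckmann needed two papers; your sketch names the difficulty but does not resolve it, so as written it is a correct outline rather than a proof.
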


Let us prove (II). For  a smooth complete intersection curve $Z \subset \BP^N$ of multi-degree $[m_1, \cdots, m_{N-1}]$, let $m=\sum_{i=1}^{N-1} m_i$. Then
$H^0(Z, T_Z(1)) = H^0(Z, \sO(N+2-m))$ which is non-zero if and only if $N+2\geq m$.
This gives exactly the multi-degrees in (II-a).

Now assume $n:=\dim Z \geq 2$.
Note that  $$H^0(Z, T_Z(1)) \simeq H^0(Z, \Omega_Z^{n-1}(1) \otimes (\Omega_Z^{n})^*) = H^0(Z, \Omega_Z^{n-1}(N+2-m)).$$ By Lemma \ref{l.Bruck}, we have
$H^0(Z, T_Z(1))=0$ provided that $N+2-m \leq n-1$, or equivalently, $\sum_i (m_i-1) \geq 3$.
This is the case except for the types $[2]$, $[3]$ and $[2, 2]$.
Since $H^0(Z, \Omega_Z^r(r+1)) \neq 0$ by \cite{B77} (Satz 7) for $1 \leq r \leq n-1$, we see that  $H^0(Z, T_Z(1)) \neq 0$ for these three cases, proving (II-b).

\section{Proof of Theorem \ref{t.cohomology} (III)}\label{s.(iv)}

\begin{notation}\label{d.normal}
For a nonsingular projective variety $Z \subset \BP V$,
the normal space of $Z$ at $\alpha \in Z$ is $N_{Z,\alpha} = \Hom (\widehat{\alpha}, V/T_{\alpha}(\widehat{Z}))$ and its dual
$N^{\vee}_{Z,\alpha}$ is the conormal space. We will  denote by $N^{\vee}_Z$ the conormal bundle of $Z$. \end{notation}

\begin{definition}\label{d.lambda} Fix a nonzero element $\delta \in V^\vee$.
For $A \in \Xi^0_Z$, define an element $\lambda_\delta (A) \in H^0(Z, N_Z^{\vee}(2))$
by setting its value   at
 $\alpha \in Z$ to be
$$\lambda_\delta (A)  (\varphi \otimes
u^{\otimes 2}) := \delta \circ A (u, \widetilde{\varphi(u)})$$ where $u \in \widehat{\alpha}$ and
for a normal vector $\varphi \in N_{Z,u} = \Hom (\widehat{u},
V/T_u(\widehat{Z}))$,
we denote by $\widetilde{\varphi(u)} \in V$  a vector representing $\varphi(u)
\in V/T_{\alpha}(\widehat{Z})$. Note that the above
definition does not depend on the choice of $\widetilde{\varphi(u)}$
and depends only on $\varphi(u)$ because
 $A(\widehat{\alpha},T_{\alpha}(\widehat{Z})) =0$.
This defines a homomorphism $$\lambda_\delta :  \Xi^0_Z \to H^0(Z, N^{\vee}_Z(2)).$$
\end{definition}

\begin{definition}\label{d.chi}
 For an element $Q$ of
$$H^0(\BP V, \sI_Z(2)) = \{ Q \in {\rm Sym}^2 V^{\vee}, \ Q(u,u) =0 \mbox{ for all }u
\in \widehat{Z}\},$$ define  an element
$\chi(Q) \in H^0(Z, N^{\vee}_Z(2))$ by setting its value   at $\alpha \in Z$ to be
$$\chi(Q) (\varphi \otimes
u^{\otimes 2}) := Q (u, \widetilde{\varphi(u)})$$ where
 $u\in \widehat{Z}$ and
$\widetilde{\varphi(u)} \in V$ is as in Definition \ref{d.lambda}. Note that the above
definition does not depend on the choice of $\widetilde{\varphi(u)}$
and depends only on $\varphi$ because
 $Q(\widehat{\alpha},T_{\alpha}(\widehat{Z})) =0$ for all $\alpha \in Z$.
This defines a homomorphism $$\chi: H^0(\BP V,
\sI_Z(2)) \to H^0(Z, N^{\vee}_Z(2)).$$
\end{definition}

\begin{remark} As is well-known, the sheaf of local sections of the bundle $N_Z^{\vee}$ is just $\sI_Z/\sI_Z^2$.
The homomorphism $\chi$ comes from the natural map $\sI_Z \to \sI_Z/\sI^2_Z$.
\end{remark}

\begin{lemma}\label{l.tan}
In the setting of Definition \ref{d.lambda} and Definition \ref{d.chi},
%both $\lambda_\delta $ and $\chi$ are injective and
we have ${\rm Im}(\lambda_\delta ) \cap {\rm Im}(\chi) =0$. \end{lemma}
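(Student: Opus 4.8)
The plan is to reduce both constructions to honest bilinear forms on $V$ and then exploit a symmetric/antisymmetric clash. Suppose $s \in \Im(\lambda_\delta) \cap \Im(\chi)$, say $s = \lambda_\delta(A) = \chi(Q)$ with $A \in \Xi^0_Z$ and $Q \in H^0(\BP V, \sI_Z(2))$. First I would attach to each datum a bilinear form on $V$: to $Q$ the symmetric form $Q \in \Sym^2 V^\vee$ itself, and to $A$ the antisymmetric form $B := \delta \circ A \in \wedge^2 V^\vee$. Both vanish on the incidence set $\{(u,v) : u \in \widehat{Z},\ v \in T_u(\widehat{Z})\}$ — for $Q$ because differentiating $Q(u,u)=0$ along $\widehat{Z}$ gives $Q(u,v)=0$ whenever $v \in T_u(\widehat{Z})$, and for $B$ because $A \in \Xi^0_Z$ annihilates exactly such pairs by definition. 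This is precisely the vanishing that makes $Q(u,\widetilde{\varphi(u)})$ and $B(u,\widetilde{\varphi(u)})$ independent of the chosen lift, so both sections $s$ are computed by these forms.

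Next I would unwind the equality $\chi(Q) = \lambda_\delta(A)$ pointwise. Fixing $\alpha \in Z$ and a nonzero $u \in \widehat{\alpha}$, evaluation $\varphi \mapsto \varphi(u)$ is an isomorphism $N_{Z,\alpha} \xrightarrow{\sim} V/T_\alpha(\widehat{Z})$, so as $\varphi$ runs over $N_{Z,\alpha}$ the vector $\widetilde{\varphi(u)}$ runs over a full set of representatives of $V/T_\alpha(\widehat{Z})$. Hence the identity $Q(u, \widetilde{\varphi(u)}) = B(u, \widetilde{\varphi(u)})$ forces $(Q - B)(u, w) = 0$ for every $u \in \widehat{Z}$ and every $w \in V$, the value depending only on the class of $w$ modulo $T_\alpha(\widehat{Z})$, on which $Q-B$ already vanishes.

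Finally I would invoke nondegeneracy. Setting $F := Q - B$, the assignment $u \mapsto F(u, \cdot)$ is linear in $u$ and vanishes on all of $\widehat{Z}$; since $Z$ is nondegenerate, $\widehat{Z}$ spans $V$, so $F(u,\cdot)=0$ for every $u \in V$, i.e. $F = 0$ and $Q = B$. But $Q$ is symmetric while $B$ is antisymmetric, so in characteristic zero $Q = B = 0$. Therefore $s = \chi(Q) = 0$ (equivalently $s = \lambda_\delta(A) = 0$), which is exactly $\Im(\lambda_\delta) \cap \Im(\chi) = 0$.

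The argument is short, and the step I expect to be the genuine crux is the bookkeeping of the middle paragraph: I must verify that the common value pins down $F$ on the whole of $\widehat{Z} \times V$ rather than merely on tangentially constrained pairs. This rests on two checks — that $\widetilde{\varphi(u)}$ really sweeps out the entire quotient $V/T_\alpha(\widehat{Z})$, and that both forms descend to that quotient — after which the symmetric-versus-antisymmetric incompatibility does the rest automatically, with nondegeneracy serving only to upgrade vanishing along $\widehat{Z}$ to vanishing on all of $V$.
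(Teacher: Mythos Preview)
Your proposal is correct and follows essentially the same approach as the paper: reduce the equality of sections to the identity $Q(u,w)=\delta\circ A(u,w)$ for all $u\in\widehat{Z}$ and $w\in V$, then use nondegeneracy of $Z$ to extend to all of $V\times V$ and conclude by the symmetric/antisymmetric clash. The paper's version is slightly terser --- it directly evaluates both sections on the normal vector $\varphi^w_u$ with $\varphi^w_u(u)$ the class of an arbitrary $w\in V$, rather than separately checking that both bilinear forms descend to $V/T_\alpha(\widehat{Z})$ --- but the underlying argument is the same.
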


\begin{proof}
%For any $u \in \widehat{Z}$ and  any $w\in V$, we have an element $\varphi^w_u \in N_{Z, u}$ such that $\varphi^w_u(u) = w$.
%Suppose $\lambda_\delta (A) =0$ for $A \in \Xi^0_Z$. Then for any $u \in \widehat{Z}$ and $w \in V$,
%we have $$ \delta \circ A(u,w) = \lambda_\delta(A)( \varphi^w_u \otimes u^{\otimes 2})  =0.$$ Thus $A=0$ by the linear nondegeneracy of $Z$, which proves the injectivity of $\lambda$.
%By the same argument, we can shows that $\chi$ is injective.

For any $u \in \widehat{Z}$ and  any $w\in V$, we have an element $\varphi^w_u \in N_{Z, u}$ such that $\varphi^w_u(u) = w$.
Suppose $\lambda_\delta(A) = \chi(Q) $ for some $A \in \Xi^0_Z$ and $Q \in H^0(\BP V,
\sI_Z(2)).$
For all
$u \in \widehat{Z}$ and  $w \in V$,
 $$Q(u,w) = \chi(Q) (\varphi^w_u \otimes u^{\otimes 2}) =
 \lambda_\delta (A) (\varphi^w_u \otimes u^{\otimes 2}) = \delta \circ A (u,w).$$
 Thus $Q(u,w) =\delta \circ A(u,w)$ for
any $u \in \widehat{Z}$ and $w \in V$. Since $Z$ is
nondegenerate in $\BP V$, this implies that the symmetric form $Q$
and the antisymmetric form $\delta \circ A$ are equal as bilinear forms
on $V$, a contradiction unless $Q= \delta \circ A=0$.
\end{proof}

We recall the following two standard lemmata.
The first one can be found in p. 630 of \cite{BR} and the next one is easily checked by splitting the long exact sequence into short ones.

\begin{lemma}\label{l.Koszul} Let  $Z \subset \BP^N$ be a complete intersection of multi-degree $[m_1, \cdots, m_c].$
Put $n = \dim Z = N-c$.
Writing $m=m_1+\cdots+m_c$, we have the following
Koszul exact sequence
$$0 \to \sO_{\BP^N}(-m) \to \oplus_{i=1}^c \sO_{\BP^N}(-m+m_i) \to \cdots $$ $$ \cdots \to \oplus_{i=1}^c \sO_{\BP^N}(-m_i) \to \sO_{\BP^N} \to \sO_Z \to 0. $$
\end{lemma}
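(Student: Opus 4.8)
The plan is to realize the stated sequence as the Koszul resolution attached to the defining equations of $Z$. Since $Z \subset \BP^N$ is a complete intersection of codimension $c$, it is the common zero locus of $c$ homogeneous polynomials $f_1, \ldots, f_c$ with $\deg f_i = m_i$. Writing $S = \C[x_0, \ldots, x_N]$ for the homogeneous coordinate ring and $I_Z = (f_1, \ldots, f_c) \subset S$ for the homogeneous ideal, the crucial point is that $f_1, \ldots, f_c$ form a \emph{regular sequence} in $S$. Indeed, $S$ is Cohen--Macaulay, and $\mathrm{ht}(I_Z) = \dim S - \dim S/I_Z = (N+1) - (N+1-c) = c$ equals the number of generators, so in the Cohen--Macaulay ring $S$ these $c$ generators of an ideal of height $c$ automatically form a regular sequence; this is precisely the homological content encoded in the phrase ``complete intersection''.

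First I would set up the algebraic Koszul complex. Let $F = \bigoplus_{i=1}^c S(-m_i)$ be the graded free module with basis $\epsilon_1, \ldots, \epsilon_c$ placed in degrees $m_1, \ldots, m_c$, equipped with the Koszul differential $d(\epsilon_{i_1} \wedge \cdots \wedge \epsilon_{i_p}) = \sum_{j} (-1)^{j-1} f_{i_j}\, \epsilon_{i_1} \wedge \cdots \wedge \widehat{\epsilon_{i_j}} \wedge \cdots \wedge \epsilon_{i_p}$ on $\wedge^\bullet F$. For a regular sequence this complex is acyclic and yields a graded free resolution $0 \to \wedge^c F \to \cdots \to \wedge^1 F \to S \to S/I_Z \to 0$; this is the standard acyclicity theorem for the Koszul complex of a regular sequence, which I would either invoke directly or prove by induction on $c$ via the short exact sequences relating $K_\bullet(f_1, \ldots, f_{c-1})$ to $K_\bullet(f_1, \ldots, f_c)$, the inductive step using that $f_c$ is a nonzerodivisor on $S/(f_1,\ldots,f_{c-1})$.

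Next I would sheafify. The functor sending a graded $S$-module to its associated sheaf on $\BP^N$ is exact and sends $S(d)$ to $\sO_{\BP^N}(d)$, so it carries the resolution above to an exact sequence of sheaves. The $p$-th exterior power decomposes as $\wedge^p F = \bigoplus_{i_1 < \cdots < i_p} S(-m_{i_1} - \cdots - m_{i_p})$, hence sheafifies to $\bigoplus_{i_1 < \cdots < i_p} \sO_{\BP^N}(-m_{i_1} - \cdots - m_{i_p})$. Reading off the terms: $\wedge^1 F$ gives $\bigoplus_i \sO_{\BP^N}(-m_i)$; the factor of $\wedge^{c-1}F$ omitting $\epsilon_i$ carries degree $-(m - m_i)$, giving $\bigoplus_i \sO_{\BP^N}(-m + m_i)$; and $\wedge^c F$ gives $\sO_{\BP^N}(-m)$. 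Since the sheaf associated to $S/I_Z$ is $\sO_Z$, the sheafified complex is exactly the asserted sequence.

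The main obstacle is the acyclicity input; everything else is bookkeeping with degree shifts. Establishing that the $f_i$ form a regular sequence, so that the Koszul complex is a genuine resolution and not merely a complex, is the one place where the complete intersection hypothesis (codimension equal to the number of equations, together with the Cohen--Macaulayness of $S$) is essential. In other words, the geometric fact that $Z$ has the expected codimension is precisely what makes the homological algebra succeed.
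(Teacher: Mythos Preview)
Your proof is correct and is the standard argument for the Koszul resolution of a complete intersection. The paper does not actually prove this lemma; it simply cites it as a known result (referring to p.~630 of \cite{BR}). Your write-up supplies exactly the proof one would expect to find behind such a citation: regular sequence from the Cohen--Macaulay height condition, acyclicity of the Koszul complex, and sheafification.
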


\begin{lemma}\label{l.vanish}
Let $0 \to \mathcal{F}_0 \to \mathcal{F}_1 \to \cdots \to
\mathcal{F}_m \to 0$ be an exact sequence of coherent sheaves on a
variety $X$. If $H^{q+j-1}(X, \mathcal{F}_{m-j})=0$ for
all $j \in \{1, 2, \cdots, m\}$, then $H^q(X, \mathcal{F}_m)=0$.
\end{lemma}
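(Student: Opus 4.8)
The plan is to split the long exact sequence into short exact sequences via its syzygy (image) sheaves and then chase the connecting maps in cohomology, using the ``staircase'' of vanishing hypotheses to propagate a chain of injections up through the cohomological degrees until it lands in a group that vanishes. Since the author already remarks that the statement ``is easily checked by splitting the long exact sequence into short ones,'' the content is entirely formal homological algebra; the only care needed is in aligning the indices.

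First I would introduce the intermediate sheaves. For $0 \le i \le m-1$ set $\mathcal{B}_i := \Im(\mathcal{F}_i \to \mathcal{F}_{i+1}) \subseteq \mathcal{F}_{i+1}$. Exactness of the given complex gives $\mathcal{B}_i = \Ker(\mathcal{F}_{i+1} \to \mathcal{F}_{i+2})$ for $0 \le i \le m-2$; injectivity of $\mathcal{F}_0 \to \mathcal{F}_1$ gives $\mathcal{B}_0 \cong \mathcal{F}_0$; and surjectivity of $\mathcal{F}_{m-1} \to \mathcal{F}_m$ gives $\mathcal{B}_{m-1} = \mathcal{F}_m$. The long sequence then decomposes into the short exact sequences
$$0 \to \mathcal{B}_{i-1} \to \mathcal{F}_i \to \mathcal{B}_i \to 0, \qquad 1 \le i \le m-1.$$

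Next I would pass to cohomology. Each short exact sequence contributes a fragment $H^p(\mathcal{F}_i) \to H^p(\mathcal{B}_i) \to H^{p+1}(\mathcal{B}_{i-1})$ of the long exact sequence, so whenever $H^p(\mathcal{F}_i)=0$ the connecting map $H^p(\mathcal{B}_i) \hookrightarrow H^{p+1}(\mathcal{B}_{i-1})$ is injective. Invoking this for $i = m-1, m-2, \ldots, 1$, raising the degree $p$ by one at each stage, yields the chain
$$H^q(\mathcal{F}_m) = H^q(\mathcal{B}_{m-1}) \hookrightarrow H^{q+1}(\mathcal{B}_{m-2}) \hookrightarrow \cdots \hookrightarrow H^{q+m-1}(\mathcal{B}_0) \cong H^{q+m-1}(\mathcal{F}_0).$$
The injection at the stage $i = m-j$ requires exactly the vanishing of $H^{q+j-1}(\mathcal{F}_{m-j})$, which is the hypothesis for that value of $j$ (running over $j=1,\ldots,m-1$), while the terminal group $H^{q+m-1}(\mathcal{F}_0)$ vanishes by the hypothesis $j=m$. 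Being the source of a chain of injections into a zero group, $H^q(\mathcal{F}_m)$ is itself zero.

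The only real obstacle is the bookkeeping: one must check that the degree shift built into each connecting map matches the arithmetic progression $q+j-1$ appearing in the hypothesis. After the substitution $i = m-j$ this matching is immediate, and the two boundary identifications $\mathcal{B}_0 \cong \mathcal{F}_0$ and $\mathcal{B}_{m-1} = \mathcal{F}_m$ anchor the two ends of the chain. The degenerate case $m=1$ needs no separate treatment, since then the chain is empty and $\mathcal{F}_m = \mathcal{B}_0 \cong \mathcal{F}_0$ directly, so $H^q(\mathcal{F}_m) = H^q(\mathcal{F}_0) = 0$.
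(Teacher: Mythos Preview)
Your proof is correct and follows exactly the approach the paper indicates: splitting the long exact sequence into short ones via the image sheaves and chasing the resulting connecting maps. The paper does not spell out the argument beyond that remark, so your write-up is in fact more detailed than the original.
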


Now we can finish the proof of $\Xi^0_Z =0$ as follows.
Let  $[m_1, \ldots, m_c]$ be the
multi-degree of $Z$, satisfying $m_i
\geq 2$ for all $i$.   By  Lemma \ref{l.Koszul}, we have an exact sequence
$$
0 \to \sO_{\BP^N}(2-m) \to \cdots  \to \oplus_{i=1}^c \sO_{\BP^N}(2-m_i) \to  \sI_Z(2) \to 0
$$
By Lemma \ref{l.vanish}, we obtain $H^0(\BP V, \sI_Z(2)) \simeq H^0(\BP V, \oplus_{i=1}^c \sO_{\BP^N}(2-m_i))$.
Since the normal bundle $N_Z$ is isomorphic
to
$ \bigoplus_{i=1}^c \sO_Z(m_i), \;  m_i \geq 2,$ we see that
$$\chi: H^0(\BP V, \sI_Z(2)) \cong H^0(Z, N^{\vee}_Z(2)).$$
is an isomorphism.

Take any $A \in \Xi_Z^\circ$. By Lemma \ref{l.tan}, we have $\lambda_\delta(A)=0$ for any $\delta \in V^\vee$, namely $A(u, w)=0$ for all $u \in \hat{Z}, w \in V$, which implies that $A=0$ by the linear nondegeneracy of $Z$.
Thus $\Xi^0_Z=0$

\section{Proof of Theorem \ref{t.cohomology}  (iv)}\label{s.(v)}

Firstly, for the plane conic $$Z = \{x_1^2 + x_2^2+x_3^2 =0\} \subset \BP V, \ \dim V =3,$$
where $x_1, x_2, x_3$ are homogenous coordinates dual to a basis $(e_1, e_2, e_3)$ of $V$,
define $\sigma \in {\rm Hom}(\wedge^2V, V)$ by $$\sigma
(e_1 \wedge e_2)=e_3, \ \sigma (e_2 \wedge e_3) = e_1, \  \sigma (e_3
\wedge e_1) = e_2.$$  One can check that  $\sigma \in \Xi'_Z$. Hence $\Xi'_Z \neq 0$
for a plane conic.

Let us prove $\Xi'_Z =0$ when $Z$ is not a plane conic.
By Theorem \ref{t.cohomology} (III), the homomorphism $\eta: \Xi'_Z \to H^0(Z, \Omega_Z \otimes \sO(1))$ is injective.
Thus if $\dim Z \geq 2$, we have $\Xi'_Z =0$ by (I). So we may assume that $\dim Z =1$.
Let $[m_1, \ldots, m_{N-1}]$ be the multi-degree of the curve $Z \subset \BP V, N= \dim V -1$
and let $m = m_1 + \cdots + m_{N-1}$.

\begin{lemma}\label{l.projnormal}
If $Z$ is not a plane conic, then the homomorphism
$$H^0(Z, \Omega_Z \otimes \sO(m_i -2)) \otimes H^0(Z, \sO(1)) \to H^0(Z, \Omega_Z \otimes \sO(m_i -1))$$
is surjective for each $1 \leq i \leq N-1$.
\end{lemma}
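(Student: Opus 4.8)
The plan is to turn the multiplication map into one between spaces of sections of \emph{line} bundles and then invoke the projective normality of complete intersections. Since $Z$ is a curve, $\Omega_Z$ is a line bundle, and adjunction identifies it explicitly: from $\omega_{\BP^N} = \sO_{\BP^N}(-N-1)$ and $\det N_{Z/\BP^N} = \sO_Z(m)$ with $m = m_1 + \cdots + m_{N-1}$, we get $\Omega_Z \cong \sO_Z(m-N-1)$. Writing $a_i := m + m_i - N - 3$, the map in the statement becomes
$$H^0(Z, \sO_Z(a_i)) \otimes H^0(Z, \sO_Z(1)) \longrightarrow H^0(Z, \sO_Z(a_i+1)),$$
so it suffices to prove this is surjective for each $i$.

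Next I would isolate the numerical input that the hypothesis actually provides, which is where the exclusion of the plane conic enters. Each $m_j \geq 2$ and there are $N-1$ of them, so $m \geq 2(N-1)$; combined with $m_i \geq 2$ this gives $m + m_i \geq 2N$, hence $a_i \geq N-3$. For $N \geq 3$ this already forces $a_i \geq 0$. For $N = 2$ the curve is a plane curve of a single degree $d$, where $a_i = 2d - 5$, so $a_i \geq 0$ exactly when $d \geq 3$, the sole exception being $d = 2$, i.e. the plane conic. Thus for every smooth complete intersection curve other than the plane conic we have $a_i \geq 0$ for all $i$.

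With $a_i \geq 0$ secured, surjectivity is formal from projective normality. The Koszul resolution of Lemma \ref{l.Koszul} together with the vanishing of Lemma \ref{l.vanish} gives $H^1(\BP^N, \sI_Z(k)) = 0$ for all $k$, so the homogeneous coordinate ring $R = \bigoplus_{k \geq 0} H^0(Z, \sO_Z(k))$ coincides with $\Sym(V^\vee)/I_Z$. Being a quotient of a polynomial ring, $R$ is generated in degree $1$, whence $R_1 \cdot R_{a_i} = R_{a_i+1}$ for every $a_i \geq 0$; this is precisely the asserted surjectivity. (Equivalently, one factors the map through the surjective multiplication $H^0(\BP^N,\sO(a_i)) \otimes H^0(\BP^N,\sO(1)) \to H^0(\BP^N,\sO(a_i+1))$ using the surjective restriction maps.)

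I expect the only genuine content to be the numerical step of the second paragraph: checking that $a_i \geq 0$ holds for every complete intersection curve except the conic is exactly what pins down the hypothesis ``not a plane conic,'' since that is the unique case where the source $H^0(Z,\sO_Z(a_i))$ vanishes while the target $H^0(Z,\sO_Z(a_i+1)) = H^0(Z,\sO_Z) \cong \C$ does not. The reduction via adjunction and the concluding projective-normality argument are both routine once this is in place.
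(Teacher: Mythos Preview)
Your proof is correct and follows essentially the same approach as the paper: identify $\Omega_Z \cong \sO_Z(m-N-1)$ via adjunction, check that the resulting twist $a_i = m + m_i - N - 3$ is nonnegative exactly when $Z$ is not a plane conic, and invoke projective normality of complete intersections. The paper's version is terser and leaves the projective-normality step implicit, whereas you spell out the Koszul/vanishing justification, but the argument is the same.
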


\begin{proof}
By the projective normality of complete intersections, the homomorphism is surjective as long as
the degree of $$\Omega_Z \otimes \sO(m_i-2) \simeq \sO(m-N-1 + m_i -2) = \sO( m+ m_i -N -3) $$ is nonnegative.
From $m = m_1 + \cdots + m_{N-1} \geq 2(N-1)$, we have $m+ m_i -N-3 \geq 0$ unless $N=2$ and $m=m_1 =2$.
Thus the surjectivity holds unless $Z \subset \BP V$ is a plane conic.
\end{proof}

For the next lemma, we need the following definition.

\begin{definition}\label{d.gamma}
As in Definition \ref{d.Xi2}, we can identify the fiber of $\Omega_Z \otimes \sO(2)$ at a point $\alpha \in Z$  with ${\rm Hom} \left(\widehat{\alpha} \otimes (T_{\alpha}\widehat{Z}/\widehat{\alpha}),
\C \right)$.  For an element $\omega \in \wedge^2 V^{\vee}$ and $\alpha \in Z$, define $$\gamma(\omega)_{\alpha}
\in {\rm Hom}\left(\widehat{\alpha} \otimes (T_{\alpha}\widehat{Z}/\widehat{\alpha}),
\C \right) $$   by setting $$ \gamma(\omega) (u \otimes [v]) :=
\omega(u, v)
$$ for any $u \in \widehat{\alpha}$ and $[v] \in T_{\alpha}(\widehat{Z})/\widehat{\alpha}$ given by  $v \in T_{\alpha}(\widehat{Z})$. This defines a homomorphism
$$\gamma: \wedge^2 V^{\vee} \simeq H^0(\BP V, \Omega_{\BP V}(2)) \to
H^0(Z, \Omega_Z\otimes \sO(2)). $$ \end{definition}

\begin{lemma}\label{l.omega}
Let $$\iota_0: H^0(Z, \Omega_Z\otimes \sO(1)) \otimes H^0(Z, \sO(1)) \to H^0(Z, \Omega_Z \otimes \sO(2))$$ be
the natural tensor product homomorphism.
Then for any $\sigma \in \Xi'_Z$ and $s \in V^{\vee} = H^0(Z, \sO(1))$, there exists an element $\omega^{\sigma, s} \in \wedge^2 V^{\vee}$
satisfying $$\iota_0(\eta_{\sigma} \otimes s) = \gamma(\omega^{\sigma,s})$$ where $\gamma$ is as in Definition \ref{d.gamma} and $\eta_{\sigma}$ is as in Definition \ref{d.Xi2}(ii), the image
of $\sigma$ under the injection $\eta: \Xi'_Z \to H^0(Z, \Omega_Z\otimes \sO(1)).$
\end{lemma}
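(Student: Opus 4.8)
The plan is to write down the form $\omega^{\sigma,s}$ explicitly as the contraction of $\sigma$ against $s$ and then to verify the asserted identity fiberwise. Concretely, for $\sigma \in \Xi'_Z$ and $s \in V^{\vee}$ I would set
$$\omega^{\sigma, s} := s \circ \sigma,$$
the bilinear form $(u,v) \mapsto s(\sigma(u,v))$ on $V$. Since $\sigma \in \Hom(V \wedge V, V)$ is alternating, so is $s \circ \sigma$, whence $\omega^{\sigma,s}$ genuinely lies in $\wedge^2 V^{\vee}$. This antisymmetry is also exactly what guarantees that $\gamma(\omega^{\sigma,s})$ is well defined in the sense of Definition \ref{d.gamma}, i.e. independent of the representative $v$ of $[v]$, because $\omega^{\sigma,s}(u,u)=0$ together with $\sigma(\widehat{\alpha}, \widehat{\alpha})=0$.

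The verification is then a pointwise computation under the canonical identifications. Fix $\alpha \in Z$, a vector $u \in \widehat{\alpha}$, and $v \in T_{\alpha}(\widehat{Z})$ representing $[v] \in T_{\alpha}(\widehat{Z})/\widehat{\alpha}$. By the defining property of $\eta_{\sigma}$ in Definition \ref{d.Xi2}(ii), the hypothesis $\sigma \in \Xi'_Z$ means $\sigma(u,v) \in \widehat{\alpha}$ and $\sigma(u,v) = \eta_{\sigma,\alpha}([v])\, u$. Applying $s$ yields
$$\gamma(\omega^{\sigma,s})(u \otimes [v]) = \omega^{\sigma,s}(u,v) = s\big(\sigma(u,v)\big) = \eta_{\sigma,\alpha}([v])\, s(u).$$
On the other hand, evaluating the tensor-product section at $\alpha$ and using that $\iota_0$ is the multiplication map on the values $\eta_{\sigma,\alpha} \in (T_{\alpha}(\widehat{Z})/\widehat{\alpha})^{\vee}$ and $s|_{\widehat{\alpha}} \in \widehat{\alpha}^{\vee}$, one gets
$$\iota_0(\eta_{\sigma} \otimes s)(u \otimes [v]) = \eta_{\sigma,\alpha}([v])\cdot s(u).$$
Thus the two sections of $\Omega_Z \otimes \sO(2)$ agree at every $\alpha \in Z$, giving $\iota_0(\eta_{\sigma}\otimes s) = \gamma(\omega^{\sigma,s})$ as required.

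There is essentially no analytic difficulty here: the entire content is the bookkeeping of the identifications of the fibers of $\Omega_Z \otimes \sO(1)$ and $\Omega_Z \otimes \sO(2)$ recorded in Notation \ref{n}, Definition \ref{d.Xi2}(ii), and Definition \ref{d.gamma}. The step that deserves the most care — and the closest thing to an obstacle — is confirming that both sides are well defined as global sections, namely that each is independent of the lift $v$ of $[v]$ and of the rescaling of $u$ inside the line $\widehat{\alpha}$; the former follows from the alternating property of $\omega^{\sigma,s}$ and from $\sigma(\widehat{\alpha}, \widehat{\alpha})=0$, and the latter from the bilinearity of $\sigma$ and $s$. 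Once these compatibilities are checked, the identity is forced by the single relation $\sigma(u,v) = \eta_{\sigma,\alpha}([v])\, u$ that characterizes $\Xi'_Z$.
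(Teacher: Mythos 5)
Your proposal is correct and coincides with the paper's own proof: both define $\omega^{\sigma,s}(v_1,v_2) := s(\sigma(v_1,v_2))$ and verify the identity fiberwise using the characterizing relation $\sigma(u,v) = \eta_{\sigma,\alpha}([v])\,u$ for $\sigma \in \Xi'_Z$. Your additional remarks on well-definedness (independence of the lift $v$ and of the scaling of $u$) are a sound elaboration of what the paper leaves implicit in Definition \ref{d.gamma}.
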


\begin{proof}
Note that the homomorphism $\iota_0$ restricted to the fibers at $\alpha \in Z$,
$${\rm Hom}(T_{\alpha}(\widehat{Z})/\widehat{\alpha}, \C) \otimes V^{\vee} \stackrel{\iota_{0}}{\longrightarrow}
{\rm Hom}\left(\widehat{\alpha} \otimes (T_{\alpha}(\widehat{Z})/\widehat{\alpha}), \C \right),$$ is given by
$$\iota_0( \psi \otimes s) (u \otimes [v]) = \psi([v]) \cdot s(u)$$ for any $\psi \in {\rm Hom}(T_{\alpha}(\widehat{Z})/\widehat{\alpha}, \C),$ $ u \in \widehat{\alpha}, v \in T_{\alpha}(\widehat{Z})$ and $s \in V^{\vee}$.
Define $\omega^{\sigma,s} \in \wedge^2 V^{\vee}$ by $\omega^{\sigma,s}(v_1, v_2) := s(\sigma(v_1, v_2)).$ Then for $u \in \widehat{Z}$ and $v\in T_u(\widehat{Z}),$ $$\gamma(\omega^{\sigma,s})(u \otimes [v])
= \omega^{\sigma, s}(u, v) = s (\sigma(u, v)) = $$ $$ = s( \eta_{\sigma}(v) u) = \eta_{\sigma}(v) \cdot s(u) =  \iota_0(\eta_{\sigma} \otimes s) (u \otimes [v]).$$
This proves the lemma.
\end{proof}

Recall the following from Bott's formula (\cite{OSS} p.8).

\begin{lemma} \label{l.Projective}

 $H^q(\BP^N, \Omega_{\BP^N}^r(p)) \neq 0 $ if and only if one of the following holds:
$$ \begin{cases}
(1) & q=0 \ \text{and} \ p \geq r+1; \\
(2) & q=r \ \text{and} \  p=0; \\
(3) & q=N \ \text{and} \  p \leq r-1-N.
\end{cases}
$$
\end{lemma}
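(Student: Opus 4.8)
The plan is to deduce Bott's formula from the Euler sequence by induction on the exterior-power index $r$, using the classical cohomology of line bundles on $\BP^N$ as the base case and Serre duality to halve the work. Write $\BP^N = \BP V$ with $\dim V = N+1$. The Euler sequence $0 \to \sO \to \sO(1)^{\oplus(N+1)} \to T_{\BP^N} \to 0$ dualizes to $0 \to \Omega_{\BP^N} \to \sO(-1)^{\oplus(N+1)} \to \sO \to 0$, and taking $r$-th exterior powers (a two-step filtration of $\wedge^r$ of a locally split sequence whose third term is a line bundle) yields, for every $r \geq 1$ and every $p$, the short exact sequence
$$0 \to \Omega_{\BP^N}^r(p) \to \sO(p-r)^{\oplus\binom{N+1}{r}} \to \Omega_{\BP^N}^{r-1}(p) \to 0.$$
The base case $r=0$ is exactly the standard computation of $H^q(\BP^N,\sO(p))$: nonzero only for $q=0,\ p \geq 0$ (this is cases (1) and (2) fused at $r=0$) and for $q=N,\ p \leq -N-1$ (case (3)).

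For the inductive step I assume the formula for $\Omega_{\BP^N}^{r-1}$ and feed the displayed sequence into the long exact cohomology sequence. The crucial simplification is the absence of intermediate line-bundle cohomology: $H^q(\sO(p-r))=0$ for $0 < q < N$. Consequently, for $1 < q < N$ both flanking line-bundle groups vanish and the connecting map gives an isomorphism $H^{q-1}(\Omega_{\BP^N}^{r-1}(p)) \cong H^q(\Omega_{\BP^N}^r(p))$; by the inductive hypothesis the only survivor in this range is $q=r,\ p=0$, which is precisely the interior (diagonal) regime (2), while regimes (1) and (3) are confined to the endpoints $q=0$ and $q=N$, where the explicit $H^0$ and $H^N$ of $\sO(p-r)$ enter. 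To avoid analysing $q$ near $N$ by hand, I would invoke Serre duality $H^N(\Omega_{\BP^N}^r(p))^\vee \cong H^0(\Omega_{\BP^N}^{N-r}(-p))$, using $(\Omega_{\BP^N}^r)^\vee \otimes \sO(-N-1) \cong \Omega_{\BP^N}^{N-r}$; this turns case (3) for index $r$ into case (1) for the complementary index $N-r$, so it suffices to establish (1) and (2) together with vanishing throughout $0 < q < N$.

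The main obstacle is the non-vanishing, not the vanishing. The long exact sequence readily kills everything outside the three regimes, but one must show the surviving groups are genuinely nonzero and that no cancellation occurs at the thresholds. The sharpest point is $q=0$ at $p=r$: here $H^0(\sO(p-r))^{\oplus\binom{N+1}{r}}$ and $H^0(\Omega_{\BP^N}^{r-1}(p))$ both switch on simultaneously, and obtaining the correct cutoff $p \geq r+1$ for $H^0(\Omega_{\BP^N}^r(p))$ requires proving that the contraction map on global sections is injective at $p=r$, i.e. controlling the connecting homomorphism precisely. The other delicate case is the diagonal $q=r,\ p=0$: I would identify $H^r(\BP^N,\Omega_{\BP^N}^r)=H^{r,r}(\BP^N)$ with the one-dimensional span of the $r$-th power of the hyperplane (Fubini--Study) class, which simultaneously pins the dimension to $1$ and shows the boundary map cannot annihilate it. Checking that the a priori competing regimes never overlap then reduces to the numerics $r+1 > r$ and its Serre-dual counterpart.

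Finally I would remark on the conceptually cleaner route: realize $\BP^N = \mathrm{SL}_{N+1}/P$ and present $\Omega_{\BP^N}^r(p)$ as the homogeneous bundle associated to a one-dimensional $P$-representation, then apply the Borel--Weil--Bott theorem. The trichotomy of the statement is exactly the trichotomy of the Bott algorithm applied to the shifted weight $\rho + \mu$: regular dominant (cohomology concentrated in degree $0$), singular (total vanishing, which is the structural reason for nullity in the range $0 < q < N$), or regular but made dominant by a single Weyl reflection (cohomology in degree $N$). This explains, without any computation of connecting maps, why only the three listed ranges can occur.
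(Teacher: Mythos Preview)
The paper does not supply a proof of this lemma at all: it is stated with the preamble ``Recall the following from Bott's formula (\cite{OSS} p.8)'' and no argument is given. Your proposal therefore goes well beyond what the paper does. The approach you outline---induction on $r$ via the short exact sequence $0 \to \Omega_{\BP^N}^r(p) \to \wedge^r V^\vee \otimes \sO(p-r) \to \Omega_{\BP^N}^{r-1}(p) \to 0$ obtained from the Euler sequence, combined with Serre duality to reduce case (3) to case (1)---is exactly the standard proof carried out in \cite{OSS}, and your identification of the threshold $p=r$ for $q=0$ and of the diagonal $q=r,\ p=0$ as the points requiring care is accurate. One small remark: your isomorphism $H^{q-1}(\Omega^{r-1}(p)) \cong H^q(\Omega^r(p))$ is only stated for $1<q<N$, so the cases $q=1$ (for $r\geq 2$) and $q=N-1$ still need to be extracted from the boundary of the long exact sequence; this is routine but should be mentioned. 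Your closing remark that the trichotomy is precisely the regular-dominant/singular/once-reflected trichotomy of Borel--Weil--Bott is correct and is indeed the more conceptual explanation, though the paper does not invoke it here (it does use Borel--Weil--Bott later, in Section~\ref{s.example}).
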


\begin{lemma}\label{l.wedge}
Let $Z \subset \BP^N$ be a smooth complete intersection  of codimension $c$.
If  $r \geq {\rm max}\{1,t\}$, then
$H^0(Z, \Omega_{\BP^N|Z}^r(t)) = 0$.
\end{lemma}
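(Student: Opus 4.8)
The plan is to resolve the restricted sheaf $\Omega^r_{\BP^N|Z}(t)$ by locally free sheaves on $\BP^N$ and then apply Bott's formula termwise. Concretely, since $\Omega^r_{\BP^N}(t)$ is locally free, tensoring the Koszul resolution of Lemma \ref{l.Koszul} by it yields an exact sequence
$$0 \to \Omega^r_{\BP^N}(t-m) \to \cdots \to \bigoplus_{i=1}^c \Omega^r_{\BP^N}(t-m_i) \to \Omega^r_{\BP^N}(t) \to \Omega^r_{\BP^N|Z}(t) \to 0,$$
where, counting $\Omega^r_{\BP^N}(t)$ as the first term from the right end, the $(j-1)$-st term is the direct sum of the sheaves $\Omega^r_{\BP^N}(t - m_{i_1} - \cdots - m_{i_{j-1}})$ over all $(j-1)$-element subsets $\{i_1, \ldots, i_{j-1}\} \subset \{1, \ldots, c\}$. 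We may assume $1 \leq r \leq N$, since otherwise $\Omega^r_{\BP^N} = 0$ and the claim is trivial.

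Next I would apply Lemma \ref{l.vanish} with $q = 0$ and $X = \BP^N$. This reduces the desired vanishing $H^0(Z, \Omega^r_{\BP^N|Z}(t)) = 0$ to verifying
$$H^{j-1}\!\left(\BP^N,\, \Omega^r_{\BP^N}(t - m_{i_1} - \cdots - m_{i_{j-1}})\right) = 0$$
for every $j \in \{1, \ldots, c+1\}$ and every $(j-1)$-element subset of indices. Each such group is computed by Bott's formula (Lemma \ref{l.Projective}), so it remains to rule out the three nonvanishing patterns listed there. Writing $q' = j - 1$ and $p' = t - S$ with $S = m_{i_1} + \cdots + m_{i_{j-1}} \geq 2(j-1)$ (as every $m_i \geq 2$), I would argue as follows. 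For $j = 1$ we have $q' = 0$ and $p' = t$, and Bott case (1) is excluded because $t \leq r < r+1$. For $j \geq 2$ we have $q' \geq 1$, so only cases (2) and (3) of Lemma \ref{l.Projective} could contribute.

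The key step, and the place where the hypothesis $r \geq t$ is essential, is excluding Bott case (2): this would require $q' = r$ and $p' = 0$, i.e. $j - 1 = r$ and $S = t$; but then $S$ is a sum of $r$ integers each at least $2$, forcing $t = S \geq 2r$, which together with $r \geq t$ and $r \geq 1$ gives $2r \leq r$, a contradiction. Case (3) requires $q' = N$, i.e. $j - 1 = N$; since $Z$ is a positive-dimensional complete intersection we have $c = N - \dim Z \leq N - 1$, hence $q' = j - 1 \leq c \leq N - 1 < N$, and this case never occurs. Thus all the relevant intermediate cohomology groups vanish, and Lemma \ref{l.vanish} delivers the result. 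I expect the only real subtlety to be the bookkeeping in case (2)—tracking that the number of summed degrees equals $q' = r$—while the rest is a routine application of Bott vanishing to the twisted Koszul resolution.
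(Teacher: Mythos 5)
Your proposal is correct and follows essentially the same route as the paper: tensor the Koszul resolution of Lemma \ref{l.Koszul} by $\Omega^r_{\BP^N}(t)$, rule out the nonvanishing cases of Bott's formula (Lemma \ref{l.Projective}) for each term --- the key point in both arguments being that a sum of $j-1=r$ degrees $m_i \geq 2$ is at least $2r$, hence cannot equal $t \leq r$ --- and conclude via Lemma \ref{l.vanish}. Your treatment is in fact slightly more explicit than the paper's (which leaves the exclusion of Bott case (3), via $c \leq N-1$, implicit), but the method is identical.
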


\begin{proof}
From the exact sequence in Lemma \ref{l.Koszul}, we have the exact sequence
\begin{equation}\label{e.N}
 0 \to \Omega_{\BP^N}^{r}(t-m)  \to \oplus_{i=1}^c
\Omega_{\BP^N}^{r}(t-m + m_i) \to \cdots \end{equation} $$ \cdots \to \oplus_{i=1}^c
\Omega_{\BP^N}^{r}(t-m_i) \to \Omega_{\BP^N}^{r}(t) \to
\Omega_{\BP^N|Z}^{r}(t) \to 0. $$
Assume that $t \leq r$, then $H^0(\BP^N, \Omega_{\BP^N}^{r}(t))=0$ by
Lemma \ref{l.Projective}. Note that $m_{i_1} + \cdots + m_{i_r} \geq 2r >
r$, hence $t \neq m_{i_1} + \cdots + m_{i_r}$ for any
 indexes $i_1, \cdots, i_r$ whenever $ r \leq c$.
By Lemma \ref{l.Projective}, we have
\begin{equation*}
H^0(\BP^N, \Omega_{\BP^N}^{r}(t))=H^{1}(\BP^N,\oplus_{i=1}^c
\Omega_{\BP^N}^{r}(t-m_i)) = \cdots =H^{c}
(\BP^N, \Omega_{\BP^N}^{r}(t-m))=0.
\end{equation*}
Then the claim follows from Lemma \ref{l.vanish}.
\end{proof}

Now we are ready to  finish the proof of $\Xi'_Z =0$. We use the conormal exact sequence $0 \to N_Z^{\vee} \to \Omega_{\BP
V}|_Z \to \Omega_Z \to 0$ to obtain the following commutative
diagram  where all cohomology groups are taken over $Z$
$$
\begin{array}{ccccc}
H^0(\Omega_{\BP V}(1)|_Z) \otimes V^{\vee} & \longrightarrow & H^0(\Omega_Z(1))
\otimes V^{\vee} & \stackrel{\tau_1 \otimes {\rm Id}_{V^{\vee}}}{\longrightarrow}  &  H^1(N_Z^{\vee}(1)) \otimes V^{\vee} \\
\downarrow & &    {\iota_0} \downarrow & &     {\iota_1} \downarrow \\
 H^0(\Omega_{\BP V}(2)|_Z) &
\stackrel{r}{\longrightarrow} & H^0(\Omega_Z(2)) & \stackrel{\tau_2}{\longrightarrow} & H^1(N_Z^{\vee}(2)) \\
\uparrow & & \parallel & & \\
H^0(\BP V, \Omega_{\BP V}(2)) & \stackrel{\gamma}{\longrightarrow} & H^0(\Omega_Z(2)) .
\end{array}
$$
For any $\sigma \in \Xi'_Z$ and $s \in V^{\vee}$,  Lemma \ref{l.omega} says that
$\iota_0(\eta_\sigma \otimes s) $ belongs to the image of $r$.
Thus it is sent to zero by
$\tau_2$, implying  \begin{eqnarray}\label{e.tau} \iota_1 \left( \tau_1(\eta_{\sigma} \otimes s)  \right) &=& 0.\end{eqnarray}
Using $N_Z \cong \sum_{i=1}^{N-1} \sO(m_i)$ and Serre duality, we have
$$  \tau_1: H^0(Z,\Omega_Z(1)) \to  H^0(Z,\Omega_Z \otimes N_Z(-1))^{\vee} \cong \sum_{i=1}^{N-1} H^0(Z, \Omega_Z \otimes \sO(m_i-1))^{\vee}.$$ Write $$\tau_1(\eta_{\sigma}) = \sum_{i=1}^{N-1} \psi_i, \ \psi_i \in H^0(Z,\Omega_Z \otimes \sO(m_i-1))^{\vee}.$$ Then (\ref{e.tau}) implies $$0= \iota_1(\psi_i \otimes s) \in H^0(Z,\Omega_Z \otimes \sO(m_i-2))^{\vee}$$ for any $s \in V^{\vee}$ and $1 \leq i \leq N-1$.
Thus for any $t \in H^0(Z,\Omega_Z \otimes \sO(m_i-2)),$ we have
$$0 = \iota_1(\psi_i \otimes s) (t) = \psi_i (s \otimes t).$$ By Lemma \ref{l.projnormal}, this implies $\psi_i =0$ for all $1 \leq i \leq N-1$ which implies
 $\tau_1(\eta_{\sigma}) =0$. Since  $H^0(Z,\Omega_{\BP
V}(1)|_Z)=0$ by Lemma \ref{l.wedge}, the homomorphism $\tau_1$ is injective. We conclude that $\eta_{\sigma} =0$ and $\sigma =0$.

\section{Proof of Theorem  \ref{t.Main2}} \label{s.XiZV}

To prove Theorem \ref{t.Main2} (i) and (ii), we use the following corollary of Theorem \ref{t.cohomology} (III) and (IV).

\begin{corollary}\label{c.Xi}
Let $Z \subset \BP V$ be a positive-dimensional nonsingular nondegenerate complete intersection of positive codimension. Assume $Z \subset \BP V$
is not a plane conic, then the map $\zeta: \Xi_Z \to H^0(Z, T_Z \otimes \Omega_Z \otimes \sO(1))$ is injective.
In particular, if $ H^0(Z, T_Z \otimes \Omega_Z \otimes \sO(1)) = V^\vee$, then $\Xi_V = \Xi_Z$.
  \end{corollary}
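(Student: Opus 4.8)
The statement has two parts, and I would treat them in the order they appear. The plan is to first establish the injectivity of $\zeta \colon \Xi_Z \to H^0(Z, T_Z \otimes \Omega_Z \otimes \sO(1))$, and then deduce the equality $\Xi_V = \Xi_Z$ as a dimension count under the hypothesis $H^0(Z, T_Z \otimes \Omega_Z \otimes \sO(1)) = V^\vee$.

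For injectivity, the natural route is to chase the two-step filtration of $\Xi_Z$ introduced in Definition \ref{d.Xi2}. Recall that $\zeta$ has kernel $\Xi'_Z$ by definition, so $\ker(\zeta) = \Xi'_Z$. By Theorem \ref{t.cohomology} (IV), we have $\Xi'_Z \neq 0$ \emph{if and only if} $Z$ is a plane conic. Since we have assumed $Z$ is not a plane conic, this forces $\Xi'_Z = 0$, i.e.\ $\ker(\zeta) = 0$, which is exactly injectivity. (I note in passing that Theorem \ref{t.cohomology} (III), giving $\Xi^0_Z = 0$, is what guarantees the second homomorphism $\eta$ is injective on $\Xi'_Z$, and thus underlies the proof of (IV); but for the present corollary the clean input is just (IV).) So the first sentence of the corollary is essentially a direct quotation of Theorem \ref{t.cohomology} (IV), reformulated through the identification $\ker(\zeta) = \Xi'_Z$.

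For the ``in particular'' clause, I would argue as follows. We always have the inclusion $\Xi_V \subseteq \Xi_Z$ noted in Definition \ref{d.Xi}, and by that same definition $\Xi_V \cong V^\vee$ via the contraction homomorphism. Now suppose additionally that $H^0(Z, T_Z \otimes \Omega_Z \otimes \sO(1)) = V^\vee$, meaning this cohomology group has dimension $\dim V = \dim V^\vee$. Combining injectivity of $\zeta$ with this hypothesis gives
$$
\dim V = \dim \Xi_V \leq \dim \Xi_Z \leq \dim H^0(Z, T_Z \otimes \Omega_Z \otimes \sO(1)) = \dim V,
$$
so all inequalities are equalities, forcing $\dim \Xi_Z = \dim \Xi_V$. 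Together with $\Xi_V \subseteq \Xi_Z$, this yields $\Xi_V = \Xi_Z$.

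The only point requiring a small amount of care — and the part I would flag as the main obstacle, though it is minor — is pinning down the inclusion $\Xi_V \subseteq \Xi_Z$ and the identification $\dim \Xi_V = \dim V$ with full rigor: one must verify that a contraction $\iota_\delta \sigma$ with $\delta \in V^\vee$ genuinely lands in $\Xi_Z$, i.e.\ that $\sigma(u,v) \in \C u + \C v \subseteq T_u(\widehat{Z})$ whenever $u \in \widehat{Z}$ and $v \in T_u(\widehat{Z})$. This is immediate from the definition of $\Xi_V$ since $\C u + \C v \subseteq T_u(\widehat Z)$ for such $u, v$, and it is exactly the content recorded in Definition \ref{d.Xi}. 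Hence no genuine new computation is needed: the corollary is a formal consequence of Theorem \ref{t.cohomology} (IV), the inclusion and isomorphism $\Xi_V \cong V^\vee$ from Definition \ref{d.Xi}, and the stated dimension hypothesis.
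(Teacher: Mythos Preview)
Your proposal is correct and follows essentially the same route as the paper: the injectivity of $\zeta$ is read off directly from $\ker(\zeta)=\Xi'_Z$ together with Theorem~\ref{t.cohomology}~(IV), and the ``in particular'' clause is the chain $V^\vee \simeq \Xi_V \subseteq \Xi_Z \hookrightarrow H^0(Z, T_Z\otimes\Omega_Z\otimes\sO(1)) = V^\vee$, just as in the paper's one-line justification. Your parenthetical remark that (III) underlies (IV) is also exactly why the paper labels the corollary as a consequence of both (III) and (IV).
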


The last statement follows from  $V^{\vee} \simeq \Xi_V \subseteq \Xi_Z \subseteq H^0(Z, T_Z \otimes \Omega_Z \otimes \sO(1)).$
Note that if $Z$ is a curve of degree $\geq 3$, then $H^0(Z, T_Z \otimes \Omega_Z \otimes \sO(1)) = H^0(Z, \sO(1))= V^\vee$.  Thus we obtain Theorem \ref{t.Main2} (i) by applying Corollary \ref{c.Xi}.

For the proof of Theorem \ref{t.Main2} (ii), we recall two lemmata.

\begin{lemma}\label{l.CoveredByLines}
Let $X \subset \BP V$ be a projective manifold covered by lines.
Assume that the VMRT $\sC_x \subset \BP T_x(X)$ at a general point $x \in X$
is nondegenerate and the Lie algebra $\aut(\widehat{\sC}_x) \subset {\rm End}(T_x(X))$ of infinitesimal automorphisms of the affine cone $\widehat{\sC}_x$ has dimension 1. Then
$H^0(X, T_X \otimes \Omega_X(1)) = H^0(X, \sO(1)).$ \end{lemma}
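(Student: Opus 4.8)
The plan is to establish the equality by showing that the tautological inclusion
$H^0(X,\sO(1)) \hookrightarrow H^0(X, T_X\otimes\Omega_X(1))$, which sends a linear form $s$ to the twisted identity $s\cdot{\rm Id}_{T_X}$, is surjective; since it is visibly injective this yields the lemma. Using $T_X\otimes\Omega_X\cong\End(T_X)$, I would take an arbitrary section $\Phi\in H^0(X,\End(T_X)\otimes\sO(1))$, viewed as a bundle map $\Phi\colon T_X\to T_X\otimes\sO(1)$, and aim to prove that at a general point $x$ its value $\Phi_x\in\End(T_x(X))\otimes\sO(1)_x$ is an infinitesimal automorphism of the affine cone, i.e. $\Phi_x\in\aut(\widehat{\sC}_x)\otimes\sO(1)_x$.

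The main tool is restriction to lines. Fix a general $x\in X$ and a general line $\ell\subset X$ through $x$, with tangent direction $\alpha=[T_x\ell]\in\sC_x$ and $u\in\widehat{\alpha}$. Since $\ell$ is a standard minimal rational curve, on $\ell\cong\BP^1$ one has $T_X|_\ell\cong\sO(2)\oplus\sO(1)^{\oplus p}\oplus\sO^{\oplus q}$ with $p=\dim\sC_x$ and $q=\dim X-1-p$; write $P_\ell:=\sO(2)\oplus\sO(1)^{\oplus p}$ for the ample subbundle. I will use the two standard facts from the theory of minimal rational curves: the tangent line $T\ell=\sO(2)$ has fiber $\widehat{\alpha}=\C u$ at $x$, and the fiber $(P_\ell)_x$ equals the affine tangent space $T_\alpha(\widehat{\sC}_x)\subset T_x(X)$. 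Restricting $\Phi$ to $\ell$ and composing with the projection onto the trivial quotient gives
$$\sO(2)=T\ell \hookrightarrow T_X|_\ell \xrightarrow{\;\Phi|_\ell\;} T_X|_\ell\otimes\sO(1) \twoheadrightarrow \big(T_X|_\ell/P_\ell\big)\otimes\sO(1)\cong\sO(1)^{\oplus q},$$
which is an element of $\Hom(\sO(2),\sO(1)^{\oplus q})=H^0(\ell,\sO(-1)^{\oplus q})=0$. Hence this composite vanishes identically, so $\Phi|_\ell$ carries $T\ell$ into $P_\ell\otimes\sO(1)$; evaluating at $x$ yields $\Phi_x(u)\in T_\alpha(\widehat{\sC}_x)\otimes\sO(1)_x$.

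As $\ell$ ranges over the lines through $x$, the directions $\alpha$ fill a dense subset of $\sC_x$, so by continuity (on the smooth locus of $\sC_x$) one gets $\Phi_x(u)\in T_u(\widehat{\sC}_x)\otimes\sO(1)_x$ for every $u\in\widehat{\sC}_x$. This is exactly the condition that $\Phi_x$ lie in $\aut(\widehat{\sC}_x)\otimes\sO(1)_x$. Now the Euler (homothety) field always gives $\C\cdot{\rm Id}\subseteq\aut(\widehat{\sC}_x)$, so the hypothesis $\dim\aut(\widehat{\sC}_x)=1$ forces $\aut(\widehat{\sC}_x)=\C\cdot{\rm Id}$. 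Therefore $\Phi_x=s(x)\cdot{\rm Id}$ for a well-defined $s(x)\in\sO(1)_x$; concretely $s=\tfrac1n\,{\rm tr}(\Phi)$ with $n=\dim X$ is a global section of $\sO(1)$, and $\Phi=s\cdot{\rm Id}$ on a dense open set, hence everywhere. This proves surjectivity and therefore $H^0(X,T_X\otimes\Omega_X(1))=H^0(X,\sO(1))$.

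The step requiring the most care is the geometric input $(P_\ell)_x=T_\alpha(\widehat{\sC}_x)$, the identification of the fiber of the ample part of $T_X|_\ell$ with the affine tangent space of the VMRT, which is valid for $x$ and $\ell$ general; coupled with this, one must check that varying $\ell$ genuinely sweeps out a dense subset of $\sC_x$ so that the pointwise tangency propagates to the whole cone. By contrast, the numerical vanishing $H^0(\ell,\sO(-1)^{\oplus q})=0$ and the elementary observation $\C\cdot{\rm Id}\subseteq\aut(\widehat{\sC}_x)$ are routine.
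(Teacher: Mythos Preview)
Your proof is correct and follows essentially the same route as the paper's: restrict a global section of $T_X\otimes\Omega_X(1)$ to a general line, use the splitting type $T_X|_\ell\cong\sO(2)\oplus\sO(1)^p\oplus\sO^q$ together with the identification of the positive part with the affine tangent space of the VMRT to force the value at $x$ into $\aut(\widehat{\sC}_x)\otimes\sO(1)_x$, and then invoke the dimension-one hypothesis. The only cosmetic difference is that the paper subtracts off the trace first and shows a traceless section must vanish, whereas you keep the full section and extract the scalar via $s=\tfrac{1}{n}\operatorname{tr}(\Phi)$ at the end; these are equivalent formulations of the same argument.
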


\begin{proof}
Assume that there exists a traceless element $A \in H^0(X, T_X \otimes \Omega_X(1))$.
For a general point $x \in X$, let $A_x \in {\rm End}(T_x(X))$ be a traceless endomorphism
representing the value of $A$ at $x$.
Let $C \subset X$ be a line through  $x \in X$. Then
$$T(X)|_C \cong \sO(2) \oplus \sO(1)^p \oplus \sO^q$$
for suitable nonnegative integers $p, q$ and if $\alpha \in \widehat{\sC}_x$
is a nonzero vector in $T_x(C) \subset T_x(X)$, we have $$T_{\alpha}(\widehat{\sC}_x)
\cong (\sO(2) \oplus \sO(1)^p)_x.$$
Restricting $A$ to $C$, we have $$T(X)|_C \cong \sO(2) \oplus \sO(1)^p \oplus \sO^q
  \stackrel{A|_C}{\longrightarrow}
T(X)\otimes \sO(1)|_C \cong \sO(3) \oplus \sO(2)^p \sO(1)^q.$$
Hence it sends $\sO(2)$ to $\sO(3) \oplus \sO(2)^p$.
This implies that for any  $\alpha \in \widehat{\sC}_x$,
the vector $A_x(\alpha)$ lies in $T_{\alpha}(\widehat{\sC}_x).$
This implies $A_x \in \aut(\widehat{\sC}_x)$. Since the latter is assumed to be $\C$, the traceless
endomorphism $A_x$ must be zero. Since this is so for a general $x$, we have $A=0$. \end{proof}

\begin{lemma} \label{l.aut}
Let $Z \subset \BP V$ be a smooth non-degenerate complete intersection. Then $\aut(\widehat{Z}) = \C$ unless $Z$ is a hyperquadric.
\end{lemma}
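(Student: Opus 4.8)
The plan is to bound $\aut(\widehat{Z})$ from above by the space of global vector fields on $Z$, and then to compute the latter using the vanishing theorems already assembled in Sections \ref{s.(i)}--\ref{s.(v)}, treating the curve case by hand at the end.

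First I would set up the reduction. The Euler field $u \mapsto u$ is tangent to every cone, so $\C \cdot \mathrm{id} \subseteq \aut(\widehat{Z})$ always. Given $A \in \aut(\widehat{Z}) \subseteq \End(V)$, the defining condition $A(u) \in T_u(\widehat{Z})$ for $u \in \widehat{Z}$ allows one to define a global section $\xi_A \in H^0(Z, T_Z)$ whose value at $\alpha \in Z$ is the homomorphism $\widehat{\alpha} \to T_\alpha(\widehat{Z})/\widehat{\alpha}$, $u \mapsto A(u) \bmod \widehat{\alpha}$, via the identification $T_\alpha(Z) = \Hom(\widehat{\alpha}, T_\alpha(\widehat{Z})/\widehat{\alpha})$ of Notation \ref{n}. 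The assignment $A \mapsto \xi_A$ is linear, and its kernel consists of those $A$ with $A(u) \in \C u$ for every $u \in \widehat{Z}$. Since $\widehat{Z}$ spans $V$ (non-degeneracy) and is irreducible, such an $A$ has a spanning set of eigenvectors, hence is diagonalizable, and its cone $\widehat{Z}$ lies in a single eigenspace, which must then be all of $V$; thus $A \in \C \cdot \mathrm{id}$. Consequently $\aut(\widehat{Z})/\C \hookrightarrow H^0(Z, T_Z)$, and in particular $\aut(\widehat{Z}) = \C$ as soon as $H^0(Z, T_Z) = 0$.

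Next I would dispose of the case $n := \dim Z \geq 2$ by a direct computation. By adjunction $\omega_Z = \sO_Z(m - N - 1)$ with $m = m_1 + \cdots + m_c$, so $T_Z \cong \Omega_Z^{\,n-1} \otimes \sO_Z(N + 1 - m)$. Applying Brückmann's vanishing (Lemma \ref{l.Bruck}) with $r = n - 1$ and $p = N + 1 - m$ yields $H^0(Z, T_Z) = 0$ provided $p \leq r$, i.e. $m \geq c + 2$. As $m = \sum_i m_i \geq 2c$, this holds automatically when $c \geq 2$, while for a hypersurface ($c = 1$) it amounts to $\deg Z \geq 3$; the only excluded possibility is the quadric. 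Hence $\aut(\widehat{Z}) = \C$ for every non-quadric $Z$ with $\dim Z \geq 2$.

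Finally the curve case $n = 1$, which I expect to be the crux. Here $T_Z \cong \sO_Z(N + 1 - m)$, so $H^0(Z, T_Z) = 0$ unless $m \leq N + 1$; a short numerical check (using $m \geq 2(N-1)$) shows that the only smooth non-degenerate complete intersection curves with $H^0(Z, T_Z) \neq 0$ are the conic $[2]$ of genus $0$ and the two genus-one curves $[3]$ and $[2,2]$, for which $H^0(Z, T_Z) = \C$. The conic is exactly the hyperquadric exception. For the two genus-one cases the reduction of the first step only gives $\dim \aut(\widehat{Z}) \leq 2$, so the main obstacle is to rule out that the nowhere-vanishing, translation-invariant generator of $H^0(Z, T_Z)$ arises from a linear vector field. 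I would argue via group theory: the identity component $H$ of the stabilizer of $Z$ in $\mathrm{PGL}(V)$ is a connected affine algebraic group whose Lie algebra is precisely $\aut(\widehat{Z})/\C$, and it acts on the elliptic curve $Z$, hence maps to $\mathrm{Aut}^0(Z) \cong Z$. Since there is no non-constant homomorphism from a connected affine group (generated by copies of the additive and multiplicative groups) to an abelian variety, $H$ acts trivially on $Z$; as $Z$ is non-degenerate, a connected subgroup of $\mathrm{PGL}(V)$ fixing $Z$ pointwise fixes its span $\BP V$ pointwise, forcing $H = \{1\}$. Therefore $\aut(\widehat{Z})/\C \cong \mathrm{Lie}(H) = 0$, so $\aut(\widehat{Z}) = \C$, which completes the proof.
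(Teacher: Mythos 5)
Your proof is correct and follows essentially the same route as the paper's: the reduction $\aut(\widehat{Z})/\C \hookrightarrow H^0(Z,T_Z)$, Br\"uckmann's vanishing (Lemma \ref{l.Bruck}) applied to $T_Z \cong \Omega_Z^{n-1}\otimes\sO_Z(N+1-m)$ when $\dim Z \geq 2$, and the degree count isolating $[2]$, $[3]$, $[2,2]$ among curves. The one place you go beyond the paper is the elliptic-curve endgame, where the paper simply asserts ``it is easy to see that $\aut(\widehat{Z})=\C$,'' whereas you supply a complete and valid argument: the stabilizer of $Z$ in $\mathrm{PGL}(V)$ is affine, admits no nonconstant homomorphism to $\mathrm{Aut}^0(Z)\cong Z$, and a connected subgroup of $\mathrm{PGL}(V)$ fixing the nondegenerate, irreducible $Z$ pointwise must be trivial.
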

\begin{proof}
As $H^0(Z, T_Z) = H^0(Z, \Omega_Z^{n-1}(N+1-m))$,  by Lemma \ref{l.Bruck}
we have $H^0(Z, T_Z)=0$ if $N+1-m \leq n-1$ (namely $\sum_i (m_i-1) \geq 2$) and $n\geq 2$.
Now if $Z$ is a curve, then $H^0(Z, T_Z) = H^0(Z, \sO(N+1-m))$ is non-zero if and only if $N+1 \geq m$, which implies that $Z$ is
either  a plane cubic or of type $[2,2]$ in $\BP^3$. In the latter two cases, $Z$ is an elliptic curve and it is easy to see that
 $\aut(\widehat{Z}) = \C$.
\end{proof}

 The proof of Theorem \ref{t.Main2} (ii) can be obtained as follows. It is well-known (e.g. repeated applications of Example 1.4.2 in \cite{Hw01}) that if the multi-degree of $Z$ is $[m_1, \ldots, m_c]$, then the VMRT $\sC_x$ of $Z$ at a general point $x \in Z$ is a complete intersection of multi-degree
$$[2,3, \cdots, m_1, 2, 3, \cdots, m_2, \cdots, 2, 3, \cdots, m_c].$$
By Lemma \ref{l.aut}, $H^0(\sC_x, T_{\sC_x}) =0$ unless $Z$ is a hyperquadric. By Lemma \ref{l.CoveredByLines} and Corollary \ref{c.Xi},
we  have $\Xi_Z = \Xi_V$.

\medskip
It remains to prove Theorem \ref{t.Main2} (iii), which is equivalent to the following.

\begin{proposition} \label{p.XiZV}
Let $Y \subset \BP V$ be a smooth hypersurface of degree $d \geq 3$. Let $Z \subset Y$ be a smooth  complete intersection of $Y$  of
multi-degree $[m_1, \cdots, m_c]$ such that $m_c \geq \cdots \geq m_1 \geq d+2$. Then $\Xi_Z = \Xi_V$.
\end{proposition}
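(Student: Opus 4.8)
The plan is to reduce the statement, via Corollary \ref{c.Xi}, to the single cohomological equality $\dim H^0(Z, T_Z \otimes \Omega_Z \otimes \sO(1)) = \dim V$. Since $Z$ lies on the smooth hypersurface $Y$ of degree $d \geq 3$, it is not a plane conic, so $\Xi'_Z = 0$ by Theorem \ref{t.cohomology}(IV) and $\zeta$ is injective. The identity endomorphism tensored with $H^0(Z, \sO(1)) \cong V^{\vee}$ embeds $\Xi_V \cong V^{\vee}$ into $H^0(Z, T_Z \otimes \Omega_Z \otimes \sO(1))$, so one always has $\dim H^0(Z, T_Z \otimes \Omega_Z \otimes \sO(1)) \geq \dim V$. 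Thus $\Xi_Z = \Xi_V$ will follow once I prove the reverse inequality, i.e. that every global twisted endomorphism of $T_Z$ is a scalar multiple of the identity.

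I would obtain this upper bound by a one-directional squeeze using only the tangent and conormal sequences of $Z \subset Y$, which avoids the spurious contributions of a full double-complex. Writing $N_{Z/Y} = \bigoplus_{i=1}^c \sO_Z(m_i)$, the sequence
$$0 \to T_Z \otimes \Omega_Z(1) \to T_Y|_Z \otimes \Omega_Z(1) \to \bigoplus_{i=1}^c \Omega_Z(m_i+1) \to 0$$
gives, on $H^0$, an injection $H^0(Z, T_Z \otimes \Omega_Z(1)) \hookrightarrow H^0(Z, T_Y|_Z \otimes \Omega_Z(1))$, with no vanishing required. Next, the conormal sequence twisted by $T_Y|_Z(1)$,
$$0 \to \bigoplus_{j=1}^c T_Y|_Z(1-m_j) \to T_Y|_Z \otimes \Omega_Y|_Z(1) \to T_Y|_Z \otimes \Omega_Z(1) \to 0,$$
yields a surjection $H^0(Z, T_Y|_Z \otimes \Omega_Y|_Z(1)) \twoheadrightarrow H^0(Z, T_Y|_Z \otimes \Omega_Z(1))$ as soon as $H^1(Z, T_Y|_Z(1-m_j)) = 0$ for all $j$. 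Finally the Koszul resolution of $\sO_Z$ in $Y$ (Lemma \ref{l.Koszul}), tensored by $T_Y \otimes \Omega_Y(1)$, together with Lemma \ref{l.vanish}, identifies $H^0(Z, T_Y|_Z \otimes \Omega_Y|_Z(1))$ with $H^0(Y, T_Y \otimes \Omega_Y(1))$ provided the twisted-down groups $H^{j}(Y, T_Y \otimes \Omega_Y(1 - m_{i_1} - \cdots - m_{i_j}))$ vanish for $1 \leq j \leq c$. Each of these vanishings is checked by resolving $T_Y, \Omega_Y$ against $T_{\BP V}|_Y, \Omega_{\BP V}|_Y$ and applying Bott's formula (Lemma \ref{l.Projective}) and Lemma \ref{l.wedge}; this is precisely where the hypothesis $m_i \geq d+2$ is used, since the gap of at least two pushes every relevant twist into the vanishing range. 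Chaining the three steps gives $\dim V \leq \dim H^0(Z, T_Z \otimes \Omega_Z(1)) \leq \dim H^0(Y, T_Y \otimes \Omega_Y(1))$.

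The main obstacle is therefore to establish the endpoint equality $H^0(Y, T_Y \otimes \Omega_Y \otimes \sO(1)) = V^{\vee}$ for the smooth hypersurface $Y$ of degree $d \geq 3$, which alone closes the squeeze to $\dim V = \dim H^0(Z, T_Z \otimes \Omega_Z(1))$. This step cannot use the high-degree gap, as it encodes the degree-$d$ direction; instead one works on $Y$ directly, where $\aut(\widehat{Y}) = \C$ by Lemma \ref{l.aut}, and computes $H^0(Y, T_Y \otimes \Omega_Y(1))$ by the single-equation version of the Koszul-plus-Bott argument via the Euler sequence on $\BP V$. The conclusion that the traceless part vanishes is exactly the hypersurface phenomenon underlying \cite{Hw10} and \cite{Hw13}, and it genuinely requires $d \geq 3$: for a quadric the endomorphism space is larger and $\Xi_Y \neq \Xi_V$. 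The remaining delicate points are purely bookkeeping, namely verifying the vanishing of $H^1(Z, T_Y|_Z(1-m_j))$ and of the Koszul-propagation groups above, and keeping the scalar (identity) class counted exactly once, so that the chain of inequalities collapses to the desired equality.
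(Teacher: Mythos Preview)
Your outline is correct, and it shares with the paper two of its three main ingredients: the identification $H^0(Z, T_Y|_Z \otimes \Omega_Y|_Z(1)) \cong H^0(Y, T_Y \otimes \Omega_Y(1))$ via the Koszul resolution (this is the paper's Lemma \ref{l.key}, and requires only $m_j \geq d$), and the endpoint computation $H^0(Y, T_Y \otimes \Omega_Y(1)) \cong V^{\vee}$ for $d \geq 3$ (the paper's Proposition \ref{p.dim=V}, proved via $\mathrm{ad}(T_Y)(1) \cong \Omega_Y^{T_{N-2}}(N+2-d)$ and \cite{BR}). Where you diverge is in the bridge from $\Xi_Z$ to $H^0(Z, T_Y|_Z \otimes \Omega_Y|_Z(1))$: you compute the \emph{entire} group $H^0(Z, T_Z \otimes \Omega_Z(1))$ by squeezing it between $V^{\vee}$ and $H^0(Z, T_Y|_Z \otimes \Omega_Z(1))$, which costs you the extra vanishing $H^1(Z, T_Y|_Z(1-m_j)) = 0$. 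The paper instead never computes $H^0(Z, T_Z \otimes \Omega_Z(1))$: it observes that any $\phi \in \Xi_Z \subset \Hom(\wedge^2 V, V)$ defines, for each $x \in Z$, a linear map $\phi_x \colon V \otimes \widehat{x} \to V$, and the obstruction to $\phi$ landing in $H^0(Z, T_Y|_Z \otimes \Omega_Y|_Z(1))$ is a bundle map $N_{Z/Y} \to N_{Y/\BP V}(1)|_Z$, i.e.\ $\bigoplus_i \sO_Z(m_i) \to \sO_Z(d+1)$, which is forced to vanish as soon as $m_i > d+1$. This is exactly the same numerical threshold $m_i \geq d+2$ that you need for $H^0(\sO_Z(d+1-m_j)) = 0$ inside your $H^1$ computation, so the two arguments are cousins; but the paper's normal-bundle step replaces your surjection-plus-$H^1$-vanishing by a single line. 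Your route has the advantage of yielding the stronger statement $H^0(Z, T_Z \otimes \Omega_Z(1)) = V^{\vee}$, at the price of the additional cohomological bookkeeping you mention (which does go through, e.g.\ via the restricted Euler sequence and projective normality when $\dim Z = 2$).
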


The proof is rather involved and will occupy the rest of the section.

\medskip
We start by introducing a notation. Let $T_k$ be the Young tableaux with $k+1$ boxes, which has two columns and the number of boxes in the first column is $k$ and in the second is $1$. We have the $T$-symmetrical tensor $\Omega^{T_k}$ in the sense of \cite{B1}.
Recall the following from Theorem 1 in \cite{B1}.

\begin{lemma} \label{l.P}

(i) $H^q(\BP^N, \Omega_{\BP^N}^{T_k}(p)) \neq 0$ if and only if one of the following holds:
$$ \begin{cases}
(1) & q=0 \ \text{and} \  p \geq k+3; \\
 (2) &  q=1 \ \text{and} \  p=k+1; \\
 (3) & q=k \ \text{and} \  p=1; \\
 (4) & q=N \ \text{and} \  p \leq k-N.
\end{cases}
$$

(ii) $\dim H^1(\BP^N, \Omega_{\BP^N}^{T_{N-1}}(N))  = N+1$.
\end{lemma}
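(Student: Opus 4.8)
The plan is to prove Lemma \ref{l.P} by recognizing $\Omega^{T_k}_{\BP^N}$ as a Schur functor applied to the cotangent bundle and running Bott's algorithm on $\BP^N = \BP(W)$ with $\dim W = N+1$. The tableau $T_k$, having a first column of $k$ boxes and a second column of one box, encodes the partition $(2,1^{k-1})$, so that $\Omega^{T_k}_{\BP^N} = \mathbb{S}_{(2,1^{k-1})}(\Omega^1_{\BP^N})$, an irreducible $GL(W)$-homogeneous bundle. The first step is to put this in standard Bott form. Writing $\sQ = T_{\BP^N}(-1)$ for the tautological rank-$N$ quotient and using $\Omega^1_{\BP^N}\cong \sQ^\vee(-1)$ together with the identities $\mathbb{S}_\lambda(E\otimes L)=\mathbb{S}_\lambda(E)\otimes L^{\otimes|\lambda|}$ and $\mathbb{S}_\mu(E^\vee)=\mathbb{S}_\mu(E)^\vee$, I would rewrite $\Omega^{T_k}_{\BP^N}(p)\cong \mathbb{S}_\nu(\sQ)\otimes\sO(p-k-1)$, where $\nu=(0^{N-k},(-1)^{k-1},-2)$ is the reverse-negated padding of $(2,1^{k-1})$.

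Next I would run Bott's theorem. Regarding the bundle as the irreducible homogeneous bundle with $GL(W)$-weight $a=(\nu_1,\dots,\nu_N,\,k+1-p)$ (the last slot recording the line factor $\sO(-1)$), I add $\rho=(N,N-1,\dots,1,0)$ to obtain $a+\rho=(N,N-1,\dots,k+1,\,k-1,\dots,1,\,-1,\,k+1-p)$. Its first $N$ entries form, in strictly decreasing order, the set $S=\{-1\}\cup(\{1,\dots,N\}\setminus\{k\})$. Bott's theorem asserts that the cohomology is nonzero exactly when $a+\rho$ has distinct entries, i.e.\ when $k+1-p\notin S$; since $S$ is precisely the integers of $[1,N]$ other than $k$ together with $-1$, this forces $k+1-p$ to be $\le -2$, $=0$, $=k$, or $\ge N+1$, that is $p\ge k+3$, $p=k+1$, $p=1$, or $p\le k-N$. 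In each admissible case the unique nonzero group sits in degree $q=\#\{s\in S: s<k+1-p\}$, which evaluates to $0,1,k,N$ respectively, reproducing the four cases of part (i). (This is the content of Theorem 1 of \cite{B1}, which one may simply cite; but I prefer to calibrate the conventions directly against the rank-one case $\Omega^1_{\BP^N}(p)$ recorded in Lemma \ref{l.Projective}.)

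For part (ii) I would specialize $k=N-1$, $p=N$, so that $k+1-p=0$: this lands in the case $p=k+1$, whence $H^1$ is the only nonvanishing cohomology. To extract its dimension I invoke the representation-theoretic refinement of Bott: $H^1$ is the irreducible $GL(W)$-representation of highest weight $\mathrm{sort}(a+\rho)-\rho$. Here sorting $S\cup\{0\}$ decreasingly gives $(N,N-2,\dots,1,0,-1)$, and subtracting $\rho$ yields the weight $(0,-1,\dots,-1)$, i.e.\ $W\otimes(\det W)^{-1}$, of dimension $N+1$. As an alternative that avoids the representation-theoretic output, since $H^1$ is the sole nonzero group one has $\dim H^1=-\chi(\BP^N,\Omega^{T_{N-1}}_{\BP^N}(N))$, and the decomposition $\Omega^{N-1}_{\BP^N}\otimes\Omega^1_{\BP^N}\cong\Omega^{T_{N-1}}_{\BP^N}\oplus\Omega^N_{\BP^N}$ together with $\Omega^{N-1}_{\BP^N}(N)\cong\sQ$ and $\Omega^N_{\BP^N}(N)\cong\sO(-1)$ reduces the Euler characteristic to an elementary computation.

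The only genuine obstacle is bookkeeping. For part (i) one must fix the sign and ordering conventions of Bott's algorithm—the precise form of the $\rho$-shift and whether one sorts increasingly or decreasingly—so that the four admissible values of $k+1-p$ produce exactly the degrees $0,1,k,N$ rather than a shifted set; this is what the test against $\Omega^1_{\BP^N}(p)$ is for. For part (ii), the subtlety is that Bott in its vanishing form yields only $H^1\neq 0$, so obtaining the exact value $N+1$ requires either identifying the highest weight $W\otimes(\det W)^{-1}$ or carrying out the Euler-characteristic route; both are routine once the weight computation above is in place.
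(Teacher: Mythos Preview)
Your proposal is correct. The paper itself does not prove Lemma~\ref{l.P}: it simply quotes the statement as ``Theorem~1 in \cite{B1}'' (Br\"uckmann's 1997 paper on the Hilbert polynomial of $\Omega^T$), so there is no proof in the paper to compare against beyond a bare citation.

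What you do instead is give a self-contained derivation via Borel--Weil--Bott on $\BP^N$, identifying $\Omega^{T_k}_{\BP^N}(p)$ with the irreducible homogeneous bundle of weight $(0^{N-k},(-1)^{k-1},-2,\,k+1-p)$ and reading off both the vanishing pattern and, for part~(ii), the highest weight $(0,-1,\dots,-1)$ of the surviving $H^1$. Your bookkeeping is right: the first $N$ entries of $a+\rho$ are already strictly decreasing, so the length of the sorting permutation is exactly $\#\{s\in S: s<k+1-p\}$, and your four cases match the lemma on the nose. The Pieri decomposition $\Omega^{N-1}\otimes\Omega^1\cong\Omega^{T_{N-1}}\oplus\Omega^N$ you mention as an alternative for~(ii) is also the identity underlying the paper's Lemma~\ref{l.P2}(iii), so either route closes the argument. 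Compared with the paper's one-line citation, your approach costs more space but has the virtue of being independent of \cite{B1} and of making the convention choices explicit; the calibration against $\Omega^1_{\BP^N}(p)$ from Lemma~\ref{l.Projective} is exactly the right sanity check to pin those down.
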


We also recall the following standard results.

\begin{lemma} \label{l.P2}
(i)  $H^0(\BP^N, T_{\BP^N} \otimes \Omega_{\BP^N}(k)) \simeq H^0 (\BP^N, \sO_{\BP^N}(k))$ for any integer $k \leq 0$.

(ii) For $1 \leq q \leq N-2$ and any integer $k$, we have $H^q(\BP^N, T_{\BP^N} \otimes
\Omega_{\BP^N}(k))  =0 $  unless $(q, k) = (1, -1)$.

(iii)  $H^1(\BP^N, T_{\BP^N} \otimes \Omega_{\BP^N}(-1))
\simeq H^1(\BP^N, \Omega_{\BP^N}^{T_{N-1}}(N))$, which is of
dimension $N+1$.

(iv) $H^{N-1}(\BP^N, T_{\BP^N} \otimes \Omega_{\BP^N}(k)) =0 $ if $k \neq -N$.

\end{lemma}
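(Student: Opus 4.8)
The plan is to reduce all four statements to the Bott-type vanishing for the $T$-symmetrical tensors recorded in Lemma \ref{l.P}, together with the elementary cohomology of line bundles on $\BP^N$, after first splitting $T_{\BP^N}\otimes\Omega_{\BP^N}$ into its scalar and trace-free parts. Since $\Omega_{\BP^N}$ has rank $N$ with $\wedge^N\Omega_{\BP^N}=\sO_{\BP^N}(-N-1)$, the standard identity $E^\vee\cong\wedge^{N-1}E\otimes(\det E)^{-1}$ gives $T_{\BP^N}\cong\wedge^{N-1}\Omega_{\BP^N}(N+1)$. Applying the $\GL$-equivariant Pieri decomposition $\wedge^{N-1}\Omega_{\BP^N}\otimes\Omega_{\BP^N}\cong\wedge^N\Omega_{\BP^N}\oplus\Omega_{\BP^N}^{T_{N-1}}$ (adding one box to a single column of height $N-1$ produces only the shapes $(1^N)$ and $(2,1^{N-2})$, the latter being the tableau $T_{N-1}$), I obtain
$$T_{\BP^N}\otimes\Omega_{\BP^N}(k)\ \cong\ \sO_{\BP^N}(k)\ \oplus\ \Omega_{\BP^N}^{T_{N-1}}(N+1+k)$$
for every integer $k$. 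This is precisely the splitting of $T\otimes\Omega$ into scalars and traceless endomorphisms, and it makes $\Omega^{T_{N-1}}$ appear exactly as part (iii) requires.

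Granting this decomposition, each assertion follows summand by summand. Recall that $H^q(\BP^N,\sO_{\BP^N}(k))$ vanishes for $0<q<N$ and all $k$ (the $r=0$ case of Lemma \ref{l.Projective}). For (i), with $k\leq 0$ the twist satisfies $N+1+k\leq N+1<(N-1)+3$, so case (1) of Lemma \ref{l.P}(i) forces $H^0(\Omega_{\BP^N}^{T_{N-1}}(N+1+k))=0$, leaving $H^0(T\otimes\Omega(k))\cong H^0(\sO(k))$. For (ii), when $1\leq q\leq N-2$ the line-bundle summand contributes nothing, and by Lemma \ref{l.P}(i) the summand $H^q(\Omega_{\BP^N}^{T_{N-1}}(N+1+k))$ can be nonzero only through case (2), which for the index $N-1$ reads $q=1$ and twist $N+1+k=N$, i.e. $(q,k)=(1,-1)$; this is the single exception. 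For (iii), at $k=-1$ the decomposition reads $T\otimes\Omega(-1)\cong\sO(-1)\oplus\Omega_{\BP^N}^{T_{N-1}}(N)$ with $H^1(\sO(-1))=0$, so the projection onto the second summand yields $H^1(T\otimes\Omega(-1))\cong H^1(\Omega_{\BP^N}^{T_{N-1}}(N))$, of dimension $N+1$ by Lemma \ref{l.P}(ii). For (iv), when $N\geq 3$ the degree $N-1$ lies strictly between $1$ and $N$, so the line-bundle summand vanishes and Lemma \ref{l.P}(i) gives $H^{N-1}(\Omega_{\BP^N}^{T_{N-1}}(N+1+k))\neq 0$ only through case (3), namely $q=N-1$ and twist $N+1+k=1$, i.e. $k=-N$; hence the group vanishes for $k\neq -N$.

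The computations themselves are bookkeeping; the one load-bearing step is the structural decomposition of the first paragraph, and in particular the identification of the trace-free summand with the specific tensor $\Omega_{\BP^N}^{T_{N-1}}$ whose cohomology is tabulated in Lemma \ref{l.P}. The hard part is to verify the Pieri splitting $\wedge^{N-1}\Omega\otimes\Omega\cong\wedge^N\Omega\oplus\Omega^{T_{N-1}}$ cleanly from $\GL$-representation theory in characteristic $0$ and to check that the tableau $T_{N-1}$ of \cite{B1} (two columns of heights $N-1$ and $1$, with $N$ boxes in total) corresponds to the partition $(2,1^{N-2})$, so that the indexing conventions of Lemma \ref{l.P} are exactly the ones being invoked. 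The only remaining care is in the ranges of $q$: parts (ii) and (iv) must be kept disjoint, which is why (iv) is phrased for the top-middle degree $N-1$ and is meaningful only for $N\geq 3$, the hypothesis being implicit (for $N=2$ the degree $N-1=1$ already falls under (iii), where the group is nonzero).
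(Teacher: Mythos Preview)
Your proof is correct and takes essentially the same approach as the paper. The paper phrases the decomposition as a short exact sequence $0 \to \sO_{\BP^N}(k) \to T_{\BP^N}\otimes\Omega_{\BP^N}(k) \to \Omega_{\BP^N}^{T_{N-1}}(N+1+k) \to 0$ rather than a direct sum, but since this sequence splits (via the trace) in characteristic zero the two presentations are equivalent, and both reduce the computation immediately to Lemma~\ref{l.P}.
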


\begin{proof}
Recall that $T_{\BP^N} \simeq  \Omega^{N-1}_{\BP^N} \otimes K_{\BP^N}^{\vee}$ and the subbundle of  traceless endomorphisms ${\rm ad}(T_{\BP^N}) \subset T_{\BP^N} \otimes \Omega^1_{\BP^N}$ satisfies
 ${\rm ad}(T_{\BP^N}) \simeq \Omega_{\BP^N}^{T_{N-1}} \otimes K_{\BP^N}^{\vee}$. Hence we have an exact sequence
 $$
 0 \to \sO_{\BP^N}(k) \to T_{\BP^N} \otimes \Omega_{\BP^N}(k) \to {\rm ad}(T_{\BP^N})(k) = \Omega_{\BP^N}^{T_{N-1}}(N+1+k) \to 0.
 $$
All the claims follow  from Lemma \ref{l.P} (ii) and (iii) applied to this exact sequence.
\end{proof}

To prove Proposition \ref{p.XiZV}, we need several results on the hypersurface $Y \subset \BP^N$.
To start with, we deduce the following result from Satz 2 of \cite{B74}.

\begin{lemma} \label{l.OmegaY}
(i) If $1 \leq r \leq N-2$, then $H^0(Y, \Omega_Y^r(p)) = 0$ for $p \leq r$.

 %$\dim H^0(Y, \Omega_Y^r(p)) = $
%$$
%=\binom{p-1}{r}\binom{p+N-r}{p} + \sum_{i=1}^{r+1} (-1)^i \binom{N+1}{r+1-i}\binom{p+N-1-i(d-1)-r}{N}.
%$$

(ii) If $q+r \neq N-1$ and $1 \leq q \leq N-2$, then
$$
\dim H^q(Y, \Omega_Y^r(p)) = \delta_{q, r} \cdot \delta_{p, 0}.
$$

%(iii)  If $p \leq N-2d$, then $H^1(Y, \Omega^{N-2}_Y(p))=0$.

%$\dim H^{N-1-r}(Y, \Omega_Y^r(p)) =$
%$$= \sum_{i=0}^{N+1} (-1)^i \binom{N+1}{i} \binom{-p-(i-1)(d-1)}{N} + \delta_{N-1, 2r} \cdot \delta_{p, 0}$$
\end{lemma}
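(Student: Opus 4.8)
The plan is to reduce everything to Bott's formula (Lemma~\ref{l.Projective}) through the two exact sequences attached to the smooth hypersurface $Y \subset \BP^N$ of degree $d$, and then to induct on $r$. Part (i) needs no separate argument: $Y$ is a complete intersection with $\dim Y = N-1$, so the range $1 \le r \le N-2$ coincides with $1 \le r \le \dim Y - 1$, and the vanishing $H^0(Y, \Omega_Y^r(p)) = 0$ for $p \le r$ is exactly Lemma~\ref{l.Bruck} applied to $Y$. I therefore focus on (ii).

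For (ii) I would use the restriction sequence $0 \to \Omega_{\BP^N}^r(p-d) \to \Omega_{\BP^N}^r(p) \to \Omega_{\BP^N}^r(p)|_Y \to 0$ (obtained from $0 \to \sO_{\BP^N}(-d) \to \sO_{\BP^N} \to \sO_Y \to 0$) together with the twisted $r$-th wedge of the conormal sequence, $0 \to \Omega_Y^{r-1}(p-d) \to \Omega_{\BP^N}^r(p)|_Y \to \Omega_Y^r(p) \to 0$. The first sequence and Bott's formula compute the restricted bundle: for $1 \le q \le N-2$ every group $H^q(\BP^N, \Omega_{\BP^N}^r(s))$ vanishes except the diagonal entry $q=r$, $s=0$, and a short computation then shows that $H^q(Y, \Omega_{\BP^N}^r(p)|_Y)$ is one-dimensional exactly at the two near-diagonal spots $(q,p)=(r,0)$ and $(q,p)=(r-1,d)$ and is zero otherwise. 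Substituting this into the conormal sequence and inducting on $r$ (with base case $r=0$, where the restriction sequence for $\sO_Y(p)$ gives $H^q(Y,\sO_Y(p))=0$ for $1\le q\le N-2$) reduces the statement to the behaviour of finitely many connecting maps.

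It is convenient to invoke Serre duality on $Y$: since $(\Omega_Y^r)^\vee \otimes K_Y \cong \Omega_Y^{n-r}$ with $n=N-1$ and $K_Y \cong \sO_Y(d-N-1)$, one gets $H^q(Y,\Omega_Y^r(p))^\vee \cong H^{n-q}(Y,\Omega_Y^{n-r}(-p))$, an involution $(q,r,p)\leftrightarrow(n-q,n-r,-p)$ preserving both the hypothesis $q+r\neq N-1$ and the predicted value $\delta_{q,r}\delta_{p,0}$. This lets me treat the boundary indices $q$ near $N-2$ (where $H^{q+1}$ becomes top cohomology of $Y$) by dualizing to small $q$, and it halves the casework. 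On the diagonal $q=r$, $p=0$ the induction yields the single Hodge class $H^r(Y,\Omega_Y^r)\cong H^r(\BP^N,\Omega_{\BP^N}^r)=\C$, the desired value; the off-diagonal targets must be shown to vanish.

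The main obstacle is concentrated in the one genuinely nonformal position $(q,p)=(r-1,d)$ (and, dually, $(r-2,d)$), where the restricted bundle contributes. There the conormal sequence produces an exact piece $H^{r-1}(Y,\Omega_Y^{r-1}) \stackrel{a}{\to} H^{r-1}(Y,\Omega_{\BP^N}^r(d)|_Y) \to H^{r-1}(Y,\Omega_Y^r(d)) \to 0$ in which the middle term is one-dimensional, so the required vanishing amounts to surjectivity of $a$; the neighbouring target $(r-2,d)$ dually requires injectivity of the same map, and the one case in which the source of $a$ is large (the middle cohomology, where $2(r-1)=N-1$) falls under the excluded range $q+r=N-1$. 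The map $a$ is induced by wedging with $dF$ and, under the identification of its target with $H^r(\BP^N,\Omega_{\BP^N}^r)=\C$, it is a Lefschetz-type assertion: it says that the restricted power of the hyperplane class does not die on $Y$, equivalently that $a$ carries a Hodge/hyperplane class to a generator. Proving this non-vanishing --- rather than the bookkeeping with Bott's formula --- is the real content; I would either quote Satz~2 of \cite{B74}, which encodes precisely this, or establish surjectivity of $a$ directly from the hard Lefschetz property of $Y$ (or from the Jacobian-ring description of primitive hypersurface cohomology).
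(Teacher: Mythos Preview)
Your argument is correct. The paper itself gives no proof at all: it simply records that the lemma ``follows from Satz~2 of \cite{B74}'' and moves on. What you have written is, in effect, a sketch of Br\"uckmann's argument (the same two exact sequences, Bott's formula, induction on $r$, and Serre duality to fold the range $q+r>N-1$ onto $q+r<N-1$), together with the correct identification of the single nonformal ingredient: the map you call $a$ is nonzero precisely because the hyperplane class on $\BP^N$ restricts nontrivially to $Y$, which is a Lefschetz-type statement. So your route and the paper's citation are the same result; you have just unpacked it. One small point worth making explicit if you write this out: the induction on $r$ is not literally self-contained, because computing $H^q(Y,\Omega_Y^r(p))$ via the conormal sequence needs $H^q(Y,\Omega_Y^{r-1}(p-d))$, and the inductive hypothesis for that group requires $q+(r-1)\neq N-1$, i.e.\ $q+r\neq N$, which is not your assumption. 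Your Serre-duality trick handles this (the locus $q+r=N$ is sent to $q'+r'=N-2$, which is safely below $N-1$), but it is worth flagging that the induction has to be organised as ``first all $(q,r)$ with $q+r<N-1$, then dualise'' rather than a naive upward induction on $r$.
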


%\begin{corollary} \label{c.OmegaY}
%(i) If $p \leq r$, then $H^0(Y, \Omega_Y^r(p)) = 0$.
%\end{corollary}

The following lemma follows from Bott's formula (cf. Lemma \ref{l.Projective}) applied to
long exact sequence of cohomologies associated to the sequence $ 0 \to \Omega_{\BP^N}^r(p-d) \to \Omega_{\BP^N}^r(p) \to \Omega_{\BP^N |_Y}^r(p) \to 0$.
\begin{lemma} \label{l.OmegaPY}
(i)  If $p \leq r$ and $r \geq 1$, then $H^0(Y, \Omega_{\BP^N|_Y}^r(p)) =0$.
%$\dim H^0(Y, \Omega_{\BP^N |_Y}^r(p)) = $
%$$
%=\binom{p-1}{r}\binom{p+N-r}{p} - \binom{p-d-1}{r} \binom{p-d+N-r}{N-r} + \delta_{r,1} \cdot \delta_{p, d}.
%$$

(ii) If $1 \leq q \leq N-2$, then
$$\dim H^q(Y, \Omega_{\BP^N |_Y}^r(p)) = \delta_{q,r} \cdot \delta_{p,0} + \delta_{q, r-1} \cdot \delta_{p, d}.$$
\end{lemma}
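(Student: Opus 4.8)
The plan is to tensor the ideal-sheaf sequence $0 \to \sO_{\BP^N}(-d) \to \sO_{\BP^N} \to \sO_Y \to 0$ of the degree-$d$ hypersurface $Y$ with the locally free sheaf $\Omega_{\BP^N}^r(p)$, producing the restriction sequence
$$0 \to \Omega_{\BP^N}^r(p-d) \to \Omega_{\BP^N}^r(p) \to \Omega_{\BP^N|_Y}^r(p) \to 0,$$
and then to read all the cohomology of the middle-by-quotient term off the associated long exact sequence using Bott's formula (Lemma \ref{l.Projective}). The one fact that organizes everything is that, in the cohomological degrees that will occur, the boundary cases $q=0$ and $q=N$ of Bott's formula never contribute, so the only surviving nonvanishing of $H^q(\BP^N,\Omega_{\BP^N}^r(\cdot))$ is the diagonal case $q=r$ with twist $0$. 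Here $N\geq 2$ automatically (so that $Y$ is positive-dimensional), and part (ii) is non-vacuous only for $N\geq 3$.

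For part (i) I would extract the segment
$$H^0(\BP^N, \Omega_{\BP^N}^r(p)) \to H^0(Y, \Omega_{\BP^N|_Y}^r(p)) \to H^1(\BP^N, \Omega_{\BP^N}^r(p-d)).$$
Under $p\leq r$ and $r\geq 1$, the left-hand term vanishes by Bott's case (1), which requires $p\geq r+1$ for $H^0(\Omega^r)$ to be nonzero. The right-hand term vanishes because $H^1(\BP^N,\Omega_{\BP^N}^r(p-d))\neq 0$ would force (via case (2)) $r=1$ and $p-d=0$, i.e. $p=d\geq 3$, contradicting $p\leq r=1$. Squeezed between two zeros, the middle group vanishes.

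For part (ii) I would isolate the five-term piece (projective-space cohomology on $\BP^N$)
$$H^q(\Omega_{\BP^N}^r(p-d)) \to H^q(\Omega_{\BP^N}^r(p)) \to H^q(Y, \Omega_{\BP^N|_Y}^r(p)) \to H^{q+1}(\Omega_{\BP^N}^r(p-d)) \to H^{q+1}(\Omega_{\BP^N}^r(p)).$$
For $1\leq q\leq N-2$ one has $q\geq 1$ and $q+1\leq N-1<N$, so Bott's boundary cases are inert and each flanking group is $0$ or $1$-dimensional, nonzero exactly at: $(q=r,\,p=d)$ for the first; $(q=r,\,p=0)$ for the second; $(q=r-1,\,p=d)$ for the fourth; $(q=r-1,\,p=0)$ for the fifth. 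I would then split into $q=r$, $q=r-1$, and $q\notin\{r-1,r\}$. In the case $q=r$ the last two flanking terms vanish, so $C=\operatorname{coker}$ of the first map; in the case $q=r-1$ the first two vanish, so $C=\ker$ of the last map; otherwise all four vanish and $C=0$. In each case one checks the answer agrees with $\delta_{q,r}\delta_{p,0}+\delta_{q,r-1}\delta_{p,d}$.

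The computation is mechanical once Bott's formula is in hand, and the single point requiring care — which I expect to be the main (though modest) obstacle — is to verify that within each of the three cases the two potentially-contributing flanking groups have \emph{mutually exclusive} nonvanishing conditions, governed by $p=0$ versus $p=d$. Since $d\geq 3>0$, these never coincide, so the two relevant terms are never simultaneously nonzero; consequently the dimension of $H^q(Y, \Omega_{\BP^N|_Y}^r(p))$ is pinned down by exactness alone, with no need to determine any of the connecting homomorphisms. This is precisely where the hypothesis $d\geq 3$ is used.
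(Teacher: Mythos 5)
Your proposal is correct and is essentially the paper's own proof: the paper disposes of this lemma in one sentence, saying it follows from Bott's formula (Lemma \ref{l.Projective}) applied to the long exact sequence of the restriction sequence $0 \to \Omega_{\BP^N}^r(p-d) \to \Omega_{\BP^N}^r(p) \to \Omega_{\BP^N|_Y}^r(p) \to 0$, which is exactly your argument, fleshed out with the case analysis and the observation that $p=0$ and $p=d$ can never hold simultaneously. Nothing in your write-up deviates from or adds to the paper's route in any essential way.
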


\begin{lemma}\label{l.PtoY}
Assume $p \leq -1$ and $0 \leq q \leq N-2$, then $H^q(Y, T_{\BP^N} \otimes \Omega_{\BP^N}(p)|_Y) = 0$
unless $(q, p) = (1, -1)$ or $(N-2, d-N)$.
\end{lemma}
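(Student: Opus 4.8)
Lemma \ref{l.PtoY} asks us to show that $H^q(Y, T_{\BP^N} \otimes \Omega_{\BP^N}(p)|_Y) = 0$ for $p \leq -1$ and $0 \leq q \leq N-2$, except at the two pairs $(q,p) = (1,-1)$ and $(N-2, d-N)$.

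The plan is to restrict the bundle $T_{\BP^N} \otimes \Omega_{\BP^N}(p)$ from $\BP^N$ to the hypersurface $Y$ via the defining short exact sequence
\begin{equation*}
0 \to T_{\BP^N} \otimes \Omega_{\BP^N}(p-d) \to T_{\BP^N} \otimes \Omega_{\BP^N}(p) \to (T_{\BP^N} \otimes \Omega_{\BP^N}(p))|_Y \to 0,
\end{equation*}
where the first map is multiplication by the degree-$d$ equation of $Y$. First I would write down the associated long exact cohomology sequence on $\BP^N$ and read off which groups $H^q(\BP^N, T_{\BP^N} \otimes \Omega_{\BP^N}(p))$ and $H^{q+1}(\BP^N, T_{\BP^N} \otimes \Omega_{\BP^N}(p-d))$ can be nonzero, using Lemma \ref{l.P2} as the input. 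Since $p \leq -1$, the twist $k=p$ lands in the range where Lemma \ref{l.P2}(ii) forces vanishing of $H^q(\BP^N, T_{\BP^N}\otimes\Omega_{\BP^N}(p))$ for $1 \leq q \leq N-2$ except at $(q,p)=(1,-1)$, while Lemma \ref{l.P2}(i) and (iv) handle the $q=0$ and $q=N-1$ edges. The twist $k = p-d$ is even more negative, so the same lemma constrains $H^{q+1}(\BP^N, T_{\BP^N}\otimes\Omega_{\BP^N}(p-d))$ as well.

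The bulk of the argument is then bookkeeping: for each $q$ in the target range $0 \leq q \leq N-2$, the group $H^q(Y, \cdot)$ on the right of the sequence is squeezed between $H^q$ of the ambient twist and $H^{q+1}$ of the shifted ambient twist. The exceptional pair $(1,-1)$ arises because $H^1(\BP^N, T_{\BP^N}\otimes\Omega_{\BP^N}(-1))$ is nonzero of dimension $N+1$ by Lemma \ref{l.P2}(iii), so that cohomology survives the restriction. The second exceptional pair $(N-2, d-N)$ is the more delicate one: there the nonvanishing must come from the connecting contribution $H^{N-1}(\BP^N, T_{\BP^N}\otimes\Omega_{\BP^N}(p-d))$ with $p-d = -N$, which is exactly the twist excluded from the vanishing in Lemma \ref{l.P2}(iv). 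I would check that for $(q,p)=(N-2,d-N)$ the coboundary map lands in this nonvanishing group, giving a potentially nonzero $H^{N-2}(Y,\cdot)$, and that for all other $(q,p)$ both flanking ambient groups vanish.

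The main obstacle will be organizing the edge cases $q=0$ and $q=N-2$ together with the degenerate values of $N$ (small $N$, or the coincidences $p=-1$ and $p-d=-N$ occurring simultaneously when $d = N-1$), since here one must be careful that the two listed exceptional pairs do not collide or overlap and that the surjectivity/injectivity at the ends of the long exact sequence is argued correctly rather than assumed. Everything else reduces to feeding Lemma \ref{l.P2} into the long exact sequence and tracking indices; the only genuinely substantive point is identifying precisely where the two surviving cohomology classes come from and verifying they cannot appear at any other $(q,p)$.
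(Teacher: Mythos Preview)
Your approach is exactly the paper's: tensor the structure sequence $0 \to \sO_{\BP^N}(-d) \to \sO_{\BP^N} \to \sO_Y \to 0$ with $T_{\BP^N}\otimes\Omega_{\BP^N}(p)$ and read off the vanishing from Lemma~\ref{l.P2}. One small simplification: the statement only asserts vanishing \emph{outside} the two listed pairs, so you need not verify that the cohomology is actually nonzero at $(1,-1)$ or $(N-2,d-N)$, nor worry about whether those pairs coincide; the bookkeeping you describe already suffices.
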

\begin{proof}

From the exact sequence $0 \to  \sO_{\BP^N}(-d) \to \sO_{\BP^N} \to \sO_Y \to 0$ we have
$$
0 \to T_{\BP^N} \otimes \Omega_{\BP^N}(p-d) \to T_{\BP^N} \otimes \Omega_{\BP^N}(p) \to T_{\BP^N} \otimes \Omega_{\BP^N}(p)|_Y \to 0.
$$

Now the claim follows from Lemma \ref{l.P2}.
\end{proof}
\begin{lemma}\label{l.TPY}
Assume $p \leq -1$ and $0 \leq q \leq N-3$, then
$$H^q(Y, T_{\BP^N|_Y}(p)\otimes \Omega_{Y}) =0 $$  unless
$(q, p) = (1, -1)$ or $(N-3, 2d-N-1)$.
\end{lemma}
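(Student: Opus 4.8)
The plan is to prove Lemma \ref{l.TPY} by relating the bundle $T_{\BP^N|_Y}(p) \otimes \Omega_Y$ to the already-understood bundle $T_{\BP^N} \otimes \Omega_{\BP^N}(p)|_Y$ from Lemma \ref{l.PtoY}, using the conormal exact sequence that links $\Omega_Y$ with $\Omega_{\BP^N|_Y}$. First I would recall the conormal sequence $0 \to N_Y^{\vee} \to \Omega_{\BP^N|_Y} \to \Omega_Y \to 0$, which for the hypersurface $Y$ of degree $d$ reads $0 \to \sO_Y(-d) \to \Omega_{\BP^N|_Y} \to \Omega_Y \to 0$. Tensoring this with the locally free sheaf $T_{\BP^N|_Y}(p)$ gives the short exact sequence
\begin{equation*}
0 \to T_{\BP^N|_Y}(p-d) \to T_{\BP^N|_Y}(p) \otimes \Omega_{\BP^N|_Y} \to T_{\BP^N|_Y}(p) \otimes \Omega_Y \to 0.
\end{equation*}
The long exact cohomology sequence then expresses $H^q(Y, T_{\BP^N|_Y}(p) \otimes \Omega_Y)$ in terms of cohomology of $T_{\BP^N|_Y}(p-d)$ and of $T_{\BP^N|_Y}(p) \otimes \Omega_{\BP^N|_Y}$. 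The middle term is exactly $T_{\BP^N} \otimes \Omega_{\BP^N}(p)|_Y$, whose cohomology in the relevant range is pinned down by Lemma \ref{l.PtoY}.

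The remaining input I need is the cohomology of the twisted tangent bundle $T_{\BP^N|_Y}(s)$ for $s = p - d \leq -1 - d$. I would obtain this from the Euler sequence restricted to $Y$, namely $0 \to \sO_Y(s) \to \sO_Y(s+1)^{\oplus(N+1)} \to T_{\BP^N|_Y}(s) \to 0$, whose associated long exact sequence reduces everything to line-bundle cohomology $H^q(Y, \sO_Y(\ast))$ on the hypersurface. These latter groups are controlled by the standard sequence $0 \to \sO_{\BP^N}(\ast - d) \to \sO_{\BP^N}(\ast) \to \sO_Y(\ast) \to 0$ together with Bott vanishing for line bundles on $\BP^N$, so in the range $0 \le q \le N-3$ and $s \le -1-d$ the groups $H^q(Y, T_{\BP^N|_Y}(s))$ vanish except in the boundary degree $q = N-2$ (which feeds into the $(N-3, 2d-N-1)$ exception). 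Assembling these vanishing statements and tracking connecting maps in the long exact sequence should leave nonzero cohomology precisely at $(q,p) = (1,-1)$, coming from the $(1,-1)$ exception of Lemma \ref{l.PtoY}, and at $(q,p) = (N-3, 2d-N-1)$, which arises as the boundary contribution of the shifted tangent-bundle term $T_{\BP^N|_Y}(p-d)$ when the degree $p-d = d - N - 1$ lands in the single surviving cohomological slot.

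The main obstacle I anticipate is bookkeeping at the two boundary degrees $q = N-2, N-3$, where the long exact sequence no longer forces an automatic vanishing and where the exceptional groups of Lemmas \ref{l.P2} and \ref{l.PtoY} (the $(N-2, d-N)$ class, and the $N$-th cohomology of $\Omega_{\BP^N}^{T_{N-1}}$) conspire. I would handle this by carefully checking, via Bott's formula and the degree hypotheses, that the connecting homomorphisms into and out of the suspicious terms are either isomorphisms or zero, so that the only surviving classes are the two listed exceptions. Everywhere else the combination of Lemma \ref{l.PtoY} and the line-bundle vanishing should make the outer terms of the long exact sequence vanish, giving $H^q(Y, T_{\BP^N|_Y}(p) \otimes \Omega_Y) = 0$ in the stated range outside $(1,-1)$ and $(N-3, 2d-N-1)$.
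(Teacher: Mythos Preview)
Your approach is essentially the paper's: tensor the conormal sequence $0 \to \sO_Y(-d) \to \Omega_{\BP^N|_Y} \to \Omega_Y \to 0$ by $T_{\BP^N|_Y}(p)$, identify the middle term with $T_{\BP^N}\otimes\Omega_{\BP^N}(p)|_Y$, and apply Lemma~\ref{l.PtoY}. The one difference is in handling the left-hand term $T_{\BP^N|_Y}(p-d)$. The paper observes directly that $T_{\BP^N|_Y}(p-d)\cong \Omega^{N-1}_{\BP^N}(N+1+p-d)|_Y$ and reads off from Lemma~\ref{l.OmegaPY} that $H^{q}(Y,T_{\BP^N|_Y}(p-d))=0$ for $0\le q\le N-2$ unless $(q,p)=(N-2,\,2d-N-1)$. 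Your Euler-sequence route gives the vanishing for $0\le q\le N-3$ immediately, but at $q=N-2$ you must still check that the connecting map $H^{N-1}(Y,\sO_Y(p-d))\to H^{N-1}(Y,\sO_Y(p-d+1))^{\oplus(N+1)}$ is injective for $p\neq 2d-N-1$; this amounts (by Serre duality) to surjectivity of the multiplication map $H^0(\sO_Y(d-N-2-s))^{\oplus(N+1)}\to H^0(\sO_Y(d-N-1-s))$, which holds except when $s=p-d=d-N-1$. That step is routine but should be made explicit; the paper's $\Omega^{N-1}$ identification sidesteps it entirely.
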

\begin{proof}

Note that $ T_{\BP^N|_Y}(p-d) \simeq \Omega_{\BP^N}^{N-1}(N+1+p-d)|_Y$, hence by Lemma \ref{l.OmegaPY},
if $p \leq -1$, then $H^q(Y, T_{\BP^N|_Y}(p-d)) = 0$ for all $0 \leq q \leq N-2$ unless $(q, p) = (N-2, 2d-N-1)$.

From the exact sequence $0 \to \sO_Y(-d) \to \Omega_{\BP^N|_Y} \to \Omega_Y \to 0$, we have
$$
0 \to T_{\BP^N|_Y}(p-d) \to T_{\BP^N} \otimes \Omega_{\BP^N}(p)|_Y \to T_{\BP^N|_Y}(p)\otimes \Omega_{Y} \to 0.
$$

Now the claim follows from Lemma \ref{l.PtoY}.
\end{proof}

\begin{proposition}\label{p.Y}
Assume $p \leq -1$. Then

(i) $ H^0(Y, T_Y \otimes \Omega_Y(p)) = 0$.
%\begin{cases} 0 \ \ \text{if} \ p \leq -1 \\  N+1 \ \text{if} \ p=1 \end{cases}

(ii) $ H^1(Y, T_Y \otimes \Omega_Y(p))=0$ if $p+d \leq 1$;

(iii) $ H^2(Y, T_Y \otimes \Omega_Y(p)) = 0 $ if $p+d \neq 0$;

(iv) For all $3 \leq q \leq N-4$,  $ H^q(Y, T_Y \otimes \Omega_Y(p)) =0$.

(v) $H^{N-3}(Y, T_Y \otimes \Omega_Y(p)) =0$ if $p \neq 2d-N-1$.
\end{proposition}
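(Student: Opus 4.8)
The plan is to reduce every part to the two vanishing results already established on $Y$, namely Lemma \ref{l.TPY} for the restricted tangent bundle and Lemma \ref{l.OmegaY} for $\Omega_Y$, by means of the relative tangent sequence of $Y \subset \BP^N$. Since $Y$ is a hypersurface of degree $d$, its normal bundle is $\sO_Y(d)$, so tensoring $0 \to T_Y \to T_{\BP^N}|_Y \to \sO_Y(d) \to 0$ with $\Omega_Y(p)$ yields
$$0 \to T_Y \otimes \Omega_Y(p) \to T_{\BP^N|_Y}(p) \otimes \Omega_Y \to \Omega_Y(p+d) \to 0.$$
The middle term is exactly the bundle treated in Lemma \ref{l.TPY}, while the quotient $\Omega_Y(p+d)$ is governed by Lemma \ref{l.OmegaY}. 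Writing $A$, $B$, $C$ for the three terms, the associated long exact sequence sandwiches $H^q(A)$ between $H^{q-1}(C)$ and $H^q(B)$, so it suffices to arrange the vanishing of these two flanking groups in each degree.

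Concretely, I would run through the degrees as follows. For (i), $H^{-1}(C)=0$ trivially and $H^0(B)=0$ by Lemma \ref{l.TPY} (the degree $q=0$ avoids both exceptional pairs $(1,-1)$ and $(N-3,2d-N-1)$), giving $H^0(A)=0$. For (ii), the hypothesis $p+d\leq 1$ forces $H^0(C)=H^0(Y,\Omega_Y(p+d))=0$ by Lemma \ref{l.OmegaY}(i); moreover $p+d\leq 1$ together with $d\geq 3$ gives $p\leq -2$, so the exceptional case $(1,-1)$ of Lemma \ref{l.TPY} cannot occur and $H^1(B)=0$. For (iii), the hypothesis $p+d\neq 0$ kills $H^1(C)$ through the formula $\dim H^1(Y,\Omega_Y(p+d))=\delta_{p+d,0}$ coming from Lemma \ref{l.OmegaY}(ii), while $H^2(B)=0$ by Lemma \ref{l.TPY}. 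For (iv), in the range $3\leq q\leq N-4$ the term $H^{q-1}(C)$ vanishes because $q-1\geq 2$ makes the factor $\delta_{q-1,1}$ in Lemma \ref{l.OmegaY}(ii) zero, and $H^q(B)=0$ since $q<N-3$ avoids the second exceptional pair. Finally for (v), $H^{N-4}(C)=0$ for the same reason as in (iv), and $H^{N-3}(B)=0$ precisely when $p\neq 2d-N-1$, which is the stated hypothesis.

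The step I expect to demand the most care is the bookkeeping of exceptional indices: the numerical hypotheses in (ii)--(v) are tuned exactly to dodge the nonvanishing cases $(q,p)=(1,-1)$ and $(N-3,2d-N-1)$ in Lemma \ref{l.TPY} and the single value $p+d=0$ in Lemma \ref{l.OmegaY}, and one must confirm that in each statement the relevant cohomological degree does not collide with a forbidden index. This is automatic in the large-$N$ regime relevant to Proposition \ref{p.XiZV}, where the codimension and the gaps $m_i\geq d+2$ force $N$ to be big, so that the degrees $0,1,2,N-3$ are distinct and the range $3\leq q\leq N-4$ is genuinely interior; the few small-$N$ boundary coincidences would have to be inspected separately, but they lie outside the situation needed later.
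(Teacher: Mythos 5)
Your proposal is correct and takes essentially the same route as the paper: the paper's proof also tensors the normal bundle sequence $0 \to T_Y \to T_{\BP^N}|_Y \to \sO_Y(d) \to 0$ with $\Omega_Y(p)$ to get $0 \to T_Y \otimes \Omega_Y(p) \to T_{\BP^N|_Y}\otimes \Omega_Y(p) \to \Omega_Y(p+d) \to 0$, and then reads off all five vanishings from Lemma \ref{l.TPY} and Lemma \ref{l.OmegaY} via the long exact sequence. Your degree-by-degree bookkeeping of the exceptional pairs (including the caveat about small-$N$ coincidences, which the paper's two-line proof leaves implicit) matches the intended argument.
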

\begin{proof}
By Lemma \ref{l.OmegaY}, we have $H^0(Y, \Omega_Y(p+d))=0$ if $p+d \leq 1$ and
 $H^q(Y, \Omega_Y(p+d))=0$ for all $1 \leq q \leq N-3$ unless $(q, p) = (1, -d)$.

From the exact sequence $0 \to T_Y \to T_{\BP^N|_Y} \to \sO(d) \to 0$ we obtain
$$
0 \to T_Y \otimes \Omega_Y(p) \to T_{\BP^N|_Y} \otimes \Omega_Y(p) \to \Omega_Y(p+d) \to 0.
$$
Now the claim follows from Lemma \ref{l.TPY}.
\end{proof}

\begin{proposition} \label{p.dim=V}
For a smooth projective hypersurface $Y \subset \BP V$ of degree $\geq 3$, we have
$H^0(Y, T_Y \otimes \Omega_Y (1)) \simeq V^{\vee}$.
\end{proposition}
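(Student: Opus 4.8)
The plan is to compute $H^0(Y, T_Y \otimes \Omega_Y(1))$ by running the same exact-sequence bookkeeping used in Proposition \ref{p.Y}, but now tracking the degree-$1$ twist carefully enough to pin down the dimension rather than merely prove vanishing. First I would establish that $H^0(Y, T_Y \otimes \Omega_Y(1))$ contains $V^\vee$: the subsheaf of traceless endomorphisms gives a short exact sequence
$$
0 \to \sO_Y(1) \to T_Y \otimes \Omega_Y(1) \to \mathrm{ad}(T_Y)(1) \to 0,
$$
and since $Y$ is a nondegenerate hypersurface we have $H^0(Y, \sO_Y(1)) \cong V^\vee$ by linear normality. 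So it suffices to show that the trace splitting is essentially the whole story, i.e.\ that the image of global sections in $H^0(Y, \mathrm{ad}(T_Y)(1))$ vanishes (equivalently, that there is no nonzero traceless global section). That reduces the proposition to the purely cohomological claim $H^0(Y, T_Y \otimes \Omega_Y(1)) \hookrightarrow H^0(Y, \sO_Y(1))$ being an isomorphism, which by the sequence above amounts to showing $H^0(Y, \mathrm{ad}(T_Y)(1)) = 0$ together with the connecting map $H^0(\mathrm{ad}(T_Y)(1)) \to H^1(\sO_Y(1))$ not being needed on the left.

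The core computation I would carry out mirrors the proof of Proposition \ref{p.Y} verbatim, keeping $p = 1$. From the normal-bundle sequence $0 \to T_Y \to T_{\BP^N|_Y} \to \sO_Y(d) \to 0$ twisted by $\Omega_Y(1)$ I get
$$
0 \to T_Y \otimes \Omega_Y(1) \to T_{\BP^N|_Y} \otimes \Omega_Y(1) \to \Omega_Y(1+d) \to 0,
$$
so on $H^0$ I need to understand $H^0(Y, T_{\BP^N|_Y} \otimes \Omega_Y(1))$ and the boundary into $H^0(Y, \Omega_Y(1+d))$. The middle term is itself handled by the sequence $0 \to T_{\BP^N|_Y}(1-d) \to T_{\BP^N}\otimes\Omega_{\BP^N}(1)|_Y \to T_{\BP^N|_Y}(1)\otimes\Omega_Y \to 0$ from Lemma \ref{l.TPY}'s proof, and $T_{\BP^N}\otimes\Omega_{\BP^N}(1)|_Y$ descends via Lemma \ref{l.PtoY} to the ambient groups $H^\bullet(\BP^N, T_{\BP^N}\otimes\Omega_{\BP^N}(1))$ and $(\cdots)(1-d)$, which Lemma \ref{l.P2} evaluates: the untwisted-at-$k=1>0$ part contributes $H^0(\BP^N, T_{\BP^N}\otimes\Omega_{\BP^N}(1))$, and here the isomorphism of Lemma \ref{l.P2}(i) with $H^0(\sO(k))$ only holds for $k \leq 0$, so I would instead split off the trace and use $H^0(\mathrm{ad}(T_{\BP^N})(1)) = H^0(\BP^N, \Omega_{\BP^N}^{T_{N-1}}(N+2))$, which by Lemma \ref{l.P}(i)(1) with $p = N+2 \geq (N-1)+3$ is nonzero. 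This is exactly where the degree hypothesis enters: the ambient $\mathrm{ad}$-contribution is killed upon restricting to $Y$ and twisting down by $d \geq 3$, and I would chase through the Koszul/normal sequences to verify that every potentially surviving term either lands in a $p \leq r$ range eliminated by Lemma \ref{l.OmegaPY}(i) or a $1 \leq q \leq N-2$ range controlled by Lemma \ref{l.OmegaPY}(ii), Lemma \ref{l.OmegaY}, and Lemma \ref{l.PtoY}.

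The main obstacle, and the step I would treat most carefully, is the boundary map $\partial\colon H^0(Y, \sO_Y(d)\otimes\Omega_Y(1)) = H^0(Y, \Omega_Y(1+d)) \to H^1(Y, T_Y\otimes\Omega_Y(1))$ together with its mirror for the ambient restriction: I must show the global sections of $T_{\BP^N|_Y}\otimes\Omega_Y(1)$ inject into (in fact surject onto) the trace line $H^0(\sO_Y(1))$ and that no extra sections leak in from $\Omega_Y(1+d)$. Concretely the danger is that $H^0(Y, \Omega_Y(1+d))$ is large (it is nonzero once $1+d$ is big) and could in principle pull back traceless sections into the middle term; I expect to rule this out by checking, via the commuting square of normal and Euler sequences, that any section of the middle mapping nontrivially to $\Omega_Y(1+d)$ would force a nonzero class in $H^0(\BP^N,\mathrm{ad}(T_{\BP^N})(1))|_Y$ that does not restrict from $Y$—an incompatibility forced by the degree-$d$ twist with $d \geq 3$. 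Once the only surviving contribution is the trace subline, I conclude $H^0(Y, T_Y\otimes\Omega_Y(1)) \cong H^0(Y,\sO_Y(1)) \cong V^\vee$, which is the assertion. I would double-check the boundary-vanishing against the $d = 3$, small-$N$ boundary cases by hand, since those are where Bott-type vanishing is tightest.
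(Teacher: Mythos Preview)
Your opening move is exactly the paper's: the trace splitting
\[
0 \to \sO_Y(1) \to T_Y \otimes \Omega_Y(1) \to \mathrm{ad}(T_Y)(1) \to 0
\]
reduces the proposition to $H^0(Y,\mathrm{ad}(T_Y)(1))=0$. But at this point the paper finishes in one line, and you instead veer into a long normal-bundle chase that you do not actually complete. The paper's observation is that $T_Y \cong \Omega_Y^{N-2}\otimes K_Y^\vee$ with $K_Y \cong \sO_Y(d-N-1)$, so $\mathrm{ad}(T_Y)(1) \cong \Omega_Y^{T_{N-2}}(N+2-d)$, and then the vanishing $H^0(Y,\Omega_Y^{T_{N-2}}(N+2-d))=0$ for $d\ge 3$ is Theorem~4(iii) of \cite{BR}. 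That is the entire argument.

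Your alternative route---pushing $p=1$ through the sequences of Proposition~\ref{p.Y}, Lemma~\ref{l.TPY}, and Lemma~\ref{l.PtoY}---is not wrong in principle, but it is far heavier and, as written, incomplete. The step you flag as the ``main obstacle'' really is one: at $p=1$ the intermediate groups $H^0(\BP^N,T_{\BP^N}\otimes\Omega_{\BP^N}(1))$, $H^0(Y,T_{\BP^N|Y}\otimes\Omega_Y(1))$, and $H^0(Y,\Omega_Y(1+d))$ are all large, and getting the answer requires you to identify the maps between them, not merely their dimensions. Your proposal only says ``I expect to rule this out'' via a commuting square, without specifying the square or verifying the cancellation. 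Carrying that out would amount to reproving, by hand on a hypersurface, precisely the Young-tableau vanishing that \cite{BR} supplies. The missing idea in your write-up is the identification of $\mathrm{ad}(T_Y)(1)$ with a $T$-symmetric form bundle whose $H^0$ vanishing is already in the literature; once you have that, no diagram chase is needed.
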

\begin{proof}
We have an exact sequence
$$ 0 \to \sO_Y(1)  \to T_Y \otimes \Omega_Y(1) \to ad(T_Y)(1) \to 0.$$

Note that $ad(T_Y)(1) \simeq \Omega_Y^{T_{N-2}}(N+2-d)$.
By Theorem 4(iii) \cite{BR}, we have $H^0(Y, \Omega_Y^{T_{N-2}}(N+2-d))=0$ if $d\geq 3$, which
implies the claim.
\end{proof}

Now, let  $Z \subset Y$ be a complete intersection of multi-degree $[m_1, \cdots, m_c]$, where $c = {\rm codim}_Y(Z) = N -1 - n$ with $n =\dim Z$.  We always assume $\dim Z \geq 2$, namely $c \leq N-3$.

\begin{lemma} \label{l.key}
Assume that $m_j \geq d$ for all $j$, then
 $$ H^0(Z, T_Y \otimes \Omega_Y(1) |_Z) \simeq H^0(Y, T_Y \otimes \Omega_Y(1))$$
\end{lemma}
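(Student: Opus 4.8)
The plan is to compare the two cohomology groups by cutting $Z$ out of $Y$ step by step, using the hypersurfaces of degrees $m_1, \ldots, m_c$ one at a time and controlling the relevant vanishing at each stage. Writing $\sE := T_Y \otimes \Omega_Y(1)$ for brevity, I want to show $H^0(Z, \sE|_Z) \cong H^0(Y, \sE)$. Since $Z \subset Y$ is a complete intersection of the ample divisors of degrees $m_1, \ldots, m_c$, I would introduce a chain of smooth varieties $Y = Y_0 \supset Y_1 \supset \cdots \supset Y_c = Z$, where $Y_j$ is cut from $Y_{j-1}$ by a hypersurface of degree $m_j$, and restrict $\sE$ successively. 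At each step I have the short exact sequence on $Y_{j-1}$
\[
0 \longrightarrow \sE|_{Y_{j-1}}(-m_j) \longrightarrow \sE|_{Y_{j-1}} \longrightarrow \sE|_{Y_j} \longrightarrow 0,
\]
so taking cohomology, the restriction map $H^0(Y_{j-1}, \sE|_{Y_{j-1}}) \to H^0(Y_j, \sE|_{Y_j})$ is an isomorphism provided
\[
H^0(Y_{j-1}, \sE|_{Y_{j-1}}(-m_j)) = 0 = H^1(Y_{j-1}, \sE|_{Y_{j-1}}(-m_j)).
\]
Composing these isomorphisms would give the desired identification $H^0(Y, \sE) \cong H^0(Z, \sE|_Z)$.

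The heart of the matter is therefore to establish these two vanishing statements at every stage, and this is where I expect the main obstacle to lie. The hypothesis $m_j \geq d$ guarantees that the twist $-m_j$ is sufficiently negative, so I would try to reduce everything to Proposition \ref{p.Y}, which already computes $H^q(Y, T_Y \otimes \Omega_Y(p))$ for $p \leq -1$ in the relevant range of $q$. The difficulty is that at intermediate stages the sheaf lives on $Y_j$, not on $Y$, so I cannot apply Proposition \ref{p.Y} directly; instead I would peel off the restriction one hypersurface at a time, again via Koszul-type sequences, to express each $H^q(Y_j, \sE|_{Y_j}(p))$ in terms of groups $H^{q'}(Y, \sE(p'))$ with $p' = p - (\text{sum of some subset of the } m_i)$, which are then controlled by Proposition \ref{p.Y}. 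The key numerical input is that $m_j \geq d$ forces all these twists $p'$ to satisfy $p' \leq -1$ and, more delicately, to avoid the exceptional values $p'+d = 0$, $p' = 2d-N-1$ etc.\ flagged in Proposition \ref{p.Y}; verifying that the arithmetic of the $m_i$'s keeps us away from these bad values is the technical crux.

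Concretely, I would organize the bookkeeping by fixing $j$ and using Lemma \ref{l.vanish} applied to the Koszul resolution of $\sO_{Y_j}$ inside $Y$ (as in Lemma \ref{l.wedge} and Lemma \ref{l.OmegaPY}), tensored with $\sE(-m_j)$. This converts the needed vanishing $H^0(Y_j, \sE|_{Y_j}(-m_j)) = H^1(Y_j, \sE|_{Y_j}(-m_j)) = 0$ into a finite list of vanishing statements $H^{q'}(Y, \sE(p')) = 0$ with $p'$ ranging over $-m_j$ minus partial sums of $\{m_1, \ldots, m_{j-1}\}$ and $q'$ ranging over $0, \ldots, j$. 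Each such group is then killed by Proposition \ref{p.Y} once I check the hypotheses; since every $m_i \geq d \geq 3$, the twists are strictly negative and the condition $p'+d \leq 1$ (resp.\ $p' + d \neq 0$) needed in parts (ii) and (iii) holds because even the largest twist $-m_j$ already satisfies $-m_j + d \leq 0$. The boundary cases $q' = N-3$ and $q' = 2$ will require the explicit inequalities in Proposition \ref{p.Y}(iii) and (v), and confirming that the exceptional twist values are never hit—using $m_j \geq d$ together with $\dim Z \geq 2$—is the final step I expect to be the most delicate.
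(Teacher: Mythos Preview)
Your proposal is correct and follows essentially the same route as the paper: reduce the comparison of $H^0$'s to vanishing statements $H^q(Y, T_Y \otimes \Omega_Y(p)) = 0$ for suitable negative $p$ via the Koszul resolution of $\sO_Z$ on $Y$ together with Lemma \ref{l.vanish}, and then invoke Proposition \ref{p.Y} using $m_j \geq d$. The paper streamlines your step-by-step restriction by applying the full Koszul resolution in one shot (splitting off a kernel sheaf $B$ and showing $H^0(Y,B)=H^1(Y,B)=0$), which bypasses the intermediate varieties $Y_j$ but ends up checking exactly the same list of vanishings on $Y$.
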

\begin{proof}
To simplify the notation, put $\sF = T_{Y} \otimes \Omega_{Y}$.
By splitting the exact sequence in Lemma \ref{l.Koszul}, we have
$$
0 \to \sF(1-m) \to \oplus_{i=1}^c \sF(1-m+m_i) \to \cdots \to \oplus_{i=1}^c \sF(1-m_i) \to B \to 0
$$
$$
0 \to B \to  \sF(1) \to \sF(1)|_Z \to 0
$$
for some coherent sheaf $B$ on Y.  By Proposition \ref{p.Y},  we obtain the vanishing of the following cohomology groups
$$
H^0(Y,\sF(1-m_i)) = H^1(Y,\sF(1-m_i-m_j))=\cdots = H^{c-1}(Y, \sF(1-m)) = 0.
$$
Hence by Lemma \ref{l.vanish}, we get $H^0(Y, B)=0$. In a similar way, we get
$$
H^1(Y,\sF(1-m_i)) = H^2(Y, \sF(1-m_i-m_j))=\cdots = H^{c-1}(Y, \sF(1-m+m_i)) = 0
$$
Recall that $c \leq N-3$. If $c = N-3$, then $1-m \leq 1-d(N-3) < 2d-N-1$ since $N \geq 3$, hence we get
$H^{c}(Y,\sF(1-m))=0$ by Proposition \ref{p.Y}. This implies that $H^1(Y,B)=0$, concluding the proof.

\end{proof}

Now we are ready to finish the proof of Proposition \ref{p.XiZV}.

If $Z$ is a curve, then $H^0(Z, T_Z \otimes \Omega_Z(1))\simeq H^0(Z, \sO(1))$ has dimension $N+1$. Thus  we may assume
$\dim Z \geq 2$, namely $c \leq N-3$.

 By Lemma \ref{l.key}, it suffices to show that elements of $\Xi_Z \subset H^0(Z, T_Z \otimes \Omega_Z(1))$ comes from $H^0(Z, T_Y \otimes \Omega_Y(1)|_Z).$ Assume that $\phi \in \Xi_Z \subset \Hom( \wedge^2 V, V)$ is not in $H^0(Z, T_Y \otimes \Omega_Y(1)|_Z)$. Then the collection of homomorphisms $\phi_x: V\otimes \widehat{x} \to V$ at  $x \in Z$ defined by $v \otimes \alpha \mapsto \phi(\alpha, v), \alpha \in \widehat{x}$ induces a nonzero homomorphism $N_{Z/Y} \to N_{Y/\BP V}(1)|_Z$. In other words, a homomorphism of vector bundles on $Z$
$$\sO(m_1) \oplus \cdots \oplus \sO(m_c) \to  \sO( d +1).$$
But by the assumption that $d +1 < m_j$ for all $j$, this must be zero, a contradiction.

\section{Negative examples to Question \ref{q.Question}}\label{s.example}

In this section, we will provide examples of smooth projective
variety with isotrivial VMRT which are not quasi-homogeneous. They
are hyperplane sections of rational homogeneous varieties.

Let $G$ be a simple Lie group of adjoint
type of rank $l$ with Lie algebra $\fg$. We fix a Borel subgroup
$B \subset G$ and the set of roots is denoted by $\Phi$. Let $\{
\alpha_1, \cdots, \alpha_l\}$ be the set of simple roots and
$\Phi^+$ (resp. $\Phi^-$) the set of positive roots (resp.
negative roots). The fundamental weights are denoted by
$\{\lambda_1, \cdots, \lambda_l\}$.  Let $\rho = \frac{1}{2} \sum_{\alpha
\in \Phi^+} \alpha = \sum_{1}^l \lambda_i$ be the half sum of
positive roots.

For any $1 \leq k \leq l$, we denote by $P_k$ the maximal
parabolic subgroup determined by $\lambda_k$. Its Lie algebra
$\fp_k$ is given by
$$
\fp_k = \Phi^{-} \cup \{0\} \cup \Phi^+_k,  \text{where\  }
\Phi^+_k = \{\alpha \in \Phi^+| (\alpha, \lambda_k)=0\}.
$$

Let $L^{\lambda_k}$ be the line bundle defined by the character
$\lambda_k$ on the homogeneous manifold $G/P_k$. Then ${\rm
Pic}(G/P_k) \simeq \mathbb{Z} L^{\lambda_k}$. The line bundle
$L^{\lambda_k}$ is very ample and it induces a natural embedding
$G/P_k \subset \mathbb{P} H^0(G/P_k, L^{\lambda_k}) = \BP
V^{\lambda_k}$, where $V^{\lambda_k}$ is the irreducible
$G$-module with highest weight $\lambda_k$.

The tangent bundle $T_{G/P_k}$ is the homogeneous vector bundle associated to the adjoint representation of $G$ restricted to $P_k$ on the quotient $\fg/\fp_k$, hence the weights of this representation of $P$ are the roots of $G$ that are not in $P$, i.e. roots of $G$ that contain $\alpha_k$ with multiplicity at least 1.  We call $G/P_k$ an {\em irreducible Hermitian symmetric space} (IHSS for short) if the $P_k$-representation $\fg/\fp_k$ is irreducible. They are classified as follows: 

\begin{table}[!hbp]  \label{t}
\begin{tabular}{|c|c|c|c|c|c|c|}
\hline
$\fg$ & $A_l$ & $B_l$ & $C_l$ & $D_l$ & $E_6$  & $E_7$ \\
\hline
$k$ & $1 \leq k \leq l$& 1 & $l$ & $1, l-1, l$ & 1, 6 & 7 \\
\hline
$G/P_k$ & ${\rm Gr}(k, l+1)$ & $\mathbb{Q}^{2l-1}$ & ${\rm Lag}(l, 2l)$ & $\mathbb{Q}^{2l-2}, \mathbb{S}_l$ & $\mathbb{OP}^2$ & $E_7/P_7$\\
\hline
\end{tabular}
\end{table}

In this case, the highest weight of $\fg/\fp_k$ is  the longest root $\beta$.  In the notations of Bourbaki, the longest roots are given by the following:
\begin{table}[!hbp]
\begin{tabular}{|c|c|c|c|c|c|}
\hline
$\fg$ & $A_l$ & $C_l (l \geq 2)$ & $F_4, E_7$ & $B_l (l \geq 3), D_l (l \geq 4), G_2, E_6$ & $E_8$ \\
\hline
$\beta$ & $\lambda_1 + \lambda_l$ & $2 \lambda_1$ & $\lambda_1$ & $\lambda_2$ & $\lambda_8$ \\
\hline
\end{tabular}
\end{table}

\begin{lemma} \label{l.vanishF}
Assume that $G/P_k$ is an IHSS different from projective spaces, then
$H^q(G/P_k, T_{G/P_k} (-1))=0$ for all $q \geq 0$.
\end{lemma}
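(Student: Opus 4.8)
The plan is to apply the Borel--Weil--Bott theorem, whose only real input here is that $T_{G/P_k}$ is an \emph{irreducible} homogeneous vector bundle precisely when $G/P_k$ is an IHSS. As recalled just before the statement, $T_{G/P_k}$ is the homogeneous bundle attached to the $P_k$-representation $\fg/\fp_k$, and the IHSS hypothesis says exactly that this representation is irreducible. Its highest weight, relative to the Levi factor, is the longest root $\beta$, since the highest weight vector is the root vector of $\beta$ (and $\beta$ is Levi-dominant, being $G$-dominant). Tensoring with $\sO(-1)=L^{-\lambda_k}$ merely shifts the highest weight by $-\lambda_k$, so $T_{G/P_k}(-1)$ is the irreducible homogeneous bundle with Levi-highest weight $\mu := \beta-\lambda_k$, which is still Levi-dominant because $\langle\mu,\alpha_i^{\vee}\rangle=\langle\beta,\alpha_i^{\vee}\rangle\ge 0$ for $i\neq k$.

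Next I would invoke the Bott dichotomy: writing $\rho$ for the half sum of positive roots, if $\mu+\rho$ is singular, i.e. $\langle\mu+\rho,\alpha^{\vee}\rangle=0$ for some root $\alpha$, then $H^q(G/P_k,T_{G/P_k}(-1))=0$ for \emph{every} $q$; otherwise exactly one cohomology group is nonzero. Thus the entire lemma reduces to exhibiting a single root orthogonal to $\beta-\lambda_k+\rho$. The natural candidate is the simple coroot $\alpha_k^{\vee}$: using $\langle\lambda_k,\alpha_k^{\vee}\rangle=1$ and $\langle\rho,\alpha_k^{\vee}\rangle=1$, the two correction terms cancel and
\[
\langle\,\beta-\lambda_k+\rho,\ \alpha_k^{\vee}\,\rangle=\langle\beta,\alpha_k^{\vee}\rangle .
\]
So it remains to verify $\langle\beta,\alpha_k^{\vee}\rangle=0$ in each IHSS case other than projective space.

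This is a direct check against the table of longest roots. For $A_l$ with $\beta=\lambda_1+\lambda_l$ one gets $\langle\beta,\alpha_k^{\vee}\rangle=\delta_{1k}+\delta_{lk}$, which vanishes precisely when $2\le k\le l-1$, that is, away from the two projective-space cases $k=1,l$. For the remaining types one reads off $\beta=\lambda_2$ (types $B_l,D_l,E_6$), $\beta=2\lambda_1$ (type $C_l$), or $\beta=\lambda_1$ (type $E_7$); in each case the relevant index $k\in\{1,l-1,l,6,7\}$ is never the index of the fundamental weight appearing in $\beta$ (for $D_l$ one uses $l\ge 4$, so $l-1\neq 2$), whence the pairing is $0$. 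Therefore $\beta-\lambda_k+\rho$ lies on the wall fixed by the simple reflection $s_k$, and Bott's theorem yields the vanishing for all $q$ at once.

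The part deserving the most care is not any individual computation but the bookkeeping: confirming that the Levi-highest weight of $\fg/\fp_k$ is genuinely $\beta$ rather than a lower weight, and that the unique failing case $\langle\beta,\alpha_k^{\vee}\rangle\neq 0$---namely $A_l$ with $k=1$ or $k=l$---is exactly the projective space excluded by hypothesis, matching the known nonvanishing $H^0(\BP^l,T_{\BP^l}(-1))\neq 0$ coming from the Euler sequence. Once the singularity of $\beta-\lambda_k+\rho$ is in place, no further estimates are needed.
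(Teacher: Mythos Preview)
Your argument is correct and follows the same route as the paper: identify the highest weight of $T_{G/P_k}(-1)$ as $\mu=\beta-\lambda_k$, observe that $\langle\mu+\rho,\alpha_k^{\vee}\rangle=\langle\beta,\alpha_k^{\vee}\rangle$, and conclude singularity of $\mu+\rho$ via Borel--Weil--Bott. The only difference is presentational: the paper compresses your case-by-case verification into the single remark that ``$\mu$ has coefficient $-1$ at $\lambda_k$'' (equivalently $\langle\beta,\alpha_k^{\vee}\rangle=0$), which it reads off directly from the table of longest roots given just above the lemma, whereas you spell out each type explicitly.
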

\begin{proof}
As the maximal weight of $\fg/\fp_k$ is $\beta$, the maximal weight of $T_{G/P_k} (-1)$ is $\mu:=\beta - \lambda_k$.  By our assumption, $\mu$ has coefficient $-1$ at $\lambda_k$, hence  $\mu
+ \rho$ is a singular weight (since $(\alpha_k, \mu+\rho) = 0$).
By Borel-Weil-Bott, we have $H^q(G/P_k, T_{G/P_k} (-1))=0$ for all $q$.
\end{proof}

From now on, we assume that ${\rm Aut}^\circ (G/P_k) = G$. This is
always the case up to replacing $G$ and $P_k$. Let $X \subset
G/P_k$ be a smooth hyperplane section, i.e. $X = G/P_k \cap H$
for a smooth hyperplane $[H] \in \BP (V^{\lambda_k})^{\vee}$.
 Then we have
\begin{lemma} \label{l.stab} Assume that $G/P_k$ is an IHSS different from projective spaces, then

(i) $H^0(X, T_X)$ is identified with the stabilizer of $G$ at $[H]
\in \BP(V^{\lambda_k})^{\vee}$.

(ii) $h^1(X, T_X) = \dim V^{\lambda_k}-1 + h^0(X, T_X) - \dim
\fg$.

(iii) $h^p(X, T_X)=0$ for all $p \geq 2$.
\end{lemma}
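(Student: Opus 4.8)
The plan is to analyze the smooth hyperplane section $X = G/P_k \cap H$ via the cohomology of its tangent bundle, reducing everything to the ambient homogeneous space $G/P_k$ whose cohomology is controlled by Borel--Weil--Bott. Write $W = G/P_k$ for brevity and let $\sO(1) = L^{\lambda_k}$. The starting point is the normal bundle sequence for the smooth divisor $X \subset W$, namely
\begin{equation*}
0 \longrightarrow T_X \longrightarrow T_W|_X \longrightarrow \sO_X(1) \longrightarrow 0,
\end{equation*}
together with the restriction sequence $0 \to T_W(-1) \to T_W \to T_W|_X \to 0$ obtained by tensoring $0 \to \sO_W(-1) \to \sO_W \to \sO_X \to 0$ with $T_W$. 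The first key input is Lemma \ref{l.vanishF}, which gives $H^q(W, T_W(-1)) = 0$ for all $q$; this immediately yields isomorphisms $H^q(W, T_W) \cong H^q(X, T_W|_X)$ for every $q$. Thus the cohomology of $T_W|_X$ is identified with the cohomology of the tangent bundle of the homogeneous space, which is itself computed by Borel--Weil--Bott: since $W$ is an IHSS with $\Aut^\circ(W) = G$, one has $H^0(W, T_W) = \fg$ and $H^q(W, T_W) = 0$ for $q \geq 1$.

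From here I would run the long exact sequence in cohomology attached to $0 \to T_X \to T_W|_X \to \sO_X(1) \to 0$. The relevant outside terms are the groups $H^q(X, \sO_X(1))$, which I would compute from $0 \to \sO_W \to \sO_W(1) \to \sO_X(1) \to 0$ using $H^0(W, \sO_W(1)) = V^{\lambda_k}$, $H^0(W, \sO_W) = \C$, and the vanishing of higher cohomology of $\sO_W$ and $\sO_W(1)$ on the homogeneous variety; this gives $H^0(X, \sO_X(1)) = \dim V^{\lambda_k} - 1$ and $H^q(X, \sO_X(1)) = 0$ for $q \geq 1$. Feeding these into the long exact sequence, the piece
\begin{equation*}
0 \to H^0(X, T_X) \to H^0(X, T_W|_X) \to H^0(X, \sO_X(1)) \to H^1(X, T_X) \to H^1(X, T_W|_X) \to \cdots
\end{equation*}
with $H^0(X, T_W|_X) = \fg$, $H^1(X, T_W|_X) = 0$, and the computed $H^q(X, \sO_X(1))$ should yield all three statements at once: part (iii) follows because both neighboring groups $H^p(X, T_W|_X)$ and $H^{p-1}(X, \sO_X(1))$ vanish for $p \geq 2$; part (ii) is the Euler-characteristic bookkeeping of the segment ending at $H^1(X, T_X)$, giving $h^1(X, T_X) = \dim V^{\lambda_k} - 1 + h^0(X, T_X) - \dim \fg$; and part (i) identifies $H^0(X, T_X)$ as the kernel of the map $\fg = H^0(W, T_W) \to H^0(X, \sO_X(1))$, which is precisely the infinitesimal stabilizer of the hyperplane class $[H]$ under the $G$-action on $\BP(V^{\lambda_k})^{\vee}$.

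For part (i) I would make the identification of the coboundary map explicit: the connecting map $H^0(X, T_W|_X) \to H^0(X, \sO_X(1))$ is, under the above identifications, the derivative of the $G$-action sending $\xi \in \fg$ to the infinitesimal displacement of the section cutting out $X$, so its kernel is exactly $\{\xi \in \fg : \xi \cdot [H] = 0\}$, the Lie algebra of $\mathrm{Stab}_G([H])$. The main obstacle I anticipate is not any single vanishing statement but verifying that the edge map $H^0(W, T_W) \to H^0(X, \sO_X(1))$ really coincides with the differential of the $G$-action on the dual projective space rather than merely having the same source and target; this requires tracing through the canonical identifications (the Euler-type description of sections of $T_W$ as infinitesimal automorphisms, and the pairing of $V^{\lambda_k}$ with its dual) carefully. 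A secondary technical point is confirming that all the higher cohomology vanishings on $W$ used above follow cleanly from Borel--Weil--Bott and from Lemma \ref{l.vanishF}, so that no stray terms survive to contaminate the long exact sequence.
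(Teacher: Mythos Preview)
Your proposal is correct and follows essentially the same route as the paper: the same three short exact sequences (restriction of $T_W$ along $0\to\sO_W(-1)\to\sO_W\to\sO_X\to 0$, the sequence $0\to\sO_W\to\sO_W(1)\to\sO_X(1)\to 0$, and the normal bundle sequence), combined with Lemma~\ref{l.vanishF} and Borel--Weil--Bott. Your treatment of part~(i) is in fact more careful than the paper's, which simply asserts the identification; one minor terminological slip is that the map $H^0(X,T_W|_X)\to H^0(X,\sO_X(1))$ you analyze is the induced map from the sheaf surjection $T_W|_X\to N_{X/W}$, not a connecting homomorphism.
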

\begin{proof}
By the exact sequence
$$
0 \to \sO_{G/P_k} \to \sO_{G/P_k}(1) \to \sO_X(1) \to 0
$$
we get that $h^0(X, \sO_X(1)) = \dim V^{\lambda_k} -1$ and $h^p(X,
\sO_X(1))=0$ for all $p \geq 1$.  Using the exact
sequence
$$
0 \to T_{G/P_k}(-1) \to T_{G/P_k} \to T_{G/P_k}|_X \to 0,
$$
we obtain $$ h^0(X, T_{G/P_k}|_X)= h^0(G/P_k, T_{G/P_k}) =
\fg$$ and $h^p(X, T_{G/P_k}|_X) = 0$ for all $p \geq 1$.  The exact sequence
$$
0 \to T_X \to T_{G/P_k}|_X \to  \sO_X(1) \to 0
$$
gives $$H^0(X, T_X) \subset H^0(X, T_{G/P_k}|_X) = H^0(G/P_k,
T_{G/P_k}),$$  which proves the first claim, while the other two
follow from previous discussions.
\end{proof}

For an irreducible $G$-module $V$, there exists an open subset $U
\subset V$ such that $\dim G\cdot v$ remains the same (and maximal
for all $G$-orbits in $V$). We denote by $m_G(V)$ this dimension.
Then stabilizer of $v \in U$ has dimension $\dim G - m_G(V)$.

\begin{proposition}[\cite{AVE}, Corollary on p.260 ] \label{p.dim}
Assume that $G$ is simple and $V$ an irreducible representation of  $G$.
If $\dim V > \dim G$, then $m_G(V) = \dim G$.
\end{proposition}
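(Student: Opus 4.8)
The plan is to reformulate the statement in terms of the generic (principal) isotropy subgroup and then reduce it to a finiteness assertion. For generic $v \in V$ the orbit $G\cdot v$ has dimension $\dim G - \dim G_v$, and $m_G(V)$ is by definition the maximal orbit dimension, so $m_G(V) = \dim G - \dim S$, where $S = G_v$ is the generic stabilizer. Since $m_G(V)\le \dim G$ always, the equality $m_G(V)=\dim G$ is equivalent to the statement that $S$ is finite, i.e.\ that the infinitesimal isotropy $\fg_v = \ker(\mu_v)$ vanishes for generic $v$, where $\mu_v\colon \fg\to V$, $X\mapsto X\cdot v$, is the linear map whose image is the tangent space $T_v(G\cdot v)$. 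Thus it suffices to show: if $V$ is irreducible with $\dim V>\dim G$, then $\mu_v$ is injective for generic $v$.

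One convenient route records the dimension identity $\dim V = m_G(V) + \operatorname{tr.deg}_{\C}\C(V)^G$ coming from Rosenlicht's theorem on rational quotients; this turns the claim into the assertion that the field of rational invariants has the expected transcendence degree $\dim V-\dim G$, equivalently that generic orbits have the maximal possible dimension $\dim G$. I would also invoke the general principle that, whenever the generic orbits are closed, the principal isotropy subgroup is reductive (via Luna's slice theorem), so that the candidate subalgebras $\fg_v$ occurring for generic $v$ are reductive. This reductivity is what makes the classification of the possible $\fg_v$ tractable and is part of the standard setup in \cite{AVE}.

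The heart of the argument, which is the substance of \cite{AVE}, is the explicit determination of the generic stabilizer for every irreducible representation $V$ of a simple group $G$. Following Élashvili's method, one computes $\fg_v$ by restricting $V$ to a well-chosen reductive subgroup and inducting on the rank, thereby listing all irreducible $V$ for which $\dim \fg_v>0$; one then checks directly that every representation on this finite list satisfies $\dim V\le \dim G$. These are precisely the small modules (such as the defining, adjoint, and spin representations, together with a few exceptional fundamental modules). The main obstacle is exactly this case analysis: there is no purely dimension-theoretic shortcut ruling out a positive-dimensional generic stabilizer, since large \emph{reducible} representations certainly can have one, and one must genuinely use the structure of irreducible modules of simple groups to confine the positive-dimensional cases to the range $\dim V\le\dim G$.
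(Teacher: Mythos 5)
The paper never proves this proposition: it is quoted directly from \cite{AVE} (the Corollary on p.~260 of that note), so your attempt can only be measured against that source. Your opening reduction is correct and standard: orbit dimension is maximal on a dense open set, so $m_G(V)=\dim G-\dim G_v$ for generic $v$, and the claim is equivalent to finiteness of the generic stabilizer, i.e.\ to the vanishing of the generic infinitesimal stabilizer $\ker(\mu_v)$. The Rosenlicht identity is also correctly stated, though you never use it afterwards.

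The rest has two genuine problems. First, the reductivity step is unjustified: Luna's slice theorem applies at points with \emph{closed} orbits, and generic orbits of an irreducible representation need not be closed (${\rm SL}_2$ acting on $\C^2$ is already a counterexample: the generic orbit is $\C^2\setminus\{0\}$ and the generic stabilizer is unipotent). Indeed, by Popov's stability criterion, reductivity of the generic stabilizer is \emph{equivalent} to closedness of generic orbits, so assuming either one at the outset is circular. Second, and more seriously, what you call the heart of the argument is not an argument: you propose to classify all irreducible $V$ with positive-dimensional generic stabilizer and then check $\dim V\le\dim G$ on the resulting list, but you neither produce the list nor prove anything about it -- the assertion to be proven is exactly what gets outsourced. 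Your claim that this classification is ``the substance of \cite{AVE}'' and that no general argument exists is also backwards: the short 1967 note \cite{AVE} proves the corollary by a general argument (its main theorem), valid for all irreducible representations of semisimple groups at once, with no case lists; the explicit classification of generic stabilizers is \'Elashvili's later (1972) work -- the paper's reference \cite{El}, which is invoked for Proposition \ref{p.rigid}, not for this proposition. Your observation that irreducibility is essential is correct (for instance ${\rm SO}_n$ acting on $n-3$ copies of $\C^n$ has $\dim V>\dim G$ with generic stabilizer ${\rm SO}_3$ once $n\ge 6$), but that only shows a bare dimension count cannot suffice; it does not show that a case-by-case classification is required.
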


On the other hand, we have the following list of $G/P_k$ whose general hyperplane sections are rigid. We will use the notation of the homogeneous varieties in p.466 of \cite{FH}.

\begin{proposition}\label{p.rigid}
Let $G/P_k$ be  an IHSS  and
$X \subset G/P_k$  a general hyperplane section.  Then $X$ is
locally rigid (i.e. $H^1(X, T_X)=0$) if and only if $G/P_k$ is
isomorphic to one of the following:
$$\mathbb{P}^n,  \mathbb{Q}^n, {\rm Gr}(2, n), {\rm Gr}(3, 6), {\rm Gr}(3,
7),  \BS_5, \BS_6, $$ $$ \BS_7, {\rm Lag}(3, 6),
E_6/P_1, E_7/P_7.
$$
\end{proposition}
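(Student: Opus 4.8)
The plan is to recast the condition $H^1(X,T_X)=0$ as a statement about the $G$-action on the dual projective space, and then decide it case by case along the classification of IHSS. First I would feed the two halves of Lemma \ref{l.stab} into each other: part (ii) reads $h^1(X,T_X)=\dim V^{\lambda_k}-1+h^0(X,T_X)-\dim\fg$, while part (i) identifies $h^0(X,T_X)$ with $\dim\mathrm{Stab}_G([H])=\dim\fg-\dim(G\cdot[H])$ for the general point $[H]\in\BP(V^{\lambda_k})^{\vee}$ defining $X$. Substituting, the two copies of $\dim\fg$ cancel and I obtain
$$h^1(X,T_X)=\bigl(\dim V^{\lambda_k}-1\bigr)-\dim(G\cdot[H])=\codim_{\BP(V^{\lambda_k})^{\vee}}(G\cdot[H]).$$
Hence $X$ is locally rigid precisely when $G$ has a dense orbit on $\BP(V^{\lambda_k})^{\vee}$, i.e. when $V^{\lambda_k}$ is a prehomogeneous vector space for $G\times\C^{\times}$. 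Projective spaces, which are excluded from Lemmas \ref{l.vanishF} and \ref{l.stab}, are treated separately: a hyperplane section of $\BP^n$ is a $\BP^{n-1}$, which is rigid.

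The ``only if'' direction is then driven by Proposition \ref{p.dim}. If $\dim V^{\lambda_k}\ge\dim\fg+2$, applying the Andreev--Vinberg--Elashvili bound to the irreducible dual module gives $m_G((V^{\lambda_k})^{\vee})=\dim\fg$, so $\dim(G\cdot[H])\le\dim\fg<\dim V^{\lambda_k}-1$ and the codimension above is positive. A direct dimension count (using $\Gr(k,n)\cong\Gr(n-k,n)$ and the tables of longest roots) shows this inequality holds for every IHSS except the finite list $\BP^n,\Q^n,\Gr(2,n),\Gr(3,6),\Gr(3,7),\Gr(3,8),\Lag(3,6),\BS_5,\BS_6,\BS_7,E_6/P_1,E_7/P_7$; in particular it disposes of all $\Gr(k,n)$ with $k\ge 4$ or $n\ge 9$, of $\Lag(l,2l)$ with $l\ge 4$, and of $\BS_l$ with $l\ge 8$.

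For the survivors I would verify the dense-orbit condition representation by representation. The classical cases are immediate: $\BP^n$, the orthogonal module giving $\Q^n$ (dense complement of the quadric), and $\wedge^2 V$ giving $\Gr(2,n)$ (a general skew form of maximal rank). The remaining modules --- $\wedge^3\C^6$, $\wedge^3\C^7$, the $14$-dimensional module $V^{\lambda_3}$ of $\fsp_6$, the half-spin modules of $\fso_{10},\fso_{12},\fso_{14}$, and the minimal modules of $E_6$ and $E_7$ --- are exactly the irreducible regular prehomogeneous vector spaces recorded in the generic-isotropy tables of \cite{AVE}, so the general stabilizer has dimension $\dim\fg-\dim V^{\lambda_k}+1$, forcing $h^1=0$ and yielding the asserted rigid list. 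The one genuinely delicate point, and the main obstacle, is the borderline case $\Gr(3,8)$: here $\dim\wedge^3\C^8=56<63=\dim\fsl_8$, so Proposition \ref{p.dim} gives no information, yet $\wedge^3\C^8$ is \emph{not} prehomogeneous for $SL_8\times\C^{\times}$. Indeed it arises as a graded summand $\fg_1$ of a nontrivial grading of $\fe_8$ and carries a nonconstant invariant, so its general stabilizer has dimension strictly greater than $8$; the general orbit in $\BP(\wedge^3\C^8)$ then has positive codimension and $\Gr(3,8)$ is not rigid --- which is precisely why it is absent from the list. Establishing this last non-prehomogeneity cleanly, rather than merely quoting it, is where the real work of the argument lies.
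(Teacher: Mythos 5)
Your overall route is the same as the paper's: you combine Lemma \ref{l.stab}(i) and (ii) into $h^1(X,T_X)=\codim_{\BP(V^{\lambda_k})^{\vee}}\bigl(G\cdot[H]\bigr)$, so that local rigidity becomes the existence of a dense $G$-orbit on $\BP(V^{\lambda_k})^{\vee}$, then you kill the large modules with Proposition \ref{p.dim} and decide the finitely many survivors case by case. The paper performs that last check with Elashvili's tables of generic stabilizers (cited as \cite{El}; \cite{AVE} only supplies Proposition \ref{p.dim}), which is the same content as your prehomogeneity check in different clothing.

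The genuine gap is your exclusion of $\Gr(3,8)$, and it cannot be repaired, because the assertion it rests on is false: $(GL_1\times SL_8,\Lambda_3,\wedge^3\C^8)$ \emph{is} a prehomogeneous vector space. Both of your reasons prove the opposite of what you want. First, a nonconstant $SL_8$-invariant polynomial is fully compatible with a dense $GL_8$-orbit; it is then a \emph{relative} invariant, scaling under the $\C^{\times}$-factor, exactly as the quadratic form for $\Q^n$ or the quartic invariant for $E_7/P_7$ --- two cases you yourself place on the rigid list, so your criterion is internally inconsistent. Second, realizing $\wedge^3\C^8$ as the piece $\fg_1$ of a $\Z$-grading of $\fe_8$ with $\fg_0=\fgl_8$ \emph{implies} prehomogeneity rather than excluding it: by Richardson's theorem the Levi factor of a parabolic has a dense orbit on $\fg_1$ for any such grading. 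Concretely, $(GL_1\times SL_8,\Lambda_3)$ is one of the regular irreducible prehomogeneous spaces in the Sato--Kimura classification, with relative invariant of degree $16$: a generic trivector on $\C^8$ is the Cartan $3$-form of $\fsl_3$ under an identification $\C^8\cong\fsl_3$, and its stabilizer has identity component $PGL_3$, of dimension $8$. Feeding this into your own formula gives $h^1(X,T_X)=(56-1)+8-63=0$, so a general hyperplane section of $\Gr(3,8)$ \emph{is} locally rigid. In other words, your dimension count correctly kept $\Gr(3,8)$ as a survivor, and an honest check places it on the rigid list; note that the paper's own criterion ($\dim V^{\lambda_k}+\dim\fh=\dim\fg+1$, with $\fh=\fsl_3$ for $(\fsl_8,\Lambda_3)$ from Elashvili's table) reads $56+8=63+1$, so the printed list appears to omit $\Gr(3,8)$ as well --- matching it was not a sound target. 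Two smaller points: the generic-isotropy tables you invoke are in \cite{El}, not \cite{AVE}; and the half-spin space of $Spin_{10}$ is prehomogeneous but not \emph{regular} (its singular set has codimension bigger than one), though only prehomogeneity is needed there.
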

\begin{proof}
If $\dim V^{\lambda_k} > \dim G$, then $h^0(X, T_X)=0$ by
Proposition \ref{p.dim} and Lemma \ref{l.stab}. Hence $h^1(X,
T_X)=0$ if and only if $\dim V^{\lambda_k} -1 = \dim G$, while
there is no such fundamental representations.

Now assume $\dim V^{\lambda_k} \leq \dim G$.  In \cite{El} (Table 1 on p.46-48),  a
complete list of all irreducible $G$-modules with  $m_G(V) < \dim
G$ together with the stabilizer (denoted by $\mathfrak{h}$) is given.
Then $H^1(X, T_X)=0$ if and only if $\dim V^{\lambda_k} + \dim \mathfrak{h} = \dim \fg +1$.
The Proposition is
obtained by a case-by-case check.
\end{proof}

%
%\begin{remark}
%For the varieties in the list, their hyperplane sections are
%well-understood. General hyperplane sections of ${\rm Gr}(2, n)$
%are isomorphic to the symplectic Grassmannian ${\rm Gr}_\omega(2,
%n)$, which is homogeneous for $n$ even. General hyperplane
%sections of ${\rm Lag}(3, 6)$, ${\rm Gr}(3, 6)$, $\BS_6$ and
%$E_7/P_7$ are smooth equivariant completions of Picard number one
%of symmetric spaces (\cite{Ru}).  A general hyperplane section of
%$E_6/P_1$ is isomorphic to $F_4/P_1$, hence homogeneous. The
%hyperplane section of ${\rm Gr}(3, 7)$ should be related to
%symmetric varieties, since its codimension 6 section gives a
%smooth symmetric variety of Picard number one  (\cite{Ru}).  There
%are two more which are not clear:  hyperplane sections of
%$G_2/P_2$ and $F_4/P_1$, both are $\BP \sO_{min}$.
%\end{remark}

From the above results, we can deduce the following negative examples to Question \ref{q.Question}.

\begin{theorem}\label{t.example}
The following projective manifolds of Picard number 1 have isotrivial VMRT-structures, which is not locally homogeneous. In fact, they do not have continuous automorphism groups. \begin{itemize}
\item[(i)] A general hyperplane section of $\BS_n, n >8$: its  VMRT at a general point is a general hyperplane section of ${\rm Gr}(2, n)$.
\item[(ii)] A general hyperplane sections of ${\rm Lag}(n, 2n), n >4$: its VMRT at a general point is a general hyperplane section of   the second Veronese embedding of $\BP^n$. \end{itemize}
\end{theorem}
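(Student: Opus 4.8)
The plan is to realize each example as a general hyperplane section $X = G/P_k \cap H$ of an irreducible Hermitian symmetric space, and to verify three things: that $X$ has Picard number $1$, that its VMRT-structure is isotrivial, and that $X$ has no continuous automorphisms (so in particular the VMRT-structure is not locally homogeneous, since local homogeneity would by Theorem~\ref{t.CF} force $X$ to be quasi-homogeneous and hence produce global automorphisms). First I would observe that for $G/P_k$ different from projective space with $\dim G/P_k \geq 3$, a general smooth hyperplane section $X$ has $\mathrm{Pic}(X) \cong \Z$ by the Lefschetz hyperplane theorem, since $\mathrm{Pic}(G/P_k) \cong \Z$; thus $X$ is a Fano manifold of Picard number $1$ to which Theorem~\ref{t.CF} applies.

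The core differential-geometric input is the isotriviality of the VMRT. The idea here is that $G/P_k$ is covered by its minimal rational curves (the lines in the embedding $G/P_k \subset \BP V^{\lambda_k}$), and its VMRT $\sC^0_y \subset \BP T_y(G/P_k)$ at a general point $y$ is a fixed homogeneous variety $Z_0$ — indeed for the two cases at hand the VMRT of $\Sp_n = \BS_n$ is $\Gr(2,n)$ and the VMRT of $\Lag(n,2n)$ is the second Veronese embedding $v_2(\BP^n)$. For a general hyperplane section $X \subset G/P_k$, the minimal rational curves of $X$ are precisely those lines of $G/P_k$ lying on $X$, and through a general point $x \in X$ the tangent directions to such lines cut out, inside $\sC^0_x$, the intersection with the hyperplane direction determined by $H$. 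Thus the VMRT $\sC_x \subset \BP T_x(X)$ is a hyperplane section of $Z_0$, and I would argue that as $x$ varies over a general point the isomorphism type of this hyperplane section is constant, giving isotriviality of type $Z \subset \BP V$ with $Z$ a general hyperplane section of $Z_0$. The claim that $Z$ is a general, hence smooth, hyperplane section (so that the preceding machinery on smooth $Z$ applies) should follow from a genericity argument on $H$ together with the homogeneity of $G/P_k$.

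The non-existence of continuous automorphisms is where Lemma~\ref{l.stab} and Proposition~\ref{p.rigid} enter decisively. By Lemma~\ref{l.stab}(i), $H^0(X, T_X)$ is the stabilizer in $G$ of the point $[H] \in \BP(V^{\lambda_k})^\vee$, and by Proposition~\ref{p.dim} (applicable once $\dim V^{\lambda_k} > \dim G$, which holds for $\BS_n$ with $n > 8$ and for $\Lag(n,2n)$ with $n > 4$) the generic stabilizer has dimension $\dim G - m_G(V^{\lambda_k}) = 0$. Hence $H^0(X, T_X) = 0$ for general $H$, so $X$ has no continuous automorphisms, and a fortiori its VMRT-structure cannot be locally homogeneous. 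The numerical thresholds $n > 8$ and $n > 4$ are exactly the ranges in which $\dim V^{\lambda_k}$ exceeds $\dim G$, while the complementary small cases $n \leq 8$, $n \leq 4$ are the locally rigid ones appearing in the list of Proposition~\ref{p.rigid}, which is consistent with these ranges being the correct cutoffs.

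The main obstacle I expect is the precise identification of the VMRT of the hyperplane section and the proof that this family is genuinely isotrivial rather than merely having fibers that are all hyperplane sections of $Z_0$. One must check that the variation of the hyperplane direction in $\BP T_x(X)$ as $x$ moves is compensated by a linear change of coordinates, so that all fibers $\sC_x$ are projectively equivalent; this uses that the group of linear automorphisms of $Z_0$ acts transitively enough on the relevant hyperplanes, a point that is special to the two symmetric spaces chosen and would need to be verified case by case for $\Gr(2,n)$ and $v_2(\BP^n)$. A secondary technical point is ensuring that $X$ is covered by minimal rational curves of the expected degree (so that its VMRT is the one computed from the lines on $G/P_k$ and not some other family), which follows from standard deformation theory of rational curves on Fano manifolds together with the fact that lines on $G/P_k$ deform to lines on the general section.
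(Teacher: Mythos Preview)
Your overall architecture is correct and matches the paper's: Picard number $1$ via Lefschetz, no continuous automorphisms via Lemma~\ref{l.stab}(i) and Proposition~\ref{p.dim} (with the thresholds $n>8$ and $n>4$ exactly the ranges where $\dim V^{\lambda_k} > \dim G$), and then Theorem~\ref{t.CF} to conclude the VMRT-structure is not locally homogeneous.

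The one substantive difference is your isotriviality argument. You propose to show directly that $\Aut(Z_0)$ acts transitively on the relevant hyperplanes of $Z_0 = \Gr(2,n)$ or $v_2(\BP^n)$, so that all fibers $\sC_x$ are projectively equivalent. The paper instead short-circuits this by invoking rigidity of the fiber $Z$: for case (i), $Z$ is a general hyperplane section of $\Gr(2,n)$, which is locally rigid by Proposition~\ref{p.rigid}; for case (ii), a hyperplane section of $v_2(\BP^n)$ is $v_2$ of a smooth quadric, hence rigid. Since a smooth family whose fiber has $H^1(Z,T_Z)=0$ is locally constant, isotriviality follows without any orbit analysis. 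Your transitivity approach would also succeed in these two cases (hyperplanes of $\Gr(2,n)$ are skew forms with a dense $\mathrm{PGL}_n$-orbit of maximal rank; hyperplanes of $v_2(\BP^n)$ are quadrics in $\BP^n$ with a dense $\mathrm{PGL}_{n+1}$-orbit of smooth ones), so there is no gap, but the paper's route is cleaner because it reuses Proposition~\ref{p.rigid} rather than introducing a separate case-by-case computation. You mention Proposition~\ref{p.rigid} only to explain the cutoff for $X$; the paper's point is that it applies equally well to the VMRT fiber $Z$.
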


\begin{proof} From Lemma \ref{l.stab} and Proposition \ref{p.dim}, we see that the listed varieties do not have continuous automorphism groups. We can see that their VMRT at a general point is as described above from p.466 of \cite{FH}. For (i), the VMRT at a general point is rigid from Proposition \ref{p.rigid}. For (ii), the VMRT at a general point is the second Veronese embedding of a quadric hypersurface. So it is rigid.  Thus the VMRT-structure is isotrivial in both cases. Since the VMRT at a general point is nonsingular and irreducible, Theorem \ref{t.CF} implies that it is not locally homogeneous. \end{proof}

\bigskip
Baohua Fu

Institute of Mathematics, AMSS, Chinese Academy of Sciences,

55 ZhongGuanCun East Road, Beijing, 100190, China
and  School of Mathematical Sciences, University of Chinese Academy of Sciences, Beijing, China

 bhfu@math.ac.cn

\bigskip
Jun-Muk Hwang

 Korea Institute for Advanced Study, Hoegiro 85,

Seoul, 02455, Korea

jmhwang@kias.re.kr

\end{document}